\providecommand{\U}[1]{\protect\rule{.1in}{.1in}}
\renewcommand{\P}{{\mathbb P}}
\def\theenumi{\arabic{enumi}}
\def\theenumii{\alph{enumii}}
\def\p@enumii{\theenumi.}
\def\theenumiii{\arabic{enumiii}}
\def\p@enumiii{(\theenumi)(\theenumii)}
\def\p@enumiv{\p@enumiii.\theenumiii}
\theoremstyle{plain}
\newtheorem{theorem}{Theorem}[section]
\newtheorem{lemma}[theorem]{Lemma}
\newtheorem{proposition}[theorem]{Proposition}
\newtheorem{corollary}[theorem]{Corollary}
\newtheorem{conjecture}[theorem]{Conjecture}
\numberwithin{equation}{section}
\theoremstyle{definition}
\newtheorem{definition}[theorem]{Definition}
\newtheorem{definitions}[theorem]{Definitions}
\newtheorem{remark}[theorem]{Remark}
\newtheorem{thmab}{Theorem}
\newtheorem{conjab}[thmab]{Conjecture}
\DeclareMathOperator{\FI}{FI}
\DeclareMathOperator{\Hom}{Hom}
\DeclareMathOperator{\Norm}{Norm}
\DeclareMathOperator{\Var}{Var}
\DeclareMathOperator{\Cov}{Cov}
\newcommand{\bvec}[1]{{\boldsymbol #1}}
\newcommand{\G}{{\mathcal G}}
\newcommand{\Sn}{\mathfrak{S}}
\newcommand{\Z}{{\mathbb{Z}}}
\newcommand{\R}{\mathbb{R}}
\newcommand{\E}{\mathcal{E}}
\newcommand{\F}{\mathcal{F}}
\newcommand{\X}{\mathcal{X}}
\newcommand{\arXiv}[1]{\href{http://arxiv.org/abs/#1}{\nolinkurl{arXiv:#1}}}
\newcommand{\arXivV}[2]{\href{http://arxiv.org/abs/#1}{\nolinkurl{arXiv:#1v#2}}}
\title[Random braiding]{A model for random braiding in graph configuration spaces}
\author[D.A.~Levin]{David A.\ Levin}
\address{University of Oregon Department of Mathematics, Fenton Hall, Eugene, OR 97405}
\email{dlevin@uoregon.edu}
\author[E.~Ramos]{Eric Ramos}
\address{University of Oregon Department of Mathematics, Fenton Hall, Eugene, OR 97405}
\email{eramos@uoregon.edu}
\author[B.~Young]{Benjamin Young}
\address{University of Oregon Department of Mathematics, Fenton Hall, Eugene, OR 97405}
\email{bjy@uoregon.edu}
\thanks{The second author was supported by NSF grant DMS-1704811.}
\begin{document}

\begin{abstract}
We define and study a model of winding for non-colliding particles in
finite trees.  We prove that the asymptotic behavior of this statistic
satisfies a central limiting theorem, analogous to similar results on
winding of bounded particles in the plane \cite{WT}.   We also
propose certain natural open questions and conjectures, whose
confirmation would provide new insights on configuration spaces of
trees. 
\end{abstract}

\keywords{graph configuration spaces, Markov chains, random braiding}

\maketitle

\section{Introduction}

The winding of Brownian particles in the plane is well-studied; 
See \ocites{Sp,Dur,PY,LM,MS} for a few early references.
Two independent Brownian motions in $\R^2$ will not collide almost
surely, so the winding of the particles around each other is a
well-defined process.  In particular, the \emph{difference} between
two processes is a single Brownian particle in the plane, which almost
surely does not hit points.   Let $W(t)$ be the total expended angle traversed around the origin
until time $t$. 
\ocite{Sp} proved that  $W(t)/\log(t)$ converges
in distribution, as $t \to \infty$,
to a Cauchy distribution. Variations of this winding statistic have
been studied:  the discrete version on the lattice \cite{Bel}, allowing 
a repulsive force to the origin, or even changing the plane to some
other surface \cites{LM,MS,Wata}.   In all cases,
one can define a similar winding statistic and investigate the large $t$
scaling limit.  These studies motivated us to investigate the winding
of two diffusive particles in a one-dimensional topological space.

As noted, a pair of planar Brownian particles will collide with probability zero. In particular, running two
independent Brownian motions up to some fixed time $t$ produces a
trajectory in the \emph{configuration space} of the plane.
Given a topological space $X$, the $k$-stranded configuration space is
the topological space
$\F_k(X) := \{(x_1,\ldots,x_n) \in X^n \mid x_i \neq x_j\}/\Sn_n$,
where the action of symmetric group $\Sn_n$ is given by permuting
coordinates. These spaces have a rich history in topology,
mathematical physics, and many other fields. (See \ocite{knudson} for a survey of such results.)
Thus the winding $W(t)$ may be interpreted as a measuring the
quantity of  homology accumulated by the two particle path in
configuration space up to time $t$.
We adopt this interpretation of winding here.

The $k$-particle exclusion process on a graph $G$ is the Markov chain
with state space $\Omega = \{0,1\}^{V(G)}$; a configuration $\omega$ with
$\omega(v)$ = 1 for exactly $v \in S$ can be visualized by placing
a particle on each $v \in S$ (making such vertices \emph{occupied})
and leaving each $v \not\in S$ \emph{unoccupied}.   The chain moves by
selecting an edge at random, and swapping the status
(occupied/unoccupied) of the two endpoints.     This process is
well-studied from multiple points-of-views, e.g.\ 
hydrodynamic limits (see, for a sampling of early work,
\ocites{GPV,KOV,Rez}, and also the book \ocite{KL}),
and mixing time (\ocites{Wil, Morris, Lacoin}).  See also the book
\ocite{Lig1} for
references and other aspects.    The $k$-stranded configuration space of the graph is the identical space;
thus it is natural to study the exclusion process from the topological
point of view, which has, to our knowledge, not been previously
explored.  The purpose of this paper is to describe one aspect of
the topological evolution of the $2$-particle exclusion process,
namely the \emph{accumulative homology}, derive some basic properties
and work out a few specific examples, and to suggest questions for
future study.

\subsection{Statement of main results and conjectures}

Let $G$ be a finite tree, and write $\rho(t)$ for the random path in
$\F_2(G)$ given as the realization of the exclusion process for $t$
steps. In what follows, we also fix a \emph{planar leaf-rooted
  structure} on $G$. That is, a choice of embedding of $G$ into the
plane, as well as a choice of leaf, henceforth called the \emph{root}
of $G$. Importantly, these choices induce a well-ordering on the vertices
of $G$ via a depth-first process originating from the root. We will
see in Section \ref{confBack} that this data further induces the
following: 

\begin{itemize}
  \item A basis for $H_1(\F_2(G)) \cong \Z^g$;
  \item a loop $\widehat{\rho}(t):S^1 \rightarrow \F_2(G)$ naturally
    associated to $\rho(t)$, called the \emph{closure of $\rho(t)$}. 
\end{itemize}

The \emph{winding} of the path $\rho(t)$ is the multivariate
statistic
$W(t) := \widehat{\rho}(t) \in H_1(\F_2(G)) \cong \Z^g$. Just as was
the case in the aforementioned classical settings, our main result
describes the asymptotic behavior of this statistic.  

\begin{thmab}
  Let $G$ be a planar leaf-rooted tree. There exists a $g \times g$
  matrix $\mathbf{\Sigma}$ such that 
  \[
    W(t)/\sqrt{t} \stackrel{\mathcal{D}}
      \rightarrow \Norm(\mathbf{\Sigma},0),
  \]
  where $\Norm(0,\mathbf{\Sigma})$ is the multivariate normal
  distribution with mean 0, and covariance matrix $\mathbf{\Sigma}$.
\end{thmab}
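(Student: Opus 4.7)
The plan is to express $W(t)$ as a sum of per-step winding increments plus a bounded closure correction, and then invoke a central limit theorem for additive functionals of a finite, reversible Markov chain.

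First, I would set up the combinatorial framework. The exclusion chain $(\omega_i)_{i \geq 0}$ has finite state space $\Omega = \{S \subseteq V(G) : |S| = 2\}$, which is the vertex set of the $1$-skeleton of $\F_2(G)$, and its transitions are exactly the oriented edges of that $1$-skeleton. The planar leaf-rooted structure (via the construction of Section~\ref{confBack}) supplies a spanning tree $T$ of this $1$-skeleton together with a $\Z$-basis of $H_1(\F_2(G)) \cong \Z^g$ indexed by the non-tree edges. Define $f : \Omega \times \Omega \to \Z^g$ by letting $f(S,S')$ be the homology class of the loop obtained by traversing the oriented transition edge from $S$ to $S'$ and then closing up through $T$; thus $f(S,S')$ is a basis vector, its negative, or zero, and automatically $f(S',S) = -f(S,S')$. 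Letting $C(\omega_0,\omega_t)$ denote the discrepancy between $T$-closure of the open path $\rho(t)$ and the canonical closure $\widehat{\rho}(t)$, one obtains the decomposition
\[
W(t) \;=\; \sum_{i=1}^{t} f(\omega_{i-1}, \omega_i) \;+\; C(\omega_0, \omega_t),
\]
where $C$ depends only on the endpoint states and hence is uniformly bounded in $t$.

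Next, I would verify the hypotheses of the multivariate Markov CLT. The exclusion chain on the connected graph $G$ is irreducible on $\Omega$ and reversible with respect to its uniform stationary measure $\pi$. Under the stationary chain, $(\omega_0,\omega_1)$ has the same law as $(\omega_1,\omega_0)$, so the antisymmetry of $f$ yields
\[
E_\pi\bigl[f(\omega_0, \omega_1)\bigr] \;=\; 0.
\]
Since $f$ is bounded, the Kipnis--Varadhan martingale approximation applied to the pair chain $(\omega_{i-1},\omega_i)_{i\geq 1}$ gives
\[
\frac{1}{\sqrt{t}} \sum_{i=1}^{t} f(\omega_{i-1}, \omega_i) \;\stackrel{\mathcal{D}}{\longrightarrow}\; \Norm(0, \mathbf{\Sigma}),
\]
with $\mathbf{\Sigma}$ determined by the Green--Kubo series
\[
\mathbf{\Sigma} \;=\; \sum_{k \in \Z} \Cov_\pi\bigl(f(\omega_0, \omega_1),\; f(\omega_k, \omega_{k+1})\bigr).
\]
Because $C(\omega_0,\omega_t)$ is bounded, the term $C(\omega_0,\omega_t)/\sqrt{t}$ vanishes in probability, and the claimed convergence of $W(t)/\sqrt{t}$ follows by Slutsky's theorem.

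The main obstacle I anticipate is the topological bookkeeping in the first step. One must specify explicitly, from the planar leaf-rooted structure, both the spanning tree $T$ of the $1$-skeleton of $\F_2(G)$ and the resulting basis of $H_1(\F_2(G))$, and then verify carefully that the canonical closure $\widehat{\rho}(t)$ differs from the $T$-closure of $\rho(t)$ only by a homology class determined by $(\omega_0,\omega_t)$. Once this decomposition is in place, the asymptotic statement is a routine application of a classical reversible-chain CLT, and no nondegeneracy claim on $\mathbf{\Sigma}$ needs to be addressed.
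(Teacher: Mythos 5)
Your proposal follows essentially the same route as the paper: the paper also rewrites $W(t)$ as an additive functional of the pair chain $Y_t=(X_t,X_{t+1})$ (its ``edge-walk''), uses $\pi_E(x,y)=\pi_E(y,x)$ to get mean zero, and applies a finite-state Markov CLT together with the Cramer--Wold device, with the Farley--Sabalka spanning tree of collapsible $1$-cells playing the role of your $T$ and Theorem \ref{howCellsWork} supplying the projection onto the critical-cell basis. Two small corrections: first, the paper's canonical closure $\widehat{\rho}(t)$ \emph{is} the $T$-closure, so your correction term $C(\omega_0,\omega_t)$ is identically zero rather than merely bounded (harmless either way). Second, you invoke the Kipnis--Varadhan martingale approximation for the pair chain, but that chain is \emph{not} reversible --- the paper notes this explicitly after Proposition \ref{stationaryFormula}; since the state space is finite and the chain is irreducible and aperiodic, you should instead cite a general ergodic Markov-chain CLT (the paper uses Bolthausen's theorem, Theorem \ref{MarkovCLT}), after which your argument goes through unchanged.
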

  
We will see throughout this work that the matrix $\mathbf{\Sigma}$ can
be explicitly computed in terms of a quadratic form built from a
\emph{discrete Green's function} associated to the exclusion process
(Theorem \ref{SpectralBound}). We will also see that this description
lends itself to natural spectral bounds on the entries of
$\mathbf{\Sigma}$. Such bounds are useful because, as conjectured by
D.\ Aldous and proven by \ocite{CLR}, the spectral gap for
the exclusion process is the same as the spectral gap for a delayed
random walker on the graph.

Our interest in $W(t)$ goes beyond simple curiosity as well. It is a
fact that the homotopy type of the space $\F_2(G)$ is determined by
combinatorial data that is far weaker than even the degree sequence
of $G$. In particular, no topological invariants of $\F_2(G)$ are
capable of recovering the tree $G$.  Nonetheless, the determination
of the winding statistic and its interaction with the topology of
$\F_2(G)$ lead us to conjecture the following:

\begin{conjab}\label{mainconjab}
  Let $G$ and $G'$ be two planar leaf-rooted trees that do not have
  vertices of degree 2, and write $\mathbf{\Sigma}_G$ and
  $\mathbf{\Sigma}_{G'}$ for the covariance matrices associated to the
  winding of the exclusion process on $G$ and $G'$, respectively. If
  $\mathbf{\Sigma}_G = \mathbf{\Sigma}_{G'}$, then $G$ and $G'$ are
  isomorphic as trees. 
\end{conjab}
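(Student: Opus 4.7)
The plan is to argue that $\mathbf{\Sigma}_G$ encodes both enough combinatorial information about the topology of $\F_2(G)$ and enough dynamical information about the exclusion process on $G$ to reconstruct the tree. The reconstruction would proceed in two stages.

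First, extract the branching data of $G$ from the matrix itself. Because $\mathbf{\Sigma}_G$ is $g \times g$ with $g = \rank H_1(\F_2(G))$, and because under the combinatorial model of Abrams and \'Swiatkowski the natural basis of $H_1(\F_2(G))$ is indexed by local data at the essential (degree $\geq 3$) vertices of $G$, the dimension of $\mathbf{\Sigma}_G$ already constrains the number and degrees of branch points. I would then attempt to show that the diagonal of $\mathbf{\Sigma}_G$, suitably normalized, separates into block-like contributions indexed by the essential vertices, so that the number of branch points of each degree can be read off from the matrix alone.

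Second, recover the pairwise graph-theoretic distances between essential vertices from the off-diagonal structure of $\mathbf{\Sigma}_G$. Here the explicit description of $\mathbf{\Sigma}$ as a quadratic form built from the discrete Green's function of the exclusion process (Theorem \ref{SpectralBound}) is the key input: off-diagonal entries should capture correlations between braiding events at different branch points, and these correlations should decay in a controlled way with graph-theoretic distance. Since any tree without degree-$2$ vertices is uniquely determined up to isomorphism by its branch points, their degrees, and the lengths of the subdivided edges joining pairs of branch points, combining the data from both stages would reconstruct $G$.

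The principal obstacle is the second stage: the Green's function for the two-particle exclusion process is substantially more complex than that of a single random walker, and admits no closed form on a generic tree. A promising simplification is to compare entries of $\mathbf{\Sigma}_G$ with the Green's function of the single-particle delayed random walk on $G$, invoking the Aldous--Caputo--Liggett--Richthammer equality of spectral gaps to control the comparison. This would reduce the recovery of metric data to an inverse spectral problem for the single-particle walk on a tree, a question that is more tractable but itself nontrivial. As an auxiliary route, I would first verify the conjecture directly on families of trees where the Green's function is computable in closed form (stars, caterpillars, spiders), and look for a combinatorial pattern that can be globalized.
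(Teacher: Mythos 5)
The statement you are asked to prove is Conjecture~A, which the paper itself does not prove: the authors explicitly defer a proof (even in the unitrivalent case) to future work and offer only evidence, namely the explicit computation for star graphs, the observation that $\mathbf{\Sigma}_{G_1}$ is diagonal for the more symmetric tree in Figure~\ref{samedegseq}, and a numerical indication that $\mathbf{\Sigma}_{G_2}(3,4)\neq 0$ for the other. Your outline is broadly consistent with the heuristic the authors give at the end of Section~\ref{sec:Wind} (diagonal entries reflect local star-move data at essential vertices; off-diagonal entries reflect correlations between star moves at different branch points, driven by imbalances in the tree), but it is a research program, not a proof, and both of its stages contain unestablished claims.

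Concretely: in your first stage, the assertion that the diagonal of $\mathbf{\Sigma}_G$ ``separates into block-like contributions indexed by the essential vertices'' is exactly what would need to be proved, and nothing in Theorem~\ref{SpectralBound} or Theorem~\ref{windingbound} yields it --- those results give only two-sided spectral bounds on $\mathbf{\Sigma}(i,i)$ in terms of $\pi_E$, the spectral gap, and $\gamma_{\min}$, and the dimension $g$ alone cannot pin down the multiset of branch degrees since $\mathrm{rank}\,H_1(\F_2(G))$ is determined by data weaker than the degree sequence. In your second stage, the claim that off-diagonal entries ``decay in a controlled way with graph-theoretic distance'' is likewise unproven; the closed form $\mathbf{\Sigma}(i,j)=2\langle (I-P)^{-1}f_i,f_j\rangle$ from Theorem~\ref{offdiagonalbounds} involves the Green's function of the two-particle exclusion process on $\binom{G}{2}$, not of a single walker on $G$, and the Aldous--Caputo--Liggett--Richthammer theorem transfers only the spectral gap, not the Green's function or the off-diagonal correlation structure, so it cannot ``control the comparison'' you propose. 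You acknowledge this obstacle yourself, which is honest, but it means the argument does not close. The statement should be treated as an open conjecture; what can legitimately be written down is the partial evidence the paper records, not a proof.
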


In fact, we wish to know even if it possible to
recover the degree sequence of $G$ from its
associated covariance matrix.  It is the intention of the authors to
prove Conjecture \ref{mainconjab}, at least in the case of unitrivalent trees, in
future work. In this work, we provide evidence of Conjecture
\ref{mainconjab} through worked  examples as well as numerical
experiments (Section \ref{windEx}).  

The rest of the paper is organized as follows:  In Section
\ref{sec:BG} we provide background on configuration spaces and
Markov chains.  In Section \ref{sec:AH} we define the accumulated
homolgy of a random walk on a graph, provide some general facts and
work out specific cases.  In Section \ref{sec:Wind} we discuss the
winding of Random walks in $\F_n(G)$ and give some examples.

\section{Background} \label{sec:BG}

\subsection{Graph configuration spaces}\label{confBack}

In this section we outline some basic theory from the theory of graph configuration spaces. Because of the overall context of this work, we only treat the case of trees. One can find more general treatments of this material in \ocites{A,ADK,FS,R}.

\begin{definitions} \label{defn:gcs}
A \emph{graph} is a connected and finite one-dimensional CW
complex. We assume that all graphs are simple, i.e., they do not
contain self-loops or multi-edges. For a graph $G$, we write $V(G)$
for the set of its \emph{vertices}, or zero-cells, and
$E(G)$ for the set of its \emph{edges}, or one-cells. The
\emph{degree} $\deg(x)$ of a vertex $x$ of $G$ is defined to be
the number of edges containing it as an endpoint. The
\emph{boundary} of a cell $\sigma$ of $G$ is defined as 
\[
  \partial(\sigma) =
  \begin{cases}
    \{\sigma\} & \text{ if $\sigma$ is a vertex of $G$}\\
    \{\text{the endpoints of $\sigma$}\} & \text{ otherwise.}
  \end{cases}
\]
A \emph{tree} is a contractible graph.

Given a graph $G$, the \emph{$n$-stranded configuration space of $G$}
is the quotient topological space
\[
  \F_n(G) := \{(x_1,\ldots,x_n) \in G^n \mid x_i \neq x_j
  \text{ whenever $i \neq j$}\}/\Sn_n,
\]
where the symmetric group $\Sn_n$ acts by permuting indices.
\end{definitions}

\begin{remark}
  It is common in the literature to refer to $\F_n(G)$ as the
  \emph{unordered} $n$-stranded configuration space of $G$ to stress
  that the space is modulo the action of the symmetric group. In this
  work we will never deal with the space where this quotient is not
  performed, i.e., the \emph{ordered} configuration space.
\end{remark}

Much of the early literature on graph configuration spaces focused
on constructing combinatorial models for these spaces which could ease
computation. (See, for example, \ocites{A,G,Sw}.) In this work, we
will extensively use the model of Abrams. 

\begin{definition}
  For a graph $G$ and $n \geq 0$, the quotient of the product space
  $G^n/\Sn_n$ inherits an obvious cellular structure. Specifically, the
  cells of $G^n/\Sn$ are expressible as 
  \[
    \{\sigma_1 , \ldots ,\sigma_n\} \,,
  \]
  where for each $i$, the cell $\sigma_i$ is either a vertex of $G$ or an
  edge. The \emph{discretized $n$-stranded configuration space of
    $G$} is the subcomplex of $G^n/\Sn_n$ generated by cells of the
  form 
  \[
     \{\sigma_1 , \ldots ,\sigma_n\}
  \]
  where $\partial(\sigma_i) \cap \partial(\sigma_j) = \emptyset$. We
  denote this space by $D\F_n(G)$ 
\end{definition}

One immediately observes that $D\F_n(G)$ embeds as a
subspace of $\F_n(G)$, making it natural to ask whether there exists a
deformation retract of $\F_n(G)$ onto $D\F_n(G)$. Generally speaking
this cannot be the case as, for instance, $D\F_n(G)$ is empty whenever
the number of vertices of $G$ is smaller than $n$. However, concerns of this form turn
out to be the only obstructions to the existence of the deformation
retract. 

\begin{theorem}[Abrams, Theorem A]\label{cellularModel}
  Let $G$ be a tree, and assume that there are at least $n$ vertices
  on the path connecting any two vertices of $G$ of degree not equal
  to two. Then the natural inclusion $D\F_n(G) \hookrightarrow
  \F_n(G)$ is a homotopy equivalence. 
\end{theorem}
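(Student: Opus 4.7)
The plan is to construct an explicit deformation retract of $\F_n(G)$ onto the subcomplex $D\F_n(G)$. A configuration $\{x_1,\ldots,x_n\}\in\F_n(G)$ lies in $D\F_n(G)$ precisely when the closed cells containing each $x_i$ have pairwise disjoint boundaries; equivalently, no two particles sit in cells sharing a vertex of $G$. Thus the obstruction to a configuration lying in $D\F_n(G)$ is strictly local: at some vertex $v\in V(G)$, several particles are clustered near $v$.

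First I would reduce the problem to separating particles near a single vertex $v\in V(G)$. Fix a small closed neighborhood $N(v)$ of $v$ in $G$; topologically this is a star with $\deg(v)$ arms. A configuration fails to lie in $D\F_n(G)$ at $v$ when two or more of its strands lie in $N(v)$ in a way that the closed cells they occupy share $v$. The hypothesis on $G$ -- at least $n$ vertices on any path between vertices of degree not equal to two -- guarantees that along each arm of $N(v)$ there is enough room so that any cluster of $k\le n$ strands can be combed out onto vertex-disjoint cells not incident to $v$. Concretely, I would slide the particles monotonically outward along their respective arms, with destinations chosen according to the fixed planar leaf-rooted ordering on $V(G)$ to break ambiguity between particles on a common arm.

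Next I would globalize this local construction using the rooted tree structure. Performing a single sweep from the leaves inward (equivalently, from the root outward), the vertex-by-vertex separations can be carried out in a prescribed order so that an earlier push is never undone by a later one. Because $G$ is a tree and the hypothesis ensures that adjacent essential vertices are far enough apart that a push at one cannot create a new collision at another, these local homotopies concatenate into a single continuous deformation $H\colon\F_n(G)\times[0,1]\to\F_n(G)$ satisfying $H_0=\mathrm{id}$, $H_1(\F_n(G))\subseteq D\F_n(G)$, and $H_t|_{D\F_n(G)}=\mathrm{id}$ for all $t$.

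The main obstacle I anticipate is ensuring that the resulting global deformation is genuinely continuous and consistent on the interfaces between local patches: two nearby configurations that differ by a particle crossing a vertex must be sent to compatible destinations. A cleaner way to organize the argument is via Forman's discrete Morse theory applied to the cube complex $G^n/\Sn_n$: one defines a gradient vector field pairing each cell outside $D\F_n(G)$ with a neighboring cell of one dimension higher or lower, so that the only critical cells are precisely those of $D\F_n(G)$. The tree hypothesis is exactly what is needed to show this pairing can be made globally acyclic, and Forman's theorem then delivers the homotopy equivalence without any explicit continuity check.
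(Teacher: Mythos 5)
This theorem is quoted from Abrams' thesis; the paper gives no proof of it, so your sketch can only be measured against Abrams' original argument. Your first half is in the right spirit --- Abrams does construct an explicit deformation retraction that combs clustered particles apart --- but the point you flag as "the main obstacle," namely making the local pushes globally continuous and mutually compatible (and making $D\F_n(G)$ genuinely fixed throughout), is precisely the entire technical content of Abrams' proof. Declaring the obstacle and then deferring to an alternative does not discharge it; note also that the planar leaf-rooted ordering you invoke to break ties is extra structure introduced later in the paper for the Farley--Sabalka collapse and plays no role in Abrams' theorem, which is a statement about $G$ alone.

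The proposed "cleaner way" via discrete Morse theory on $G^n/\Sn_n$ does not work, and this is a genuine error rather than a presentational choice. The space $\F_n(G)$ is the complement of the fat diagonal in $G^n/\Sn_n$: it is an \emph{open} subset, not a subcomplex, so the cells of $G^n/\Sn_n$ lying outside $D\F_n(G)$ are not cells of $\F_n(G)$, and no matching defined on $G^n/\Sn_n$ can produce a statement about $\F_n(G)$. Worse, if such a gradient field existed with critical cells exactly those of $D\F_n(G)$, Forman's theorem would identify the homotopy type of $G^n/\Sn_n$ --- which is contractible when $G$ is a tree --- with a complex built on the cells of $D\F_n(G)$, whereas $D\F_n(G)$ typically has nontrivial $H_1$; so the conclusion you want cannot come out of that machine. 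Discrete Morse theory enters this circle of ideas only \emph{after} Abrams' theorem, in Farley--Sabalka's collapse of $D\F_n(G)$ onto the complex $\mathcal{M}$ of critical cells (Theorem \ref{homologyBasis}); it is not a substitute for the continuity argument establishing $D\F_n(G)\hookrightarrow\F_n(G)$ as a homotopy equivalence.
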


\begin{remark}
  Abrams' original theorem proves a similar result without the
  assumption that $G$ is a tree. In this work we will only need the
  above. 
\end{remark}

One should observe that while $D\F_n(G)$ is influenced by subdividing
edges of $G$, the same is not true about $\F_n(G)$. Therefore, the
primary assumption of Theorem \ref{cellularModel} about the lengths of
paths in $G$ is not particularly detracting. One should also observe
that this path length assumption will be satisfied for every tree when
$n = 2$. 

While Abrams' cellular model is the first step in simplifying homology
computations, it is unfortunately saddled with an over abundance of
cells of every dimension, making it difficult to do any kind of
computation when $n$ or $G$ is large. To simplify matters even
further, we must apply techniques from the so-called discrete Morse
theory of Forman \ycites{Fo,Fo2}. The following definitions were first
given in \ocite{FS}. 

\begin{definition}\label{criticalCells}
  Let $G$ be a tree, and assume that $G$ satisfies the path-length
  condition of Theorem \ref{cellularModel}. $1$-cells of $D\F_n(G)$
  can be encoded as sets 
  \[
    \{\sigma_1,\ldots,\sigma_n\} \,,
  \]
  where $\sigma_i$ is a cell of $G$, and there is precisely one index
  $i$ for which $\sigma_i$ is an edge of $G$.

  Fix a choice of embedding of $G$ into the plane, as well as a choice
  of vertex of degree one. We call this chosen vertex the \emph{root}
  of $G$. These two choices induce a well-ordering on the vertices of
  $G$ via a depth-first progression from the root. Given an edge
  $\sigma$ of $G$, we write $\tau(\sigma)$ to denote the endpoint of
  $\sigma$ with smaller label, and $\iota(\sigma)$ to denote the
  endpoint of $\sigma$ with larger index. If $\sigma$ is a vertex of
  $G$, which is not the root, we write $e(\sigma)$ for the unique edge
  for which $\iota(e(\sigma)) = \sigma$. 
  
   \emph{In the sequel, our discussion of the 1-cells of $D\F_n(G)$, we
    will always assume that $\sigma_1$ is an edge of $G$. We also
    assume that the above 1-cell is oriented from
    $\{\tau(\sigma_1),\ldots,\sigma_n\}$ to
    $\{\iota(\sigma_1),\ldots,\sigma_n\}$}. 
  
  Given a $1$-cell of $D\F_n(G)$, $c := \{\sigma_1,\ldots,\sigma_n\}$,
  we say that a vertex $\sigma_i \in c$ is \emph{blocked in $c$} if
  either $\sigma_i$ is the root of $G$, or
  $\{\sigma_1,\ldots,\sigma_{i-1},e(\sigma_i),\sigma_{i+1},\ldots,\sigma_n\}$
  is \emph{not} a two cell of $D\F_n(G)$. We also say that the edge
  $\sigma_1$ is \emph{order respecting} if for all vertices $\sigma_i
  \in c$ such that $\tau(e(\sigma_i)) = \tau(\sigma_1)$, 
 the label of $\iota(\sigma_1)$ is smaller than that of $\sigma_i$. We
  say that the $1$-cell $c$ is \emph{critical} if $\sigma_1$ is
  \emph{not} order respecting, and all vertices of $c$ are blocked. We
  instead say that the $1$-cell $c$ is \emph{collapsible} if
  $\sigma_1$ is order respecting and every unblocked vertex has a label
  which is strictly bigger than $\iota(\sigma_1)$.
\end{definition}

When $G$ is the $H$-shaped graph with a single
subdivision on each edge, examples of critical and collapsible 
$1$-cells of $D\F_3(G)$ are given in Figure \ref{critCellExample}. Note
that implicit in this image is our choice of root vertex, which we
have labeled by 0. 

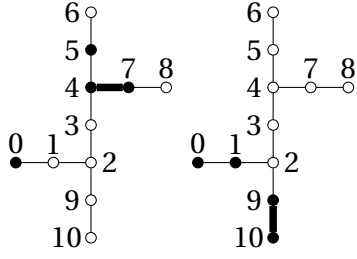
\begin{figure}
\begin{tikzpicture}
\tikzstyle{every node}=[draw,circle,fill=white,minimum size=4pt,
                            inner sep=0pt]
    \draw (0,0) node (0) [fill = black, label=above:$0$] {}
		      --(.5,0) node (1) [label=above:$1$]{}
					--(1,0) node (2) [label=right:$2$]{}
					--(1,.5) node (3) [label=left:$3$]{}
					--(1,1) node (4) [fill = black, label=left:$4$]{}
					--(1,1.5) node (5) [fill = black, label=left:$5$]{}
					--(1,2) node (6) [label=left:$6$]{}
    			(4) -- (1.5,1) node (7) [fill = black, label=above:$7$]{}
					--(2,1) node (8) [label=above:$8$]{}
					(2) -- (1,-.5) node (9) [label=left:$9$]{}
					--(1,-1) node (10) [label=left: $10$]{};
		 \draw [line width=1mm] (4) -- (7);
\end{tikzpicture}
\begin{tikzpicture}
\tikzstyle{every node}=[draw,circle,fill=white,minimum size=4pt,
                            inner sep=0pt]
    \draw (0,0) node (0) [fill = black, label=above:$0$] {}
		      --(.5,0) node (1) [fill = black, label=above:$1$]{}
					--(1,0) node (2) [label=right:$2$]{}
					--(1,.5) node (3) [label=left:$3$]{}
					--(1,1) node (4) [label=left:$4$]{}
					--(1,1.5) node (5) [label=left:$5$]{}
					--(1,2) node (6) [label=left:$6$]{}
    			(4) -- (1.5,1) node (7) [label=above:$7$]{}
					--(2,1) node (8) [label=above:$8$]{}
					(2) -- (1,-.5) node (9) [fill = black, label=left:$9$]{}
					--(1,-1) node (10) [fill = black, label=left: $10$]{};
		 \draw [line width=1mm] (9) -- (10);
\end{tikzpicture}
\caption{The critical $1$-cell $\{v_0,v_5,e_{4,7}\}$, and the
  collapsible $1$-cell $\{v_0,v_1,e_{9,10}\}$}\label{critCellExample} 
\end{figure}

The following theorem follows from work of Farley and Sabalka
\ycite{FS}) as well as follow-up work of Farley \ycite{Fa}).

\begin{theorem}[\ocites{FS,Fa}]\label{homologyBasis}
  If $G$ is a tree satisfying the path-length
  condition of Theorem \ref{cellularModel}, then
  \begin{enumerate}
  \item the subcomplex of $D\F_n(G)$ generated by the collapsible one
    cells forms a spanning tree of the one-skeleton of $D\F_n(G)$; 
  \item the complex $D\F_n(G)$ is homotopy equivalent to a cellular
    complex $\mathcal{M}$ whose unique $0$-cell corresponds to the
    unique $0$-cell of $D\F_n(G)$ whose every vertex is blocked, and
    whose $1$-cells correspond to the critical $1$-cells of
    $D\F_n(G)$. Moreover, the cellular differential 
    \[
      \Z\mathcal{M}^{(2)} \rightarrow \Z\mathcal{M}^{(1)}
    \]
    from the two cells of $\mathcal{M}$ to the $1$-cells is the zero
    map. In particular, the first homology group $H_1(\F_n(G))$ is
    free, with a basis in bijection with the critical $1$-cells of
    $D\F_n(G)$. 
\end{enumerate}
\end{theorem}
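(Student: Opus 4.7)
The plan is to apply Forman's discrete Morse theory to the cell complex $D\F_n(G)$, arranging matters so that the collection of critical cells is precisely that identified in Definition \ref{criticalCells}. Concretely, I would construct an acyclic partial matching on the face poset of $D\F_n(G)$ in which every non-critical cell is paired with a cell of adjacent dimension, and then invoke Forman's collapsing theorem: the resulting gradient flow exhibits $D\F_n(G)$ as a CW complex $\mathcal{M}$ with one cell of dimension $k$ for each critical $k$-cell, and with attaching maps recoverable from the gradient paths of the matching.

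To construct the matching I would use the root and planar embedding to rank $V(G)$ by depth-first traversal, and then on each cell $c=\{\sigma_1,\dots,\sigma_n\}$ identify the set of unblocked vertex components. Following Farley and Sabalka, I pair $c$ upwards with the cell $c'$ obtained by promoting the smallest unblocked $\sigma_i$ to its preceding edge $e(\sigma_i)$, whenever such a vertex exists. An unmatched $0$-cell is then the unique configuration in which every vertex is blocked; the unmatched $1$-cells are exactly the critical ones of Definition \ref{criticalCells}, since the failure of order-respecting precisely rules out that $\sigma_1$ arises as the promoted edge from a smaller unblocked vertex in the lower $0$-cell, while collapsibility records the image of the matching from dimension zero. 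The key verification is acyclicity: I would associate to each cell a numerical invariant built from the lexicographic vector of labels of its vertex components, and check that this invariant strictly decreases along every two-step ``up then down'' move of the modified Hasse diagram, which precludes any directed cycle and so produces a gradient matching in Forman's sense.

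Given the matching, Forman's theorem immediately yields the complex $\mathcal{M}$ with the stated cells, establishing part (2) at the level of cells. The unique $0$-cell of $\mathcal{M}$ arises because the fully blocked configuration in the depth-first ranking (all particles pushed as close to the root as possible) is unique. For part (1), each collapsible $1$-cell is paired with exactly one non-critical $0$-cell and vice versa, so the collapsible $1$-cells form a subgraph of the $1$-skeleton that spans every $0$-cell of $D\F_n(G)$; acyclicity of the matching forces this subgraph to be a forest, and the count (number of collapsible edges equals number of $0$-cells minus one, since exactly one $0$-cell is unmatched) together with connectedness of $D\F_n(G)$ upgrades it to a spanning tree.

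To conclude the vanishing of the cellular differential $\Z\mathcal{M}^{(2)}\to\Z\mathcal{M}^{(1)}$, I would follow Farley's approach: each critical $2$-cell of $D\F_n(G)$ corresponds to a commuting pair of edge-moves in $G$, and its boundary in $\mathcal{M}$ is computed by tracking gradient flows from its $1$-dimensional faces back to the unique critical $1$-cell in each gradient class. A local case analysis of the possible configurations of a critical $2$-cell (distinguished by whether the two edges share no vertex or meet at a common blocked vertex) would then show that each critical $1$-cell is reached an even number of times with canceling signs, so that the boundary vanishes identically. This last step is the main obstacle: it requires careful bookkeeping of signs along gradient paths through the depth-first ordering, and the cancellation is not formal but relies specifically on the tree structure of $G$ together with the order-respecting condition. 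Once established, the differential vanishes and the identification of a free $\Z$-basis of $H_1(\F_n(G))$ with the critical $1$-cells follows immediately.
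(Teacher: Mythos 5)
The paper offers no proof of this theorem: it is quoted directly from Farley--Sabalka \cite{FS} and Farley \cite{Fa}, and your sketch is a faithful outline of exactly the discrete Morse theory argument carried out in those references (the Farley--Sabalka acyclic matching for parts (1) and (2), and Farley's analysis of gradient paths from critical $2$-cells for the vanishing of the differential). Your proposal is therefore essentially the same approach as the paper's source, with the one honest caveat you already flag yourself, namely that the sign-cancellation in the Morse boundary map is the nontrivial content of \cite{Fa} and is only sketched, not verified, in your write-up.
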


\begin{remark}
  The homotopy equivalence of the prior theorem is actually what is
  known as a \emph{cellular collapse}. In particular, there is a
  formal sense in which one can say that the $1$-cells of $\mathcal{M}$
  are the same as the critical $1$-cells of $D\F_n(G)$. This will be
  used implicitly throughout the work. 
\end{remark}

The spanning tree of the one-skeleton of $D\F_n(G)$ given by the
collapsible $1$-cells has a nice description, which we now give in the
case $n = 2$. The general case is similar, though a bit more
cumbersome to state. 

Given two pairs of distinct vertices $\{v_1,w_1\}, \{v_2,w_2\}$, the
path connecting them in our spanning tree is described as
follows. Between $v_1$ and $w_1$, select the vertex with the smaller
label. Say that this vertex is $v_1$. Then one keeps $w_1$ fixed,
while allowing for $v_1$ to flow to the root. Once $v_1$ arrives at
the root, $w_1$ is then moved to the bigger of $v_2$ and
$w_2$. Finally, $v_1$ is moved from the root to the smaller of $v_2$
and $w_2$.

While Theorem \ref{homologyBasis} provides a complete description of
the first homology of $D\F_n(G)$, and therefore also $\F_n(G)$, for
our intended applications we need to be a bit more detailed in our
understanding on how the aforementioned homotopy equivalences collapse
the 2-cells of $D\F_n(G)$. We begin with the following definition. 

\begin{definition}
  Let $c = \{\sigma_1,\ldots,\sigma_n\}$ denote a critical 1-cell of
  $D\F_n(G)$. Then the edge $\sigma_1$ has its smaller endpoint
  $\tau(\sigma_1)$ on a vertex of $G$ of degree $\geq 3$. For this
  definition only, write $G'$ as the collection of connected
  components of $G - \tau(\sigma_1)$.\\ 

  Let $c' = \{\tau_1,\ldots,\tau_n\}$ be a 1-cell of $D\F_n(G)$ which
  is not collapsible.  We say $c'$ \emph{lies on top of $c$} if, 
  \begin{enumerate}
  \item the edge $\tau_1$ is equal to the edge $\sigma_1$ and,
  \item for every component in $G'$, the number of vertices $c$'
    contains in this component is the same as the number of vertices
    $c$ contains in this component. 
\end{enumerate}
\end{definition}

The following theorem can be proven using the second part of Theorem
\ref{homologyBasis}, as well as Proposition 2.2 and Lemma 5.1 of
\ocite{FS}. 

\begin{theorem}\label{howCellsWork}
  Let $G$ be a tree, and assume that $G$ satisfies the path-length
  condition of Theorem \ref{cellularModel}. With respect to the basis
  of $H_1(D\F_n(G)^{(1)})$ indexed by the non-collapsible 1-cells of
  $D\F_n(G)$, the composition 
  \[
    H_1(D\F_n(G)^{(1)})
    \rightarrow H_1(D\F_n(G)) \stackrel{\cong}{\rightarrow} H_1(\mathcal{M})
  \]
  is defined on points by
  \[
    (\alpha_c)_{c \textrm{ not collapsible}} \mapsto (\sum_{c \text{
        lies on top of $c'$}}\alpha_c)_{c' \text{ critical}} 
  \]
\end{theorem}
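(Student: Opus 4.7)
The plan is to deduce this from the discrete Morse theoretic equivalence of Theorem \ref{homologyBasis}, applied with the explicit gradient vector field on $D\F_n(G)$ of Farley--Sabalka. By part (2) of that theorem, $H_1(\mathcal{M})$ is free on the critical 1-cells, since the Morse differential from 2-cells vanishes. The Morse machinery then supplies a chain projection $\pi: C_\bullet(D\F_n(G)) \to C_\bullet(\mathcal{M})$ which is a quasi-isomorphism and which, at the level of 1-chains, is the identity on critical 1-cells, vanishes on collapsible 1-cells, and expands a redundant (non-collapsible and non-critical) 1-cell as an integer combination of critical 1-cells via the recursion $\pi(\sigma) = -\sum_{\tau} \pm \pi(\tau)$, where $\tau$ ranges over the faces of the matched 2-cell $\mu(\sigma)$ other than $\sigma$ itself.

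Since by Theorem \ref{homologyBasis}(1) the collapsible 1-cells form a spanning tree of $D\F_n(G)^{(1)}$, the group $H_1(D\F_n(G)^{(1)})$ has a basis given by the fundamental cycles $\gamma_c = c + T_c$ indexed by the non-collapsible 1-cells $c$, where the chord $T_c$ is a $\Z$-linear combination of collapsible 1-cells. Applying $\pi$ we obtain $\pi(\gamma_c) = \pi(c)$, so the theorem reduces to the chain-level identity
\[
  \pi(c) = \sum_{\substack{c' \text{ critical}\\ c \text{ lies on top of } c'}} c'
\]
in $C_1(\mathcal{M})$.

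To establish this identity I would analyze V-paths emanating from a redundant 1-cell $c = \{\sigma_1, v_2, \ldots, v_n\}$, using the explicit description of the Farley--Sabalka matching (Proposition 2.2 of \ocite{FS}) together with their V-path analysis (Lemma 5.1 of \ocite{FS}). The central combinatorial claim is that each V-path preserves two invariants of $c$: the moving edge $\sigma_1$ and the number of vertices in each connected component of $G - \tau(\sigma_1)$. Concretely, $c$ is matched with a 2-cell $\mu(c)$ obtained by replacing some unblocked vertex $v_i$, necessarily in the same component of $G - \tau(\sigma_1)$ as $\iota(\sigma_1)$, with the edge $e(v_i)$; among the three faces of $\mu(c)$ other than $c$, one retains $\sigma_1$ as its moving edge and preserves the componentwise vertex distribution (so the V-path either ends there if critical or continues), while the remaining two have moving edge $e(v_i)$ and can be verified to be collapsible in the sense of Definition \ref{criticalCells}, hence are killed by $\pi$. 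Iterating, the V-paths terminate exactly at those critical 1-cells $c'$ sharing the moving edge and componentwise vertex distribution of $c$, that is, exactly those $c'$ on which $c$ lies on top.

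The main obstacle will be verifying the two assertions that the ``wayward'' faces of each matched 2-cell are indeed collapsible, and that the signs in the recursive V-path formula conspire to leave every final coefficient equal to $+1$; both reduce to a careful case analysis on the order-respecting and blocked-vertex conditions of Definition \ref{criticalCells} at the branching vertex $\tau(\sigma_1)$, making essential use of the orientation convention fixed there. Once these are in hand, linearity in $(\alpha_c)$ immediately yields the stated expression for the composite $H_1(D\F_n(G)^{(1)}) \to H_1(\mathcal{M})$.
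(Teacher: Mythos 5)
Your proposal follows exactly the route the paper indicates: the paper offers no written proof of Theorem \ref{howCellsWork}, stating only that it follows from the second part of Theorem \ref{homologyBasis} together with Proposition 2.2 and Lemma 5.1 of Farley--Sabalka, and your argument is a (more detailed) unpacking of precisely those ingredients via the Morse chain projection and the V-path analysis. The combinatorial verifications you flag at the end --- that the wayward faces of each matched 2-cell are collapsible and that the signs resolve to $+1$ --- are exactly the content the paper outsources to the cited results of \ocite{FS}, so your outline is consistent with, and somewhat more explicit than, the paper's own treatment.
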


\subsection{Markov chains}

In this section, we give a brief overview of relevant topics related
to Markov chains. The example of random walks on graphs will be used
both as motivation and as a way to ground the material. All of what
follows can be found in any standard text on the subject. (See, for
example, \ocite{LPW}.)

In the following $(X_t)_{t = 0}^\infty$ will be a Markov chain on the
state space $\X$ with transition matrix $P$, so that, for all $t > 0$
and $x_0,\ldots,x_t \in \X$,
\[
  \P(X_t = x_t \mid X_0 = x_0, \ldots, X_{t-1} = x_t) = P(x_{t-1},x_t)
  \,.
\]
The chain is \text{irreducible} if for all $v,w \in \X$, there
exists $r = r(v,w)$ such that $P^r(v,w) > 0$.   If $P$ is irreducible,
then there exists a unique stationary distribution on $\X$, i.e.\
a probability row vector satisfying $\pi = \pi P$.

A \emph{reversible} Markov chain satisfies the \emph{detailed balance}
equations
\[
  \pi(x)P(x,y) = \pi(y)P(y,x) \quad \text{ for all } x,y \in {\mathcal
    X} \,.
\]
% \begin{definition}\label{graphdef}
% If $(X_t,P)$ is a connected Markov chain on a state space
% $\mathcal{X}$ with stationary distribution $\pi$, then we say
% $(X_t,P)$ is \emph{reversible} if for all $x,y \in \mathcal{X}$ 
% \[
% \pi(x)P(x,y) = \pi(y)P(y,x).
% \]
We say that $P$ is \emph{lazy} if for any state $x \in \X$, $P(x,x) \geq \frac{1}{2}$.

%Let $G$ be a connected graph. A Markov chain on $G$ will be \emph{consistent} with $%G$ if it has state space $V(G)$, and $P(x,y) = 0$ if and
%only if $\{x,y\} \notin E(G)$. 
%\end{definition}

\begin{remark}
  If $P$ is lazy, then all the eigenvalues of $P$ are non-negative.
  Assuming the eigenvalues of $P$ are non-negative allows for simple
  descriptions of certain spectral bounds. (For example, the proof of Theorem
  \ref{SpectralBound}.) 
\end{remark} 

A \emph{statistic} on the state space of $\X$ is a function $f:\X \to
\R$.   The expectation of a statistic $f$ with respect to $\pi$ will
be denoted
\[
  E_\pi(f) = \sum_{x \in \X} f(x) \pi(x) \,.
\]
and the variance is
\[
  \Var_\pi(f) = E_\pi[( f - E_\pi(f))^2] \,.
\]

Letting
\[
  \widehat{f}_n:= \sum_{t = 0}^n \frac{f(X_t)}{\sqrt{n}} \,,
\]
we record below the central limit theorem of $\hat{f}_n$:

\begin{theorem}[\cite{Bolt}*{Theorem 1}]\label{MarkovCLT}
  Let $P$ be a connected aperiodic Markov chain on a finite state
  space $\X$, and let $f$ be a statistic on $\X$. Then, 
  \[
    \widehat{f}_n - \sqrt{n}E_\pi(f)
    \stackrel{\mathcal{D}}{\rightarrow} N(0,\sigma^2),
  \]
  where $N(0,\sigma^2)$ is the centered normal distribution with
  variance $\sigma^2$, and 
  \begin{align}
    \sigma^2 = \Var_\pi(f) + 2\sum_{t \geq 1}\Cov_\pi(f(X_0),f(X_t)).
    \label{variance}
  \end{align}
  Moreover, writing $F_n(t)$ for the distribution function of
  $\widehat{f}_n - \sqrt{n} E_\pi(f)$, one has 
  \[
    \|F_n - N(0,\sigma^2)\|_\infty = O(n^{-\frac{1}{2}}).
  \]
\end{theorem}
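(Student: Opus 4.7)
The plan is to follow the classical martingale approximation approach of Gordin--Lifshitz, then appeal to a CLT and a Berry--Esseen bound for martingales. The starting point: since the chain is irreducible and aperiodic on a finite state space, the transition operator $P$ acts on the space of $\pi$-mean-zero functions with all eigenvalues strictly inside the unit disk. Consequently the Poisson equation $(I - P)g = f - E_\pi(f)$ is solvable; explicitly, one may take $g = \sum_{t \geq 0} P^t(f - E_\pi(f))$, and because $\X$ is finite, $g$ is automatically bounded.

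Given such a $g$, I would decompose each increment as
\[
  f(X_t) - E_\pi(f) = g(X_t) - Pg(X_t) = \bigl[g(X_t) - g(X_{t+1})\bigr] + \bigl[g(X_{t+1}) - E[g(X_{t+1}) \mid X_t]\bigr].
\]
Summed from $t = 0$ to $n-1$, the first bracket telescopes to $g(X_0) - g(X_n) = O(1)$ almost surely, while the second bracket yields a martingale difference sequence $D_{t+1}$ adapted to the natural filtration $\F_t = \sigma(X_0,\ldots,X_t)$, with $|D_t| \leq 2\|g\|_\infty$. Thus $\sum_{t=0}^{n-1}(f(X_t) - E_\pi(f)) = M_n + O(1)$ for $M_n := \sum_{t=1}^{n} D_t$. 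Applying the CLT for stationary ergodic martingale difference sequences with bounded increments then gives $M_n/\sqrt{n} \stackrel{\mathcal{D}}{\to} N(0,\sigma^2)$ with $\sigma^2 = E_\pi[D_1^2]$, and the bounded remainder contributes nothing to the scaling limit.

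To match \eqref{variance}, expand $E_\pi[D_1^2]$ using $Pg = g - (f - E_\pi(f))$, invoke $\pi$-stationarity, and unfold the resulting telescoping sum; after cancellation one obtains $\sigma^2 = \Var_\pi(f) + 2\sum_{t \geq 1}\Cov_\pi(f(X_0), f(X_t))$, with absolute convergence guaranteed by geometric decay of correlations on a finite irreducible aperiodic chain. For the Berry--Esseen statement $\|F_n - N(0,\sigma^2)\|_\infty = O(n^{-1/2})$, I would apply a martingale Berry--Esseen bound (Heyde--Brown, or the Stein's method argument used in Bolthausen's own proof) to $M_n$; boundedness of $D_t$ handles the third-moment input. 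The hard part will be controlling the fluctuation of the conditional variances $\sum_t E[D_{t+1}^2 \mid \F_t]$ around $n\sigma^2$ at rate $O(\sqrt{n})$, which is the standard technical obstacle in martingale Berry--Esseen results, but in our finite setting it reduces to exponential mixing of $P$ and follows from elementary spectral estimates. The telescoping remainder $g(X_0) - g(X_n)$ contributes a perturbation of size $O(n^{-1/2})$ after rescaling, which is absorbed in the claimed rate.
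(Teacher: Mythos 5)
This theorem is not proved in the paper at all: it is imported verbatim from the literature, with the remark following it explaining that the CLT half is classical (citing Jones's survey) and that the $O(n^{-1/2})$ rate is Bolthausen's contribution. So there is no in-paper argument to compare against, and the question is simply whether your sketch would actually deliver the stated result. The first half of your proposal is fine: the Poisson-equation/martingale decomposition of Gordin--Lifshitz is the standard route to the CLT for functionals of a finite irreducible aperiodic chain, your telescoping identity is correct, and your computation $E_\pi[D_1^2] = E_\pi[g^2] - E_\pi[(Pg)^2] = \Var_\pi(f) + 2\sum_{t\ge 1}\Cov_\pi(f(X_0),f(X_t))$ recovers \eqref{variance} exactly. (You implicitly start the chain from $\pi$; for a finite chain the initial distribution is immaterial, but you should say so.)

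The Berry--Esseen half has a genuine gap. Generic martingale Berry--Esseen bounds (Heyde--Brown, or Bolthausen's own martingale-rate paper) control the Kolmogorov distance by a quantity involving the fluctuation of the conditional variance $V_n^2 = \sum_t E[D_{t+1}^2\mid \F_t]$ around $n\sigma^2$, raised to a power strictly less than one. In your setting $E[D_{t+1}^2\mid\F_t] = h(X_t)$ for a fixed function $h$, so $V_n^2 - n\sigma^2$ is itself a centered additive functional of the chain and is genuinely of order $\sqrt{n}$ --- exponential mixing gives you nothing better. Feeding $\|V_n^2/n\sigma^2 - 1\| = O(n^{-1/2})$ into Heyde--Brown yields a rate no better than $n^{-1/4}$ (optimizing over the moment parameter), and even the sharpest martingale results give $n^{-1/2}\log n$ only when the conditional variances are essentially deterministic. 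So the step you label ``the standard technical obstacle \dots follows from elementary spectral estimates'' is precisely where the argument fails to reach $O(n^{-1/2})$. The rate in the theorem is obtained by different means: Bolthausen's proof for Harris recurrent chains runs through regeneration, splitting the trajectory at successive returns to a fixed state into i.i.d.\ blocks and applying an i.i.d.-type Berry--Esseen theorem to the block sums; for a finite chain one can alternatively use the Nagaev spectral perturbation method, expanding the leading eigenvalue of the tilted operator $P_\xi(x,y)=P(x,y)e^{i\xi f(y)}$ and invoking Esseen's smoothing inequality. Either of these would close the gap; the martingale approximation alone will not.
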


\begin{remark}
  The first half of the above theorem, which states
  a central limit theorem for Markov chains, is classical and can be
  found in a variety of textbooks and surveys;  see, for example
  \ocite{Jones}.   \ocite{Bolt} proves the second half of the
  above theorem, which implies that the convergence rate of this central
  limit theorem is on the order of $n^{-\frac{1}{2}}$. 
\end{remark}

The \emph{Cramer-Wold} device is a method of obtaining a multivariate
central limit theorem.  Suppose $\bvec{X}_t$ and $\bvec{W}$ are $d$-dimensional random
variables.  If
\[
  \theta \cdot \bvec{X}_t \Rightarrow \theta \cdot \bvec{W}
\]
for all $\theta$, then $\bvec{X}_t \Rightarrow \bvec{W}$.
This follows directly from the equivalence of weak convergence to
convergence of Fourier transforms.  See, for example, \ocite{Bill}. For our purposes, the Cramer-Wold device will be used to conclude certain multi-variate statistics converge to a normal distribution, by showing that every projection of the statistic does so.

%\emph{WRITE SOME EXPOSITION ABOUT MULTIVARIET CLT, WHY IT SUFFICES TO CHECK ALL INNER PRODUCTS}

\section{Accumulated homology of a random walk on a graph}
\label{sec:AH}

\subsection{The edge-walk}

In this section we outline a useful method for recording edge traversals during a random walk on some graph.\\

\emph{For the remainder of this section, we fix a graph $G$, as well as a model of a random walk on $G$, $(X_t,P)$. We will also assume that the Markov chain is lazy.}

\begin{definition}
  We write $\mathcal{E}(G)$ for the collection of \emph{directed edges
    of $G$ with respect to $X_t$}, 
  \[
    \mathcal{E}(G)
    = \{(x,y) \mid P(x,y) \neq 0\} \subseteq V(G) \times V(G).
  \]
  The associated \emph{edge-walk} of $(X_t,P)$ is the Markov chain
  $(Y_t,P_E)$ on the state space $\E(G)$ given by, 
  \[
    Y_t = (X_t,X_{t+1}), \hspace{1cm} P_E((x,y),(z,w)) =
    \begin{cases} P(z,w) &\text{ if $z = y$}\\
      0 &\text{ otherwise.}
    \end{cases}
  \]
\end{definition}

For much of this section, we will be proving a variety of elementary properties of the edge-walk associated to $(X_t,P)$. In particular, we discuss how certain properties of $(X_t,P)$ relate to those of $(Y_t,P_E)$.\\

We begin with a computation of the stationary distribution of $P_E$. Note that $(Y_t,P)$ is connected by our assumptions on $(X_t,P)$, and therefore has a unique stationary distribution.

\begin{proposition}\label{stationaryFormula}
Let $\pi$ denote the stationary distribution of $(X_t,P)$, and $\pi_E$ the stationary distribution of $(Y_t,P_E)$. Then, for any $(x,y) \in \E(G)$,
\[
\pi_E(x,y) = \pi(x)P(x,y)
\]
\end{proposition}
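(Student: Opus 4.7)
The plan is to exploit the uniqueness of the stationary distribution: since the edge-walk $(Y_t, P_E)$ is irreducible (inherited from the connectedness/irreducibility of $(X_t,P)$, with laziness ensuring every state of $\E(G)$ is reachable), it has a unique stationary distribution. So it suffices to verify that the proposed formula $\nu(x,y) := \pi(x) P(x,y)$ is (a) a probability distribution on $\E(G)$ and (b) a left-eigenvector of $P_E$ with eigenvalue $1$.

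For (a), I would simply sum over directed edges:
\[
  \sum_{(x,y) \in \E(G)} \nu(x,y) = \sum_{x \in V(G)} \pi(x) \sum_{y : P(x,y) \neq 0} P(x,y) = \sum_{x} \pi(x) = 1,
\]
using that the inner sum is just the total transition probability out of $x$, which equals $1$, and that $\pi$ is a probability distribution.

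For (b), I would fix $(z,w) \in \E(G)$ and compute
\[
  (\nu P_E)(z,w) = \sum_{(x,y) \in \E(G)} \nu(x,y) P_E((x,y),(z,w)).
\]
The defining formula for $P_E$ kills every term except those with $y = z$, and on those surviving terms $P_E((x,z),(z,w)) = P(z,w)$. Hence
\[
  (\nu P_E)(z,w) = P(z,w) \sum_{x : (x,z) \in \E(G)} \pi(x) P(x,z) = P(z,w)\, \pi(z) = \nu(z,w),
\]
where the middle equality is precisely the stationarity equation $\pi = \pi P$ evaluated at $z$.

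There is no real obstacle here — the proof is a two-line verification. The only point that deserves a brief comment is why $(Y_t, P_E)$ actually has a (unique) stationary distribution to begin with; this follows from the fact that $\E(G)$ is finite and that irreducibility of $P$, together with laziness, transfers to $P_E$ in the obvious way, so I would mention this explicitly before invoking uniqueness.
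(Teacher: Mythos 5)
Your proof is correct and complete: verifying that $\nu(x,y)=\pi(x)P(x,y)$ sums to $1$ and satisfies $\nu P_E=\nu$, then invoking uniqueness of the stationary distribution for the irreducible finite chain $(Y_t,P_E)$, is exactly the right argument, and your computation $(\nu P_E)(z,w)=P(z,w)\sum_x\pi(x)P(x,z)=P(z,w)\pi(z)$ is the whole content. It is worth noting that the paper itself omits the details and asserts that the proof is ``simply checking that the detailed balance equations hold''; taken literally for $P_E$ this cannot be the intended route, since the paper observes immediately afterwards that the edge walk is \emph{not} reversible, so detailed balance for $P_E$ fails in general. Your direct verification of stationarity (which uses only $\pi=\pi P$, not reversibility of $P$) is therefore not just an acceptable alternative but the argument that actually works; the only reversibility-flavored fact available here is the weaker symmetry $\pi_E(x,y)=\pi_E(y,x)$, which the paper derives as a corollary rather than using as the proof.
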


The proof is simply checking that the detailed balance equations hold;
we omit the details.
% \begin{proof}
%   Check that the detailed balance
% It suffices to show that $\pi_E$, as defined above, satisfies
% \[
% \pi_E \cdot P_E = \pi_E.
% \]
% Indeed, we have
% \begin{align*}
% (\pi_E \cdot P_E) (x,y) &= \sum_{(z,w) \in
%                           \E(G)}\pi_E(z,w)P_E((z,w),(x,y))\\
%   & = \sum_{\{z \,:\, (z,x) \in \E(G)\}} \pi(z)P(z,x)P(x,y)\\\\
%   & = P(x,y)\sum_{z} \pi(z)P(z,x) = \pi(x)P(x,y) = \pi_E(x,y)
% \end{align*}
% \end{proof}

It follows from Proposition \ref{stationaryFormula} that the edge walk
$(Y_t,P_E)$ is not reversible. Despite this, one immediate and
ultimately very useful consequence of this description of $\pi_E$ is
the following stand-in for reversibility. 

\begin{corollary}
  Let $\pi_E$ denote the stationary distribution of $(Y_t,P_E)$. Then,
  \begin{align}
    \pi_E(x,y) = \pi_E(y,x)\label{almostRev}
  \end{align}
\end{corollary}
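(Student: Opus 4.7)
The plan is a short deduction from Proposition \ref{stationaryFormula} combined with the reversibility of the underlying chain. First I would invoke Proposition \ref{stationaryFormula} to rewrite both sides: $\pi_E(x,y) = \pi(x)P(x,y)$ and $\pi_E(y,x) = \pi(y)P(y,x)$. Since $(X_t,P)$ is a random walk on a graph, it is reversible, so the detailed balance equations $\pi(x)P(x,y) = \pi(y)P(y,x)$ hold for every pair $x,y \in V(G)$. Equating the two expressions for $\pi_E$ using detailed balance yields $\pi_E(x,y) = \pi_E(y,x)$ immediately.

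There is essentially no obstacle here; the corollary is a direct restatement of the detailed balance equations for $(X_t,P)$ in terms of the edge-walk stationary distribution. The only conceptual point worth emphasizing is the reason the authors call \eqref{almostRev} a ``stand-in for reversibility'': although the edge walk $(Y_t, P_E)$ is genuinely not reversible (its transitions are forced, since the second coordinate of $Y_t$ must equal the first coordinate of $Y_{t+1}$), the stationary distribution $\pi_E$ nonetheless inherits a symmetry under edge-reversal from the reversibility of $(X_t,P)$, through the product formula of Proposition \ref{stationaryFormula}. This observation is what makes it useful later as a substitute for detailed balance in calculations involving $P_E$.
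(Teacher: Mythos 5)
Your argument is exactly the paper's: the authors likewise deduce \eqref{almostRev} immediately from Proposition \ref{stationaryFormula} together with the reversibility (detailed balance) of the underlying walk $(X_t,P)$. Your additional remark on why this serves as a ``stand-in for reversibility'' for the genuinely non-reversible edge walk is a correct and apt gloss on the surrounding discussion.
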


\begin{proof}
  Follows immediately from Proposition \ref{stationaryFormula} and the
  fact that $(X_t,P)$ is reversible. 
\end{proof}

\subsection{Accumulated homology: The general case}

In this section, we provide the theoretical framework for studying what we term \emph{accumulated homology} of a random walk on a graph. In the section that follows, we will provide worked examples illustrating the method.

For the remainder of this section, we fix a graph $G$ as well as a
model of a random walk on $G$, $(X_t,P)$, which we assume to be
aperiodic. We also follow the notation from the previous section and
write $(Y_t,P_E)$ for the associated edge walk of $(X_t,P)$. Finally,
we fix now for all time a spanning tree $T_G$ for $G$. The edges in the compliment of $T_G$ will be written as $e_1,...,e_g$. The endpoints of $e_i$ will be written as $x_i$ and $y_i$, and we orient this edge as $(x_i,y_i)$. We write $e_i^{-}$ for the reversed edge $(y_i,x_i)$.

\begin{definition}
Let $\rho = (\rho_0,\rho_1,\ldots,\rho_r)$ be a path of length $r$ in $G$.h e \emph{closure} of $\rho$, denoted $\overline{\rho}$, is defined to be the loop in $G$ obtained from $\rho$ by concatenating the unique path in $T_G$ from $v_r$ to $v_0$. The \emph{accumulated homology of $\rho$ (with respect to $T_G$)} is the tuple
\[
H_1^{T_G}(\rho) := (h_1,\ldots,h_g) \in \Z^g,
\]
where,
\[
h_i = |\{j \mid \rho_j = x_i, \rho_{j+1} = y_i\}| - |\{j \mid \rho_j = y_i, \rho_{j+1} = x_i\}|.
\]
\end{definition}

\begin{remark}
The accumulated homology of $\rho$ is precisely the class in $H_1^{T_G}(G)$ represented by $\overline{\rho}$ with respect to the basis given by the oriented edges $e_i$. One may think of the closure of $\rho$ as a loop obtained from $\rho$ in such a way as to not "create more homology" than had already been accumulated by $\rho$.

Let $T'_G$ be a different choice of spanning tree, with (ordered and oriented) complementary edges $(e'_1,\ldots,e'_g)$. Writing $e_i$ for the basis vector of $H_1(G)$ corresponding to the oriented edge $e_i$, then the change of basis from $\{e_i\}$ to $\{e'_j\}$ is obtained in the following way. The unique path from $y_i$ to $x_i$ through $T_G$ will traverse a variety of the edges $e'_j$. Then the assignment
\[
e_i \mapsto \sum_{j} \alpha_j e'_j,
\]
where $\alpha_j$ records the (net) number of traversals of $e'_j$, defines the necessary change of basis. By the previous remark, this gives one a simple means to relate $H_1^{T_G}(\rho)$ to $H_1^{T'_G}(\rho)$. For this reason, we usually suppress the spanning tree $T_G$ in the notation for the accumulated homology. 
\end{remark}

Given $t \geq 0$, the collection of vertices
\[
(X_0, X_1, \ldots, X_t)
\]
defines a path in $G$, which we denote $\rho_X(t)$. The primary object of study in this section is the random variable
\[
H_1(\rho_X(t))
\]
More specifically, our interest will be in understanding the limiting distribution
of $H_1(\rho_X(t))$.
% \[
% \lim_{t \to \infty} H_1(\rho_X(t)) \stackrel{\mathcal{D}}{\rightarrow} \larger{?}
% \]

\begin{theorem}\label{ahclt}
There exists a $g \times g$ matrix $\mathbf{\Sigma}$ such that
\[
\lim_{t \to \infty} \frac{H_1(\rho_X(t))}{\sqrt{t}}
\stackrel{\mathcal{D}}{\rightarrow} N(0,\mathbf{\Sigma})
\]
where $\Norm(0,\mathbf{\Sigma})$ is the multivariate normal distribution with covariance matrix $\mathbf{\Sigma}$.
\end{theorem}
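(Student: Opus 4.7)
The plan is to realize each coordinate of $H_1(\rho_X(t))$ as an ergodic sum along the edge-walk $(Y_t,P_E)$, apply the scalar Markov-chain central limit theorem (Theorem~\ref{MarkovCLT}) to every real linear combination of these sums, and then assemble the multivariate limit via the Cramer-Wold device as recalled at the end of Section~\ref{sec:BG}.

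Concretely, for each non-spanning-tree edge $e_i=(x_i,y_i)$ define a statistic on $\E(G)$ by
\[
f_i(u,v) := \mathbbm{1}_{(u,v)=(x_i,y_i)} - \mathbbm{1}_{(u,v)=(y_i,x_i)}.
\]
Since $Y_s=(X_s,X_{s+1})$, the definition of accumulated homology gives the identity $H_1(\rho_X(t))_i=\sum_{s=0}^{t-1} f_i(Y_s)$. The key structural observation is that $E_{\pi_E}(f_i)=0$ for every $i$, which is exactly the content of \eqref{almostRev}. Before invoking Theorem~\ref{MarkovCLT} I also need the edge-walk to be irreducible and aperiodic. Irreducibility is routine: to pass from $(u,v)$ to $(z,w)$, use irreducibility of $(X_t,P)$ to get from $v$ to $z$ and then append the transition $z\to w$. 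Aperiodicity follows from laziness of $P$, which puts $(x,x)\in\E(G)$ with $P_E((x,x),(x,x))=P(x,x)>0$; a single aperiodic state in a finite irreducible chain suffices.

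For any $\theta\in\R^g$ set $f_\theta:=\sum_i\theta_i f_i$. Applying Theorem~\ref{MarkovCLT} to $f_\theta$ on the edge-walk yields
\[
\frac{1}{\sqrt{t}}\sum_{s=0}^{t-1} f_\theta(Y_s)\stackrel{\mathcal{D}}{\longrightarrow} N(0,\sigma^2(\theta)),
\]
where $\sigma^2(\theta)=\Var_{\pi_E}(f_\theta)+2\sum_{s\geq 1}\Cov_{\pi_E}(f_\theta(Y_0),f_\theta(Y_s))$. Because this is a quadratic form in $\theta$, there is a unique symmetric positive semidefinite $g\times g$ matrix $\mathbf{\Sigma}$ with $\sigma^2(\theta)=\theta^T\mathbf{\Sigma}\theta$, whose $(i,j)$-entry is the asymptotic covariance of the coordinate statistics $f_i,f_j$. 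Since $\frac{1}{\sqrt{t}}\sum_{s=0}^{t-1} f_\theta(Y_s)$ differs from $\theta\cdot H_1(\rho_X(t))/\sqrt{t}$ only by a vanishing term of order $t^{-1/2}$, the Cramer-Wold device promotes these one-dimensional CLTs to $H_1(\rho_X(t))/\sqrt{t}\Rightarrow N(0,\mathbf{\Sigma})$.

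I expect the main obstacle to lie not in the asymptotics but in the preparatory bookkeeping: identifying the accumulated homology as a linear statistic of the edge-walk and verifying the hypotheses of Theorem~\ref{MarkovCLT} on the edge-walk. The two crucial checks, aperiodicity and the vanishing of the stationary mean of each $f_i$, are precisely what the laziness assumption on $P$ and the near-reversibility identity \eqref{almostRev} were set up to deliver; once they are in hand, the Markov-chain CLT and Cramer-Wold do the remaining work.
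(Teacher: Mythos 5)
Your proposal is correct and follows essentially the same route as the paper: the same signed indicator statistics $f_i$ on the edge-walk, the identification of $H_1(\rho_X(t))$ as their ergodic sums, the use of \eqref{almostRev} to see that $E_{\pi_E}(f)=0$, and the multivariate conclusion via Theorem \ref{MarkovCLT} combined with the Cramer-Wold device exactly as the paper intends. If anything, you are slightly more careful than the paper in explicitly verifying irreducibility and aperiodicity of the edge-walk and in noting the harmless off-by-one in the summation range.
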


\begin{proof}
Write $\epsilon_i$ for the standard $i$-th canonical basis vector of $\R^g$. For this proof only, we define a multi-variate statistic $f:\R \E(G) \rightarrow \R^g$ by setting
\[
f(x,y) = \begin{cases} \epsilon_i &\text{$(x,y) = (x_i,y_i)$}\\ -\epsilon_i &\text{$(x,y) = (y_i,x_i)$}\\ 0 &\text{ otherwise.}\end{cases}
\]
Then we may write,
\[
 H_1(\rho_X(t)) = \sum_{n = 0}^t f(Y_n).
\]
In particular, the multivariate central limit theorem implies
\[
\sqrt{t}(H_1(\rho_X(t))/t - E_{\pi_E}(f)) \stackrel{\mathcal{D}}{\rightarrow} N(0,\mathbf{\Sigma})
\]
for some covariance matrix $\Sigma$ depending only on $f$, and where $\pi_E$ is the stationary distribution of $(Y_t,P_E)$. By (\ref{almostRev}), it is easily seen that $E_{\pi_E}(f) = 0$, whence we obtain
\[
H_1(\rho_X(t))/\sqrt{t} \stackrel{\mathcal{D}}{\rightarrow}
\Norm(0,\mathbf{\Sigma}) \,.
\]
\end{proof}

\begin{remark}
  As a functional of an ergodic Markov chain, it is obvious that $W$
  should obey a central limit theorem.   Our purpose of recording this
  result is that the matrix $\mathbf{\Sigma}$ should encode topological
  information in our primary application to configuration spaces. Moreover, we will see later
  that entries of $\mathbf{\Sigma}$ can be bounded in terms of spectral properties of the graph.
\end{remark}  

We dedicate the remainder of this section to providing bounds on the
entries of the matrix $\mathbf{\Sigma}$. These bounds will be given in
terms of the spectral gap of the matrix $P$. We will also compute a
closed form for $\mathbf{\Sigma}$ in terms of the so-called discrete
Green's functions associated to the walk. (See, e.g., \cite{CY} for a reference.) To aid us in this computation, we state the following convenient notation.

\begin{definition}
The vector space $\R \E^{\vee} = \Hom_\R(\R \E, \R)$ carries the structure of an inner-product space via the assignment,
\[
\langle f, g \rangle = \sum_{e \in \E} \pi_E(e)f(e)g(e)
\]
For each $1 \leq i \leq g$, we write $\mathbf{1}_i:\E \rightarrow \R$ for the statistic
\[
\mathbf{1}_i(x,y) = \begin{cases} 1 &\text{$(x,y) = (x_i,y_i)$}\\ -1 &\text{$(x,y) = (y_i,x_i)$}\\ 0 &\text{ otherwise.}\end{cases}
\]
In particular, we may write
\[
H_1(\rho_X(t)) = \sum_{n = 0}^t (\mathbf{1}_1(Y_n),\ldots,\mathbf{1}_g(Y_n)).
\]

We similarly define the $\pi$-normalized inner product on $\R V(G)^{\vee}$, as well as the statistics
\[
f_{i}(z) = \begin{cases} P(x_i,y_i) &\text{ if $z = x_i$,}\\ -P(y_i,x_i) &\text{ if $z = y_i$}\\ 0 &\text{ otherwise.}\end{cases}
\]

If $f:V(G) \rightarrow \R$ is a statistic, we will write
\[
\|f\|^2_{\pi} := \langle f,f \rangle
\]
\end{definition}

We begin with two short technical lemmas. The first illustrates the relationship between $P_E^t$ and $P^t$. Its proof is straight forward.

\begin{lemma}\label{reduceToVertex}
Let $e = (x,y),e' = (x',y')$ be two elements of $\E$. Then for any $t \geq 1$
\[
P_E^t(e,e') = P^{t-1}(y,x') \cdot P(x',y')
\]
\end{lemma}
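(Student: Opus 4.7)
The plan is to prove the identity by induction on $t$, exploiting the fact that $P_E$ is essentially $P$ with a bookkeeping constraint that the tail of one edge match the head of the next.

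For the base case $t=1$, I would unfold the definition of $P_E$ directly: $P_E(e,e') = P(x',y')$ precisely when $y = x'$, and is zero otherwise. Since $P^{0}(y,x') = \delta_{y,x'}$, the right-hand side $P^{0}(y,x') \cdot P(x',y')$ equals the same thing, and the two expressions agree.

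For the inductive step, assume the formula holds for some $t \geq 1$ and apply the Chapman--Kolmogorov equation for $P_E$:
\[
P_E^{t+1}(e,e') = \sum_{e'' = (x'',y'') \in \E(G)} P_E^{t}(e,e'') \, P_E(e'',e').
\]
Substituting the inductive hypothesis for $P_E^{t}(e,e'')$ and the definition of $P_E(e'',e')$ (which is $P(x',y')$ when $y''=x'$ and $0$ otherwise) collapses the sum over $y''$ to a single term, leaving
\[
P_E^{t+1}(e,e') = \Bigl(\sum_{x''} P^{t-1}(y,x'') \, P(x'',x')\Bigr) P(x',y') = P^{t}(y,x') \cdot P(x',y'),
\]
by Chapman--Kolmogorov for $P$. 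This closes the induction.

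There really is no serious obstacle here; the only thing to be careful about is that the sum over $e'' \in \E(G)$ is a sum over ordered pairs $(x'',y'')$ with $P(x'',y'') > 0$, but pairs with $P(x'',y'')=0$ contribute zero to the sum and can be included harmlessly, which is what makes the last step a genuine application of Chapman--Kolmogorov for $P$ rather than a restricted sum.
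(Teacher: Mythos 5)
Your proof is correct, and the paper itself omits the argument entirely (``Its proof is straight forward''), so there is nothing to diverge from: your induction via Chapman--Kolmogorov, together with the observation that pairs outside $\E(G)$ contribute zero terms, is exactly the routine verification being alluded to. An equivalent one-line alternative is to note that $Y_t=(X_t,X_{t+1})$ and apply the Markov property of $(X_t)$ directly, giving $P_E^t(e,e')=\P(X_t=x'\mid X_1=y)\,\P(X_{t+1}=y'\mid X_t=x')=P^{t-1}(y,x')P(x',y')$, but your version is equally complete.
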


Our second lemma gives us a convenient linear algebraic interpretation for certain covariances which will play a key role in Theorem \ref{SpectralBound}.

\begin{lemma}\label{covinner}
Let $\alpha = (\alpha_1,\ldots,\alpha_g)$ be some vector in $\R^g$. Then for any $t \geq 1$,
\begin{align*}
\Cov_{\pi_E}\Bigl(\sum_i \alpha_i \mathbf{1}_i(Y_0),\sum_i \alpha_i
  \mathbf{1}_i(Y_t) \Bigr)
  & = \Bigl\langle \sum_i \alpha_i \mathbf{1}_i,  P_E^t\cdot \sum_i
    \alpha_i\mathbf{1}_i \Bigr\rangle \\
  & = -\Bigl\langle \sum_i \alpha_i f_i, P^{t-1} \cdot \Bigl(\sum_j \alpha_j f_j\Bigr) \Bigr\rangle
\end{align*}
\end{lemma}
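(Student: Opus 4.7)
The plan is to treat the two equalities separately. For the first, write $g := \sum_i \alpha_i \mathbf{1}_i$. The identity (\ref{almostRev}) immediately gives $E_{\pi_E}(g) = \sum_i \alpha_i\bigl(\pi_E(x_i,y_i) - \pi_E(y_i,x_i)\bigr) = 0$, so both $Y_0$ and $Y_t$ (each distributed according to $\pi_E$ under stationarity) have zero expected value of $g$. The standard formula for covariance along a Markov chain then gives
\[
  \Cov_{\pi_E}(g(Y_0), g(Y_t)) = E_{\pi_E}\bigl(g(Y_0) g(Y_t)\bigr)
  = \sum_{e,e'} \pi_E(e)\, g(e)\, P_E^t(e,e')\, g(e')
  = \langle g, P_E^t g\rangle,
\]
which is the first equality.

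For the second equality, the key is to rewrite $P_E^t \mathbf{1}_i$ as a pullback of a statistic on vertices. Using Lemma \ref{reduceToVertex},
\[
  (P_E^t \mathbf{1}_i)(x,y) = \sum_{(x',y')\in\E}P^{t-1}(y,x')P(x',y')\mathbf{1}_i(x',y')
  = P^{t-1}(y,x_i)P(x_i,y_i) - P^{t-1}(y,y_i)P(y_i,x_i),
\]
and the right-hand side is precisely $(P^{t-1} f_i)(y)$. Thus $(P_E^t \mathbf{1}_i)(x,y) = (P^{t-1} f_i)(y)$, a function depending only on $y$.

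Pairing this against $\mathbf{1}_j$ and using $\pi_E(x,y) = \pi(x)P(x,y)$ together with the reversibility identity $\pi(x_j)P(x_j,y_j) = \pi(y_j)P(y_j,x_j)$, a short computation gives
\[
  \langle \mathbf{1}_j, P_E^t \mathbf{1}_i\rangle = \pi_E(x_j,y_j)\bigl[(P^{t-1}f_i)(y_j) - (P^{t-1}f_i)(x_j)\bigr].
\]
On the other hand, unfolding the definition of $\langle f_j, P^{t-1}f_i\rangle$ directly yields
\[
  \langle f_j, P^{t-1}f_i\rangle = \pi_E(x_j,y_j)\bigl[(P^{t-1}f_i)(x_j) - (P^{t-1}f_i)(y_j)\bigr],
\]
so $\langle \mathbf{1}_j, P_E^t \mathbf{1}_i\rangle = -\langle f_j, P^{t-1} f_i\rangle$. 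Bilinearity in the $\alpha_i$'s finishes the proof.

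The only subtle point is keeping track of signs when reversibility is applied inside the $\pi_E$-weighted inner product; once $(P_E^t \mathbf{1}_i)(x,y) = (P^{t-1} f_i)(y)$ is established, everything else is a bookkeeping exercise.
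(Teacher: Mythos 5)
Your proposal is correct and follows essentially the same route as the paper: both equalities are established by the same expansion using Proposition \ref{stationaryFormula}, Lemma \ref{reduceToVertex}, and the identity (\ref{almostRev}), with bilinearity reducing everything to the pairwise terms. Your intermediate observation that $(P_E^t \mathbf{1}_i)(x,y) = (P^{t-1}f_i)(y)$ depends only on $y$ is a slightly cleaner packaging of the four-term computation the paper writes out explicitly, but it is the same argument.
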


\begin{proof}
We may write,
\begin{align*}
\Cov_\pi\Bigl(\sum_i \alpha_i \mathbf{1}_i(Y_0),\sum_j \alpha_j
  \mathbf{1}_j(Y_t)\Bigr)
  &= \sum_{i,j}\alpha_i\alpha_j\Cov_\pi(\mathbf{1}_i(Y_0), \mathbf{1}_j(Y_t)).
\end{align*}

Using that $\mathbf{1}_i$ is mean zero and (\ref{almostRev}), we have:
\begin{align*}
\Cov_\pi(\mathbf{1}_i(Y_0), \mathbf{1}_j(Y_t)) &= E_{\pi_E}(\mathbf{1}_i(Y_0) \cdot \mathbf{1}_j(Y_t))\\
											 &=
                                                                                           \pi_E(e_i)P_E^t(e_i,e_j)
                                                                                           +
                                                                                           \pi_E(e_i^{-})
                                                                                           P_E^t(e_i^{-},e_j^{-})
  \\
  & \qquad - (\pi_E(e_i)P_E^t(e_i ,e_j^{-}) + \pi_E(e_i^{-}) P_E^t(e_i^{-},e_j))\\
											 &=
                                                                                           \pi_E(e_i)(P_E^t(e_i,e_j)-
                                                                                           P_E^t(e_i,e_j^{-}))
  \\
  & \qquad - \pi_E(e_i^{-})(P_E^t(e_i^{-},e_j) - P_E^t(e_i^{-},e_j^{-}))\\
											 &= \langle \mathbf{1}_i,  P_E^t \cdot \mathbf{1}_j \rangle.
\end{align*}

Putting the previous two computations together we have,
\[
\Cov_\pi\Bigl(\sum_i \alpha_i \mathbf{1}_i(Y_0),\sum_j \alpha_j \mathbf{1}_j(Y_t)\Bigr) = \sum_{i,j}
\alpha_i\alpha_j\langle \mathbf{1}_i, P_E^t \cdot \mathbf{1}_j \rangle = \Bigl\langle \sum_i \alpha_i \mathbf{1}_i, P_E^t \cdot \sum_j \alpha_j\mathbf{1}_j \Bigr\rangle,
\]
proving the first equality. To finish the proof, we note by Lemmas \ref{stationaryFormula} and \ref{reduceToVertex}, as well as (\ref{almostRev}),\\
\begin{align*}
\langle \mathbf{1}_i,  P_E^t \cdot \mathbf{1}_j \rangle = &\pi(y_i)P(y_i,x_i)(P(x_j,y_j)P^{t-1}(y_i,x_j)- P(y_j,x_j)P^{t-1}(y_i,y_j))\\
&- \pi(x_i)P(x_i,y_i)(P(x_j,y_j)P^{t-1}(x_i,x_j) - P(y_j,x_j)P^{t-1}(x_i,y_j)),
\end{align*}
which is immediately seen to be equal to $-\langle f_i, P^{t-1} \cdot f_j \rangle$.
\end{proof}

With these two lemmas in hand we are ready to begin producing our desired bounds. We begin with the following.

\begin{theorem}\label{SpectralBound}
Let $\mathbf{\Sigma}$ be as in the statement of Theorem \ref{ahclt}, let $\alpha = (\alpha_1,\ldots,\alpha_g)$ be some vector in $\R^g$, and set
\[
\sigma^2(\alpha) := \alpha^\dagger\mathbf{\Sigma}\alpha 
\]
Then,
\[
\sigma^2(\alpha) = 2\sum_{i}\alpha_i^2\pi_E(e_i) - 2\Bigl\langle (I-P)^{-1}\Bigl(\sum_i \alpha_i f_i\Bigr), \sum_j \alpha_j f_j \Bigr\rangle.
\]
In particular, one has
\[
2 \sum_i \alpha_i^2\pi_E(e_i) - \frac{2 \|\sum_i \alpha_i
  f_i\|^2_\pi}{\delta_{X_t,P}} \leq \sigma^2(\alpha) \leq
2\sum_{i}\alpha_i^2\pi_E(e_i) - \frac{2 \|\sum_i \alpha_i
  f_i\|^2_\pi}{1-\gamma_{{\rm min}}},
\]
where $\delta_{X_t,P}$ is the spectral gap of $(X_t,P)$ and $\gamma_{min}$ is the smallest eigenvalue of $P$.
\end{theorem}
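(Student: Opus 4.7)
The plan is to identify $\sigma^2(\alpha)$ with the asymptotic variance of the scalar statistic $F := \sum_i \alpha_i \mathbf{1}_i$ on the edge walk, expand that variance via the Markov CLT formula (\ref{variance}), and then carefully resum the covariance tail as $(I-P)^{-1}$ applied to $\sum_i \alpha_i f_i$.

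First I would apply the Cramer--Wold device together with Theorem~\ref{MarkovCLT} to the univariate functional $F$ on the edge walk $(Y_t,P_E)$, noting that $E_{\pi_E}(F)=0$ by the same symmetry argument as in the proof of Theorem~\ref{ahclt}. This gives
\[
\sigma^2(\alpha) \;=\; \Var_{\pi_E}(F) \;+\; 2\sum_{t\geq 1}\Cov_{\pi_E}(F(Y_0),F(Y_t)).
\]
For the variance, I would use that the $\mathbf{1}_i$ have pairwise disjoint supports and that $\pi_E(e_i)=\pi_E(e_i^-)$ by (\ref{almostRev}) to obtain $\Var_{\pi_E}(F)=2\sum_i \alpha_i^2\pi_E(e_i)$, which is precisely the first term of the claimed identity.

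For the covariance tail, I would apply Lemma~\ref{covinner} term-by-term to rewrite
\[
2\sum_{t\geq 1}\Cov_{\pi_E}(F(Y_0),F(Y_t)) \;=\; -2\sum_{t\geq 0}\bigl\langle \beta,\,P^t\beta\bigr\rangle_\pi, \qquad \beta := \sum_i \alpha_i f_i,
\]
so the task reduces to identifying the geometric series $\sum_{t\geq 0} P^t$ with $(I-P)^{-1}$. The key subtlety, and what I expect to be the main obstacle, is that $P$ has eigenvalue $1$ on the constants so the inverse only makes sense on the $\pi$-orthogonal complement. I would resolve this by a direct computation showing $\sum_x \pi(x)f_i(x)=\pi_E(x_i,y_i)-\pi_E(y_i,x_i)=0$ by (\ref{almostRev}), whence $\beta$ lies in that orthogonal complement, and the geometric series converges by ergodicity (equivalently by the spectral gap $\delta_{X_t,P}>0$). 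Substituting yields the first displayed identity.

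For the spectral inequalities, I would use reversibility of $(X_t,P)$ to note that $P$ is self-adjoint with respect to $\langle\cdot,\cdot\rangle_\pi$, and that laziness forces its eigenvalues into $[0,1]$. Hence, restricted to the $\pi$-orthocomplement of the constants, the eigenvalues of $P$ lie in $[\gamma_{\min},\,1-\delta_{X_t,P}]$, so those of $(I-P)^{-1}$ lie in $[1/(1-\gamma_{\min}),\,1/\delta_{X_t,P}]$. Since $\beta$ lives in this subspace, the Rayleigh quotient bound
\[
\frac{\|\beta\|_\pi^2}{1-\gamma_{\min}} \;\leq\; \bigl\langle (I-P)^{-1}\beta,\beta\bigr\rangle_\pi \;\leq\; \frac{\|\beta\|_\pi^2}{\delta_{X_t,P}}
\]
is immediate, and inserting this into the identity from the previous paragraph (with its leading minus sign) yields the two-sided spectral bound in the theorem.
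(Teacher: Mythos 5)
Your proposal is correct and follows essentially the same route as the paper: compute the stationary variance of $\sum_i \alpha_i \mathbf{1}_i$ using disjoint supports and (\ref{almostRev}), convert the covariance tail via Lemma~\ref{covinner}, resum the geometric series into $(I-P)^{-1}$ on the mean-zero subspace, and bound the resulting quadratic form by the extreme eigenvalues of $P$ (using laziness for non-negativity). Your explicit verification that $\sum_i \alpha_i f_i$ is $\pi$-mean-zero is a point the paper relegates to a remark, but the substance of the argument is identical.
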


\begin{remark}
Before we commence with the proof, it is important that we note what is meant by $(I-P)^{-1}$. Indeed, because $P$ is stochastic, it is a fact that $(I-P)$ is not invertible. That being said, however, it will be invertible on the orthogonal complement of the (unique) eigenvector of $P$ corresponding to the eigenvalue 1. The statistic $\sum_i \alpha_i f_i$ always has mean zero, by virtue of the fact that this is the case for all of the $f_i$, and therefore the expression $(I-P)^{-1}(\sum_i \alpha_i f_i)$ makes sense.\\

To be completely precise, $(I-P)^{-1}$ is the unique matrix satisfying
\[
(I-P)^{-1}(I-P) = (I-P)(I-P)^{-1} = I-P_0,
\]
where $P_0$ is the projection onto the vector $(\sqrt{\pi(x)})_{x \in V(G)}$.
\end{remark}

\begin{proof}
If we write $\alpha = (\alpha_1,\ldots,\alpha_g)$ for a vector in $\R^g$, then (\ref{variance}) implies we must compute
\[
\sigma^2(\alpha) = \Var_{\pi_E}\Bigl(\sum_i \alpha_i \mathbf{1}_i\Bigr) + 2\sum_{t \geq 1}\Cov_{\pi_E}\Bigl(\sum_i \alpha_i \mathbf{1}_i(Y_0),\sum_i \alpha_i \mathbf{1}_i(Y_t)\Bigr).
\]
We first compute $\Var_{\pi_E}\Bigl(\sum_i \alpha_i \mathbf{1}_i\Bigr)$. To start:
\begin{align*}
\Var_{\pi_E}\Bigl(\sum_i \alpha_i \mathbf{1}_i\Bigr) = E_{\pi_E}\Bigl(\Bigl(\sum_i \alpha_i \mathbf{1}_i\Bigr)^2\Bigr) - E_{\pi_E}\Bigl(\sum_i \alpha_i \mathbf{1}_i\Bigr)^2 = E_{\pi_E}\Bigl(\Bigl(\sum_i \alpha_i \mathbf{1}_i\Bigr)^2\Bigr).
\end{align*}
On the other hand, using (\ref{almostRev}) as well as the fact that $\mathbf{1}_i(Y_0)\mathbf{1}_j(Y_0) = 0$ whenever $i \neq j$,
\begin{multline*}
  E_{\pi_E}\Bigl[\bigl(\sum_i \alpha_i \mathbf{1}_i)^2\bigr)\Bigr]
  = \sum_i \alpha_i^2 E_{\pi_E}[\mathbf{1}_i^2]
  = \sum_i \alpha_i^2 (\pi_E(e_i) + \pi_E(e_i^{-})) \\
  = 2\sum_i \alpha_i^2 \pi_E(e_i) \,.
\end{multline*}

We next turn our attention to the covariance terms $\Cov_\pi(\sum_i \alpha_i \mathbf{1}_i(Y_0),\sum_j \alpha_j \mathbf{1}_j(Y_t))$. Lemmas \ref{covinner} and \ref{reduceToVertex} imply
\[
\Cov_\pi\Bigl(\sum_i \alpha_i \mathbf{1}_i(Y_0),\sum_j \alpha_j \mathbf{1}_j(Y_t)\Bigr) = -\Bigl\langle \sum_i \alpha_i f_i, P^{t-1} \cdot \sum_j \alpha_jf_j \Bigr\rangle.
\]
Therefore,
\begin{align*}
\sum_{t \geq 1} \Cov_\pi\Bigl(\sum_i \alpha_i \mathbf{1}_i(Y_0),\sum_j
  \alpha_j \mathbf{1}_j(Y_t) \Bigr)
  & = \sum_{t \geq 0} -\Bigl\langle \sum_i \alpha_i f_i, P^{t} \cdot
    \sum_j \alpha_jf_j \Bigr\rangle\\
  & = -\Bigl\langle \sum_i \alpha_i f_i, \Bigl(\sum_{t \geq 0}P^{t}\Bigr) \cdot \sum_j \alpha_jf_j \Bigr\rangle\\
 & = -\Bigl\langle \sum_i \alpha_i f_i, (I-P)^{-1} \cdot \sum_j \alpha_jf_j \Bigr\rangle.
\end{align*}

This completes the desired computation. Moving on to our claimed bounds, elementary linear algebra, as well as the assumption that the Markov chain $(X_t,P)$ is lazy and therefore has non-negative eigenvalues,  tells us that the inner product $\Bigl\langle \sum_i \alpha_i f_i, P^{t-1} \cdot \sum_j \alpha_jf_j \Bigr\rangle$ can be bounded as
\[
\Bigl\|\sum_i \alpha_i f_i\Bigr\|^2_{\pi} \min\{\gamma_P\}^{t-1} \leq \Bigl\langle \sum_i \alpha_i f_i, P^{t-1} \cdot \sum_j \alpha_jf_j \Bigr\rangle \leq \Bigl\|\sum_i \alpha_i f_i\Bigr\|^2_{\pi} \max\{\gamma_P\}^{t-1},
\]
where the min and max are over the eigenvalues of $P$ not equal to 1. Therefore,
\[
2\sum_{t\geq 1} \Cov_\pi\Bigl(\sum_i \alpha_i \mathbf{1}_i(Y_0),\sum_j \alpha_i \mathbf{1}_j(Y_t)\Bigr) \geq -2 \Bigl\|\sum_i \alpha_i f_i\Bigr\|^2_{\pi} \sum_{t \geq 0} \max\{\gamma_P\}^{t} = \frac{-2 \Bigl\|\sum_i \alpha_i f_i\Bigr\|^2_{\pi}}{\delta_{X_t,P}}
\]
A similar computation also yields upper bounds. Combining this with the previously computed variance term we conclude our desired bounds.
\end{proof}

By taking $\alpha$ to be the elementary basis vector in direction $i$, the above implies that,
\[
\mathbf{\Sigma}(i,i) = 2\pi_E(e_i) - 2\langle (I-P)^{-1}f_i, f_i
\rangle
\]
\begin{multline*}
2\pi_E(e_i) - \frac{2\pi_E(e_i) (P(x_i,y_i) +
  P(y_i,x_i))}{\delta_{X_t,P}} \\
\leq \mathbf{\Sigma}(i,i) 
\leq 2\pi_E(e_i) - \frac{2\pi_E(e_i) (P(x_i,y_i) +
  P(y_i,x_i))}{1-\gamma_{{\rm min}}}
\end{multline*}
We will use this to obtain bounds on the off-diagonal terms of the covariance matrix $\mathbf{\Sigma}$ in what follows.

\begin{theorem}\label{offdiagonalbounds}
Let $\mathbf{\Sigma}$ be as in the statement of Theorem \ref{ahclt}, and fix $1 \leq i < j\leq g$. Then,
\begin{align}
\mathbf{\Sigma}(i,j) &= 2 \langle (I-P)^{-1}f_i,f_j \rangle\\
\mathbf{\Sigma}(i,j) &\geq \frac{\pi_E(e_i) (P(x_i,y_i) + P(y_i,x_i))
                       + \pi_E(e_j) (P(x_j,y_j) + P(y_j,x_j))}{1 -
                       \gamma_{min}}\\
  & \quad - \frac{\|f_i + f_j\|^2_\pi}{\delta_{X_t,P}} \nonumber \\
\mathbf{\Sigma}(i,j) &\leq \frac{\pi_E(e_i) (P(x_i,y_i) + P(y_i,x_i))
                       + \pi_E(e_j) (P(x_j,y_j) +
                       P(y_j,x_j))}{\delta_{X_t,P}} \\
  & \quad - \frac{\|f_i + f_j\|^2_\pi}{1 - \gamma_{ {\rm min}}} \nonumber
\end{align}
\end{theorem}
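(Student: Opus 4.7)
The plan is to reduce the theorem to a polarization identity applied to the already-established formulas for the diagonal entries $\mathbf{\Sigma}(i,i)$ and for the full quadratic form $\sigma^2(\alpha)$ from Theorem \ref{SpectralBound}. Writing $\epsilon_i \in \R^g$ for the $i$-th standard basis vector, the bilinearity of $\alpha \mapsto \alpha^\dagger \mathbf{\Sigma}\alpha = \sigma^2(\alpha)$ yields
\[
\sigma^2(\epsilon_i + \epsilon_j) = \mathbf{\Sigma}(i,i) + \mathbf{\Sigma}(j,j) + 2\mathbf{\Sigma}(i,j),
\]
and hence
\[
\mathbf{\Sigma}(i,j) = \tfrac{1}{2}\bigl[\sigma^2(\epsilon_i + \epsilon_j) - \mathbf{\Sigma}(i,i) - \mathbf{\Sigma}(j,j)\bigr].
\]
Each of the three claims in the theorem is then obtained by substituting the appropriate identity or bound for the right-hand side.

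For the closed-form expression, I would apply the exact formula of Theorem \ref{SpectralBound} to $\alpha = \epsilon_i + \epsilon_j$ and expand $\langle (I-P)^{-1}(f_i + f_j),\, f_i + f_j\rangle$ by bilinearity. The diagonal contributions $\langle (I-P)^{-1}f_i, f_i\rangle$ and $\langle (I-P)^{-1}f_j, f_j\rangle$ cancel against the analogous terms appearing in the closed forms for $\mathbf{\Sigma}(i,i)$ and $\mathbf{\Sigma}(j,j)$ stated immediately after Theorem \ref{SpectralBound}. The two remaining cross terms $\langle (I-P)^{-1}f_i, f_j\rangle$ and $\langle (I-P)^{-1}f_j, f_i\rangle$ can be combined into a single inner product using that $(I-P)^{-1}$ is self-adjoint with respect to the $\pi$-weighted inner product, which itself follows from the detailed balance equation $\pi(x)P(x,y) = \pi(y)P(y,x)$ and the resulting self-adjointness of $P$ on $L^2(\pi)$.

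For the two-sided bounds I would run the same polarization, but now substitute the spectral estimates. Since $\mathbf{\Sigma}(i,i)$ and $\mathbf{\Sigma}(j,j)$ appear with a minus sign in the polarization identity, the lower bound on $\mathbf{\Sigma}(i,j)$ is produced by the lower bound on $\sigma^2(\epsilon_i + \epsilon_j)$ (which carries $\delta_{X_t,P}$ in the denominator) combined with the upper bounds on the diagonal entries (which carry $1 - \gamma_{\min}$); the upper bound on $\mathbf{\Sigma}(i,j)$ is obtained by swapping these roles. The quadratic term attached to $\sigma^2(\epsilon_i+\epsilon_j)$ is precisely $\|f_i + f_j\|_\pi^2$, while the diagonal norms can be rewritten using the reversibility identity $\|f_i\|_\pi^2 = \pi_E(e_i)\bigl(P(x_i,y_i) + P(y_i,x_i)\bigr)$ (which follows from $\pi(x_i)P(x_i,y_i) = \pi(y_i)P(y_i,x_i) = \pi_E(e_i)$) to reach the exact form stated in the theorem.

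The main obstacle is essentially sign and denominator bookkeeping: the minus signs introduced by subtracting the diagonal terms flip which spectral parameter ($\delta_{X_t,P}$ or $1-\gamma_{\min}$) pairs with which summand, so one must take care when assembling the two composite bounds. The only substantive input beyond routine algebra is the self-adjointness of $(I-P)^{-1}$ on mean-zero functions in $L^2(\pi)$, but this is a standard consequence of reversibility and may simply be invoked.
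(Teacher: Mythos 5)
Your proposal is correct and follows essentially the same route as the paper: the paper also sets $\alpha = \epsilon_i + \epsilon_j$, computes $\alpha^\dagger \mathbf{\Sigma}\alpha = \mathbf{\Sigma}(i,i) + \mathbf{\Sigma}(j,j) + 2\mathbf{\Sigma}(i,j)$ via Theorem \ref{SpectralBound}, and extracts both the closed form and the two-sided bounds by subtracting the diagonal formulas, pairing the lower (resp.\ upper) bound on the quadratic form with the upper (resp.\ lower) bounds on the diagonal entries exactly as you describe. Your additional remarks on the self-adjointness of $(I-P)^{-1}$ and the identity $\|f_i\|_\pi^2 = \pi_E(e_i)(P(x_i,y_i)+P(y_i,x_i))$ simply make explicit steps the paper leaves implicit.
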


\begin{proof}
Write $\alpha$ for the vector in $\R^g$, with $\alpha_i = \alpha_j = 1$ and $\alpha_k = 0$ otherwise. We have,
\begin{align*}
\mathbf{\Sigma}(i,i)+ \mathbf{\Sigma}(j,j) + 2\mathbf{\Sigma}(i,j) & =
                                                                     \alpha^\dagger
                                                                     \mathbf{\Sigma}
                                                                     \alpha\\
                                                                   & = 2(\pi_E(e_i) + \pi_E(e_j)) - 2\langle (I-P)^{-1}(f_i + f_j),f_i +f_j)\rangle
\end{align*}
Comparing the above to the explicit computation of $\mathbf{\Sigma}(i,i)$ given by Theorem \ref{SpectralBound}, we obtain the desired formula for $\mathbf{\Sigma}(i,j)$. On the other hand, using the equality $\mathbf{\Sigma}(i,i) + \mathbf{\Sigma}(j,j) + 2\mathbf{\Sigma}(i,j) = \alpha^\dagger \mathbf{\Sigma} \alpha$, along with the upper bound for $\alpha^\dagger \mathbf{\Sigma} \alpha$ and lower bound for $\mathbf{\Sigma}(i,i)$ given in Theorem \ref{SpectralBound}, we recover the desired upper bound. Our lower bound is computed similarly.
\end{proof}

\begin{remark}\label{conjrmk}
It is notable that the norm term $\|f_i + f_j, f_i + f_j\|_{\pi}^2$ behaves differently depending on whether the edges $e_i$ and $e_j$ are adjacent or not. For instance, one can deduce using similar reasoning to the proof of Theorem \ref{offdiagonalbounds}, that
\begin{align*}
|\mathbf{\Sigma}(i,j)| & \leq \left(\pi_E(e_i) (P(x_i,y_i) + P(y_i,x_i))
  + \pi_E(e_j) (P(x_j,y_j) + P(y_j,x_j)) \right)\\
& \qquad \times \left( \frac{1}{\delta_{X_t,P}} - \frac{1}{1 - \gamma_{min}} \right)
\end{align*}
whenever $e_i$ and $e_j$ are non-adjacent. This follows from the fact that adjacency determines whether the cross term $\langle f_i, f_j \rangle$ is zero or not.
\end{remark}

\subsection{Accumulated homology: examples}

In this section we consider a worked examples of accumulated winding on certain graphs. In particular, we will compute the covariance matrix $\mathbf{\Sigma}^{(n)}$ for the accumulated homology of the simple (lazy) random walk on the complete graph $K_n$. To be clear, this process proceeds as follows: at each step a coin is flipped to decide whether movement will be attempted. Assuming this first test passes, one then chooses an edge uniformly at random. If the edge happens to be adjacent to the current position, then the current position changes to the other endpoint of the edge. Otherwise the process holds.

The following linear algebra lemma is completely standard.

\begin{lemma} \label{twoconstantmatrix}
Let $M$ denote the $n \times n$ matrix whose diagonal terms are some constant $c_1$, and whose off diagonal terms are some constant $c_2$. Then,
\[
\det(M) = (c_1-c_2)^{n-1}(c_2(n-1)+c_1)
\]

In particular, if $P$ denotes the transition matrix of the simple random walk on the complete graph $K_n$, then the distinct eigenvalues of $P$ are 1, appearing with multiplicity 1, and $1 - \frac{n}{2\binom{n}{2}}$ appearing with multiplicity $n-1$.
\end{lemma}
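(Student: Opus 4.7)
The plan is to reduce the determinant calculation to a rank-one perturbation of a scalar matrix. Write $M = (c_1 - c_2) I_n + c_2 J_n$, where $J_n$ denotes the $n \times n$ all-ones matrix. Since $J_n$ has rank one, its spectrum is easy to read off: the vector $\mathbf{1} = (1,\ldots,1)^\dagger$ is an eigenvector with eigenvalue $n$, and any basis of the orthogonal complement $\mathbf{1}^\perp$ supplies $n-1$ linearly independent eigenvectors of eigenvalue $0$. Consequently, $M$ is diagonalized in the same basis, with eigenvalue $(c_1 - c_2) + c_2 n = c_1 + c_2(n-1)$ of multiplicity one and eigenvalue $c_1 - c_2$ of multiplicity $n-1$. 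Taking the product of eigenvalues yields the desired formula
\[
\det(M) = (c_1 - c_2)^{n-1}\bigl(c_1 + c_2(n-1)\bigr).
\]

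For the second assertion, I would unpack the transition probabilities of the stated walk on $K_n$. At each step the walker holds with probability $1/2$; otherwise it samples an edge uniformly from the $\binom{n}{2}$ edges of $K_n$ and crosses it if it is incident to the current vertex. Each vertex belongs to exactly $n-1$ edges, so for distinct $x,y \in V(K_n)$ one gets
\[
P(x,y) = \tfrac{1}{2\binom{n}{2}}, \qquad P(x,x) = 1 - \tfrac{n-1}{2\binom{n}{2}}.
\]
Hence $P$ is exactly a matrix of the type handled in the first part, with $c_1 = 1 - \tfrac{n-1}{2\binom{n}{2}}$ and $c_2 = \tfrac{1}{2\binom{n}{2}}$. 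Substituting into the eigenvalue computation above gives $c_1 + c_2(n-1) = 1$ (a sanity check of stochasticity) with multiplicity one and $c_1 - c_2 = 1 - \tfrac{n}{2\binom{n}{2}}$ with multiplicity $n-1$, as claimed.

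There is no real obstacle here: the only substantive content is the observation that $M$ is a rank-one perturbation of a scalar matrix, after which both statements are bookkeeping. I would keep the exposition short since the paper explicitly flags the lemma as standard.
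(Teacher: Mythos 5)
Your argument is correct: writing $M = (c_1-c_2)I_n + c_2 J_n$ and reading off the spectrum of the rank-one matrix $J_n$ is exactly the standard computation the paper has in mind when it omits the proof as ``completely standard,'' and your identification of $c_1 = 1 - \tfrac{n-1}{2\binom{n}{2}}$ and $c_2 = \tfrac{1}{2\binom{n}{2}}$ correctly matches the paper's description of the lazy walk (hold with probability $\tfrac12$, then pick one of the $\binom{n}{2}$ edges uniformly). Nothing to add; the stochasticity sanity check $c_1 + c_2(n-1) = 1$ is a nice touch.
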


One important consequence of Lemma \ref{twoconstantmatrix} is that
\[
\delta_{X_t,P} = 1-\gamma_{min}
\]
In particular, we may explicitly compute all of the terms of the matrix $\mathbf{\Sigma}^{(n)}$.

\begin{theorem}\label{completeCov}
Let $\mathbf{\Sigma}^{(n)}$ denote the covariance matrix of Theorem \ref{ahclt} for the simple random walk on $K_n$. Then, for any choice of spanning tree of $K_n$ paired with any choice of orientation of the extra edges $\{e_i = (x_i,y_i)\}$,
\begin{align}
  &\mathbf{\Sigma}^{(n)}(i,i) = \frac{n-2}{n^2\binom{n}{2}};\\
  \intertext{if the head of $e_i$ agrees with the tail of $e_j$, or
  vice versa, then}
  &\mathbf{\Sigma}^{(n)}(i,j) = \left(\frac{1}{2n\binom{n}{2}}\right)^2;\\
     \intertext{If the head (resp. tail) of $e_i$ agrees with the head
  (resp. tail) of $e_j$, then}\
  &\mathbf{\Sigma}^{(n)}(i,j) =
    -\left(\frac{1}{2n\binom{n}{2}}\right)^2;
     \intertext{If $e_i$ and $e_j$ do not share an endpoint, then}
&\mathbf{\Sigma}^{(n)}(i,j) = 0 \,.
\end{align}
\end{theorem}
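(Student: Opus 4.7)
The plan is to apply Theorems \ref{SpectralBound} and \ref{offdiagonalbounds} directly, leveraging the exceptional simplicity of the spectrum of the random walk on $K_n$. First I would record the explicit data of $(X_t,P)$: by the symmetry of $K_n$, the stationary distribution is uniform, $\pi(x) = 1/n$; the off-diagonal entries of $P$ are all equal to $1/(2\binom{n}{2})$; and the diagonal entries are all $1 - 1/n$. By Lemma \ref{twoconstantmatrix}, the eigenvalues of $P$ are $1$ (simple) and $1 - 1/(n-1)$ (with multiplicity $n-1$), so in particular $\delta_{X_t,P} = 1/(n-1)$.

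The observation driving the whole computation is that since $P$ has only a single non-unit eigenvalue, $\delta_{X_t,P} = 1 - \gamma_{\min}$. Hence the upper and lower bounds in Theorems \ref{SpectralBound} and \ref{offdiagonalbounds} coincide, promoting them from inequalities into exact closed-form expressions for each entry of $\mathbf{\Sigma}^{(n)}$. Equivalently, because every non-unit eigenspace has the same eigenvalue, $(I-P)^{-1}$ acts on the mean-zero subspace (which contains all of the $f_i$) as multiplication by $n-1$, and therefore $\langle (I-P)^{-1} f_i, f_j \rangle = (n-1)\langle f_i, f_j \rangle$ for every $i,j$.

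Next I would assemble the remaining ingredients. The edge-walk stationary distribution is $\pi_E(e_i) = \pi(x_i) P(x_i,y_i) = 1/(2n\binom{n}{2})$, and $\|f_i\|_\pi^2$ is a two-term sum supported on $\{x_i, y_i\}$. Substituting into the diagonal formula from Theorem \ref{SpectralBound} yields the claimed expression for $\mathbf{\Sigma}^{(n)}(i,i)$ after elementary simplification. For the off-diagonal entries I would compute $\langle f_i, f_j\rangle$ by a case analysis on the intersection pattern of $e_i$ and $e_j$: if $e_i$ and $e_j$ share no vertex, then $f_i$ and $f_j$ have disjoint support so $\langle f_i, f_j\rangle = 0$ and $\mathbf{\Sigma}^{(n)}(i,j) = 0$; otherwise, the inner product reduces to the single term $\pi(z) f_i(z) f_j(z)$ at the shared vertex $z$, whose sign is determined by whether $z$ is the head or the tail of each edge, and whose magnitude is independent of which of the three "shared endpoint" subcases occurs.

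The main obstacle I anticipate is purely bookkeeping: reconciling the various sign conventions, namely the sign of $f_i$ at its head versus its tail, the minus sign in Lemma \ref{covinner}, and the precise way in which the collapse of the bounds in Theorem \ref{offdiagonalbounds} to an equality propagates through the cross-term $\|f_i+f_j\|_\pi^2 - \|f_i\|_\pi^2 - \|f_j\|_\pi^2 = 2\langle f_i, f_j\rangle$. Once the signs are pinned down, the ``matching-role'' and ``opposing-role'' cases yield the stated opposite signs, and the magnitudes follow by direct arithmetic in $n$.
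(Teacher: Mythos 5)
Your proposal follows exactly the paper's route: the paper likewise computes $\pi_E(e_i)=\frac{1}{2n\binom{n}{2}}$ via Proposition \ref{stationaryFormula}, invokes Lemma \ref{twoconstantmatrix} to see that $P$ has a single non-unit eigenvalue so that $\delta_{X_t,P}=1-\gamma_{\min}$ and the bounds of Theorems \ref{SpectralBound} and \ref{offdiagonalbounds} collapse to equalities, and then reads off the entries (equivalently, $(I-P)^{-1}$ is scalar on the mean-zero subspace). Your filled-in details, including the case analysis on shared endpoints via $\langle f_i,f_j\rangle$, are a correct elaboration of the same argument, which the paper leaves implicit.
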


\begin{proof}
Using Lemma \ref{stationaryFormula}, we see that $\pi_E(e_i) = \frac{1}{2n\binom{n}{2}}$. The theorem then follows from Lemma \ref{twoconstantmatrix} and Theorems \ref{SpectralBound} and \ref{offdiagonalbounds}.
\end{proof}

\begin{remark}
It is notable that the expressions of Theorem \ref{completeCov} are all given by rational functions in the parameter $n$. This is predicted by the theory of stochastic virtual relations on $\FI$-graphs (see \cite{RW,RSW,RW2}). This theory will return later when we compute the winding of two particles on a star graph in Section \ref{windEx}.
\end{remark}

\section{Winding of random walks in $\F_n(G)$}
\label{sec:Wind}

\subsection{Winding: the general case}

In this section, we apply the work of the previous sections to study random winding in tree configuration spaces. This work is heavily inspired by a large variety of classical studies of random winding of points in the plane \cite{Bel,BF,Sp}, as well as newer studies of related phenomena \cite{WT}.\\

We begin this section by establishing the primary random process of study on $\F_n(G)$. We then turn our attention to defining how we will encode winding as a (multivariate) statistic of this process.

\begin{definition}
Fix $n \geq 0$, as well as a leaf-rooted planar tree $G$, which is not a path. Further assume that $G$ satisfies the subdivision condition of Theorem \ref{cellularModel}. We define a random process on the state space
\[
\X_G := \{ \{x_1,\ldots,x_n\} \mid x_i \in V(G)\},
\]
in the following way. Given a configuration $\{x_1,\ldots,x_n\} \in \X_G$, one first flips a coin to determine whether anything will move. Assuming this first test passes, one then chooses an edge of $G$ uniformly. If this edge is not connected to any of the $x_i$, or if its two endpoints are both in the set $\{x_1,\ldots,x_n\}$, then the process holds in place. Otherwise, if exactly one of the end points of the edge is of the form $x_i$, then one replaces $x_i$ in $\{x_1,\ldots,x_n\}$ with the other endpoint of this edge.
\end{definition}

\begin{remark}
Note that we will not at any point be varying the number of points being configured, and therefore the lack of the parameter $n$ in $\X_G$ and $(X_t^G,P_G)$ should not cause confusion.\\

We also note that the above process in no way uses the fact that we have chosen an embedding of $G$ into the plane, nor does it use the fact that $G$ has been rooted at one of its leaves. These assumptions on $G$ are important in our ultimate definition of the winding process.
\end{remark}
 
One should observe that the Markov chain $(X_t^G,P_G)$ is both
connected and aperiodic. In the literature, this random process is
usually called the \emph{discrete exclusion process}.
Exclusion processes like the above have been extensively studied in
the literature in a variety of different forms from a variety of
different perspectives. See \ocite{Lig1} for an overview.
In this work we will implicitly make use of the following result,
conjectured by D.~Aldous and proven by \ocite{CLR}.

\begin{theorem}[\cite{CLR}]
The spectral gap of the exclusion process $(X_t^G,P_G)$ is equal to the spectral gap of the (lazy) simple random walk on the tree $G$.
\end{theorem}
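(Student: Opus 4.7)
The plan is to follow the strategy of Caputo--Liggett--Richthammer, which lifts the question from the exclusion process to the \emph{interchange process} on $G$ (the Markov chain on $\mathfrak{S}_n$ where independent edges are swapped with the same rates). Since the generator of the exclusion process is the restriction of the interchange-process generator to a $\mathfrak{S}_n$-invariant subspace, its spectrum is a sub-spectrum of the interchange chain; moreover the single-particle walk's spectrum also arises from the interchange chain via the standard representation. Hence it suffices to show that the interchange process and the single-particle walk on $G$ have the same spectral gap, because then the comparison of Dirichlet forms forces the exclusion gap to coincide with both.

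First I would set up the algebraic framework: write the generator of the interchange chain as $L_G = \sum_{e \in E(G)} c_e(1 - T_e)$, where $T_e \in \mathbb{Z}[\mathfrak{S}_n]$ is the transposition associated to edge $e$ acting on $\mathbb{R}[\mathfrak{S}_n]$ by right multiplication, and $c_e$ is the edge weight (all $\tfrac{1}{|E(G)|}$ here, up to laziness). Because $L_G$ lies in the center-commuting part of the group algebra, its spectrum decomposes over irreducible $\mathfrak{S}_n$-representations $V_\lambda$. The Aldous conjecture amounts to the inequality $\mathrm{gap}(L_G|_{V_\lambda}) \geq \mathrm{gap}(L_G|_{V_{(n-1,1)}})$ for every partition $\lambda \neq (n)$, with the right-hand side equal to the single-particle gap by a standard identification.

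The central step is the \emph{octopus inequality}: for any vertex $v \in V(G)$ of degree $d$, if $T_1, \ldots, T_d$ are the transpositions along the edges incident to $v$ and $T_{ij}$ denotes the transposition along any resulting ``virtual'' edge between two neighbors of $v$, then one has the operator inequality
\[
(d-1)\sum_{i=1}^d (1 - T_i) \;\geq\; \sum_{1 \leq i < j \leq d} (1 - T_{ij})
\]
in $\mathbb{Z}[\mathfrak{S}_n]$. This inequality, proved by a clever symmetric-group manipulation involving projecting onto the span of $d$ points and verifying positivity on each $\mathfrak{S}_d$-isotypic component, provides the mechanism for network reduction: it allows one to replace a ``star'' of edges meeting at $v$ by the complete graph on its leaves, which in turn simplifies the tree.

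With the octopus inequality in hand, I would complete the proof by induction on the number of internal vertices of $G$. The base case is a star graph, where a direct computation (facilitated by the high symmetry) identifies the bottom eigenvalue on each isotypic component. For the inductive step, I would pick an internal vertex $v$ of $G$, use the octopus inequality to replace the star at $v$ by the complete graph on its neighbors, and then use the induction hypothesis on the smaller trees obtained by deleting $v$. The main obstacle, in both difficulty and subtlety, is the octopus inequality itself: verifying it by hand requires careful bookkeeping of representation-theoretic data, and it is the only step that genuinely uses the tree structure in an essential way. The rest is a careful bookkeeping exercise in comparing Dirichlet forms.
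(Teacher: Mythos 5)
First, a point of order: the paper does not actually prove this statement --- it is imported wholesale from Caputo--Liggett--Richthammer \cite{CLR} (the resolution of Aldous's conjecture), so there is no internal proof to compare against, and in the context of this paper the right move is simply to cite that work rather than reprove it. That said, your outline does faithfully reproduce the architecture of the CLR argument: lift to the interchange process, use the fact that the exclusion generator's spectrum sits inside the interchange generator's spectrum while the single-particle walk realizes the standard representation $V_{(n-1,1)}$, reduce everything to the inequality $\mathrm{gap}(L_G|_{V_\lambda}) \geq \mathrm{gap}(L_G|_{V_{(n-1,1)}})$, and run an induction that deletes a vertex via the octopus inequality. One caveat on framing: the octopus inequality and the whole induction are valid for arbitrary weighted graphs, not just trees, so it is not accurate to say that this step ``genuinely uses the tree structure in an essential way''; nothing in the CLR proof is special to trees.

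The substantive problem is that the octopus inequality as you state it is false. The correct form (CLR, specialized to unit rates at a vertex $v$ of degree $d$) is
\[
d\sum_{i=1}^{d}(1-T_i) \;\geq\; \sum_{1\leq i<j\leq d}(1-T_{ij}),
\]
equivalently, in the weighted network reduction the virtual edge between neighbors $x,y$ of $v$ receives weight $c_{xv}c_{yv}/c_v$ where $c_v=\sum_x c_{xv}$. Your constant $d-1$ already fails for $d=2$ on the standard representation: with $T_1=(0\,1)$, $T_2=(0\,2)$, $T_{12}=(1\,2)$ acting on functions of a single particle, your inequality reads $(f(0)-f(1))^2+(f(0)-f(2))^2\geq (f(1)-f(2))^2$, which is violated by $f(0)=0$, $f(1)=1$, $f(2)=-1$ (it asserts $2\geq 4$); the constant $d=2$ makes this sharp. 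Since the octopus inequality is, by your own account, the engine of the entire induction, the proposal as written does not close. With the corrected constant and the matching reweighting of the reduced graph, the remainder of your outline is the standard CLR argument.
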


For any $t \geq 0$, the sequence of configurations
\[
(X_0^G,X_1^G,\ldots,X_t^G)
\]
induces a path
\[
\rho_t(s):[0,1] \rightarrow \F_n(G).
\]
In the classical setting of two particles in the plane, one converts a random path into a winding statistic by charting the difference between the two points, and recording the total accumulated angle around the origin. In our setting we do not have access to these types of tools. Therefore, given such a path in $\F_n(G)$, our next goal will be to define a closure $\overline{\rho_t(s)}:S^1 \rightarrow \F_n(G)$, whose class in $H_1(\F_n(G))$ is completely determined by $\rho$.

Recall the discretized configuration space $D\F_n(G)$. The state space $\X_G$ agrees with the zero-skeleton of $D\F_n(G)$, and the Markov process $(X_t^G,P_G)$ is a model of a random walk on 	the one-skeleton of $D\F_n(G)$. Recall that we have assumed that $G$ is both planar and leaf-rooted. These two choices induce a well ordering on the vertices of $G$, as well as allow us to define the critical and collapsible $1$-cells of $D\F_n(G)$ (see Definition \ref{criticalCells}). To conclude, we fix the spanning tree of the one-skeleton of $D\F_n(G)$ induced by the collapsible $1$-cells, which notably does not contain the critical one-cells. We also fix an ordering of these critical cells, and orient them as in Definition \ref{criticalCells}. We will write these critical cells as $e_i = (x_i,y_i)$. Generally, we will use $(\widetilde{x_i},\widetilde{y_i})$ to denote a generic edge which lies on top of $e_i$, whenever the exact cell is unimportant.\\

For this section we will write $g$ for the number of critical one-cells of $D\F_n(G)$. Note that Theorem \ref{homologyBasis} tells us that the critical one-cells of $D\F_n(G)$ can be identified with a basis of $H_1(\F_n(G)) \cong \Z^g$.

\begin{definition}
Let $t \geq 0$, and write $\rho_t:[0,1] \rightarrow \F_n(G)$ for the path induced by the Markov chain $(X_0^G,\ldots,X_t^G)$. Then we define the \emph{closure} of $\rho_t$ to be the loop $\overline{\rho_t}:S^1 \rightarrow \F_n(G)$ defined by first performing $\rho_t$, and then performing the path through the aforementioned spanning tree of $D\F_n(G)$ from $X_t^G$ to $X_0^G$. We also write $[\overline{\rho_t}]$ to denote the element of $H_1(\F_n(G))$ induced by $\overline{\rho_t}$.\\

The \emph{winding of our Markov chain after $t$-steps} is then defined to be the (random) $g$-tuple
\[
W(t) := [\overline{\rho_t}] \in H_1(\F_n(G)) \cong \Z^g
\]
In particular, the winding of our chain can be equivalently thought of as a kind of accumulated homology statistic on $D\F_n(G)$, where all non-critical 1-cells that are excluded from our spanning tree are either ignored, or counted along side a unique critical edge in accordance with Theorem \ref{howCellsWork}.
\end{definition}

As with our section on accumulated homology, one of our interests in this section will be in understanding the limiting distribution of
$
\lim_{t \to \infty} W(t) \stackrel{\mathcal{D}}{\rightarrow} \larger{?}
$
Before we do this, however, we take a moment to note that the setup of our problem lends itself to another natural question.\\

The chosen embedding of $G$ into the plane induces a map of topological spaces
\[
\F_n(G) \hookrightarrow \F_n(\R^2),
\]
which in turn induces a map
\[
H_1(\F_n(G)) \rightarrow H_1(\F_n(\R^2)).
\]
It is well known that $H_1(\F_n(\R^2)) \cong \Z$ (see \cite{Arn}, for instance), and it is interesting to ask what the images are of our chosen basis vectors of $H_1(\F_n(G))$ under this map. In fact, it can be shown that if $b \in H_1(\F_n(G))$ is a basis vector corresponding to some critical $1$-cell, then
\[
b \mapsto \pm 1
\]
Indeed, this follows from work of Farley \ycite{Fa}, as well as An,
Drummond-Cole, and Knudsen \ycite{ADK} which show that the vector $b$
can be expressed topologically by what is known as a \emph{star
  move}.  See also \ocite{CL}. This inspires the following definition.

\begin{definition}\label{planarDef}
The \emph{planar winding our Markov chain after $t$-steps} is defined to be the (random) integer
\[
PW(t) := [\overline{\rho_t}] \in H_1(\R^2) \cong \Z.
\]
Equivalently, $PW(t)$ can be written as
\[
PW(t) = \sum_{i = 1}^g \epsilon_i W(t)_i
\]
where $\epsilon_i \in \{\pm 1\}$ is determined by the embedding of $G$ into the plane, as well as the choice of orientation of the critical one-cells.
\end{definition}

Having established the various definitions, we are now ready to state our main results.

\begin{theorem}\label{windingbound}
There exists a $g \times g$ matrix $\mathbf{\Sigma}$ such that,
\[
\lim_{t \to \infty} \frac{W(t)}{\sqrt{t}} \stackrel{\mathcal{D}}{\rightarrow} N(0,\mathbf{\Sigma})
\]
where $N(0,\mathbf{\Sigma})$ is the multivariate normal distribution with covariance matrix $\mathbf{\Sigma}$. Moreover, one has
\begin{align}
\frac{(\# e_i)}{\binom{E+1}{n}E^2}\Biggl(E - \frac{1}{\delta_{X_t,P}}\Biggr) &
                                                                   \leq  \mathbf{\Sigma}(i,i) \leq \frac{(\# e_i)}{\binom{E+1}{n}E^2}\Biggl(E - \frac{1}{1-\gamma_{min}}\Biggr)
\end{align}

\begin{multline}
\frac{(\# e_i) + (\# e_j)}{2(1-\gamma_{\min})\binom{A}{n}(A-1)^2} -
  \frac{\|\widetilde{f_i} + \widetilde{f_j}\|_\pi^2}{\delta_{X^G_t,P_G}}
                                                                  \\
                                                                 \leq
                                                                  \mathbf{\Sigma}(i,j)
                                                                  \leq \frac{\# e_i + \# e_j}{2\delta_{X^G_t,P_G}\binom{A}{n}(A-1)^2} - \frac{\|\widetilde{f_i} + \widetilde{f_j}\|_\pi^2}{1-\gamma_{\min}}
\end{multline}
where $E$ is the number of edges of $G$, $(\# e_i)$ is the number of edges lying above the critical edge $e_i$, $\gamma_{min}$ is the smallest eigenvalue of $P_G$, and $\widetilde{f_i}:\X_G \rightarrow \R$ is the statistic which assumes the value $\frac{1}{2(E-1)}$ at vertices of the form $\widetilde{x_i}$, $-\frac{1}{2(E-1)}$ at vertices of the form $\widetilde{y_i}$, and 0 elsewhere.
\end{theorem}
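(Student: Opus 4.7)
The plan is to recognize $W(t)$ as a concrete instance of the accumulated homology framework of Section \ref{sec:AH}, applied to the exclusion process viewed as a random walk on the one-skeleton of $D\F_n(G)$, and then to extract the spectral bounds by repeating the argument of Theorem \ref{SpectralBound} with appropriately aggregated statistics. The one genuinely new ingredient beyond Theorem \ref{ahclt} is the aggregation mandated by Theorem \ref{howCellsWork}: the basis for $H_1(\F_n(G))$ consists only of the critical $1$-cells, so each non-collapsible $1$-cell $c$ must be counted toward the unique critical cell it lies on top of.

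Concretely, I would fix the spanning tree of $D\F_n(G)^{(1)}$ given by the collapsible cells and, for each critical cell $e_i$, introduce the aggregated edge-indicator
\[
\widetilde{\mathbf{1}}_i := \sum_{c \text{ lies on top of } e_i} \mathbf{1}_c : \E(D\F_n(G)^{(1)}) \to \R.
\]
By Theorem \ref{howCellsWork}, one then has $W(t)_i = \sum_{s=0}^{t-1} \widetilde{\mathbf{1}}_i(Y^G_s)$, where $(Y^G_s)$ is the edge walk of $(X^G_t, P_G)$. Applying Theorem \ref{MarkovCLT} to each linear combination $\sum_i \alpha_i \widetilde{\mathbf{1}}_i$, which has mean zero by (\ref{almostRev}), and invoking the Cramer-Wold device, one obtains convergence of $W(t)/\sqrt{t}$ to a centered multivariate normal $N(0, \mathbf{\Sigma})$.

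For the explicit bounds, I would repeat the analyses of Lemmas \ref{reduceToVertex} and \ref{covinner} and Theorem \ref{SpectralBound} verbatim, with the single-edge statistics $\mathbf{1}_{e_i}$ and $f_{e_i}$ replaced by their aggregates $\widetilde{\mathbf{1}}_i$ and $\widetilde{f}_i := \sum_{c \text{ over } e_i} f_c$. The uniform stationary distribution on configurations gives $\pi(c) = 1/\binom{E+1}{n}$, while the transition probability along any allowed swap is $1/(2E)$, so $\pi_E(c) = 1/(2E\binom{E+1}{n})$ for every directed $1$-cell; the variance term therefore contributes $2(\# e_i)\pi_E(c) = (\# e_i)/(E\binom{E+1}{n})$, matching the leading terms of the claimed bounds. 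The aggregate $\widetilde{f}_i$ inherits the piecewise form described in the statement, whose $\pi$-norm is computed directly. The off-diagonal bounds then follow from applying the diagonal bound to $\alpha = \epsilon_i + \epsilon_j$ and subtracting the two diagonal contributions, exactly as in Theorem \ref{offdiagonalbounds}. To rewrite $\delta_{X^G_t, P_G}$ in terms of the spectral gap of the lazy simple random walk on $G$, I would invoke the Caputo-Liggett-Richthammer theorem.

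The main obstacle I anticipate is the combinatorial bookkeeping needed to verify that the aggregate statistic $\widetilde{f}_i$ really has the clean piecewise form asserted in the statement: namely, that no configuration occurs as the tail (respectively head) of two distinct $1$-cells lying over a common critical cell $e_i$, which would otherwise require the listed values to be weighted by multiplicities. This should follow from Definition \ref{criticalCells} and the fact that a $1$-cell of $D\F_n(G)$ is uniquely determined by its edge coordinate together with the placement of the remaining vertices among the components of $G$ minus $\tau(\sigma_1)$. Once this geometric lemma is in hand, the analytic portion of the argument is a direct transcription of the reasoning of Section \ref{sec:AH}.
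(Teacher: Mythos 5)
Your proposal is correct and follows essentially the same route as the paper: the paper likewise observes via Theorem \ref{howCellsWork} that winding is the accumulated homology with indicator functions replaced by sums over the cells lying on top of each critical cell, and then applies Theorem \ref{SpectralBound} to these aggregated statistics. The ``main obstacle'' you flag --- that cells lying over a common critical edge are disjoint, so the aggregation introduces no extra cross terms --- is precisely the one point the paper's proof singles out and asserts.
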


\begin{proof}

It is not the case in general that the winding statistic is the accumulated homology on $D\F_n(G)^{(1)}$, because of the edges which lie over the critical edges. That being said, Theorem \ref{howCellsWork} implies that winding is a projection of this accumulated homology. Moreover, because the cells lying over a given critical edge are disjoint, the sum of signed indicator functions do not create covariance terms when applying the computations of Theorem \ref{SpectralBound}. Our theorem is then just a simple consequence of Theorem \ref{SpectralBound}, where one replaces the indicator functions by sums of indicator functions of edges lying over critical edges.

\end{proof}

Using the relationship between our winding statistic, and the planar winding statistic, we also can conclude the following.

\begin{theorem}
If $G$ has maximum vertex degree 3 then
\[
\lim_{t \to \infty} \frac{PW(t)}{\sqrt{t}} \stackrel{\mathcal{D}}{\rightarrow} N(0,\sigma_G^2),
\]
where $N(0,\sigma_G^2)$ is the standard centered normal distribution with variance $\sigma_G^2$. Moreover, one has,
\[
\frac{N}{E\binom{E+1}{n}} - \frac{2\|\sum_i \epsilon_i\widetilde{f_i}\|_{\pi}^2}{\delta_{X^G_t,P_G}} \leq \sigma^2_G \leq \frac{N}{E\binom{E+1}{n}} - \frac{2\|\sum_i \epsilon_i\widetilde{f_i}\|_{\pi}^2}{1-\gamma_{min}}
\]
where $N$ is the number of non-collapsible 1-cells of $D\F_n(G)$.
\end{theorem}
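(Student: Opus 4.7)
The proof will have two independent tasks: deriving the CLT for $PW(t)$, and establishing the explicit spectral bounds on $\sigma_G^2$.

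For the CLT, my plan is to start by noting that Definition \ref{planarDef} expresses $PW(t)$ as the linear functional $\epsilon \cdot W(t)$, where $\epsilon = (\epsilon_1,\ldots,\epsilon_g) \in \{\pm 1\}^g$. Here is the single place the maximum-degree-3 hypothesis enters: the image of a basis vector of $H_1(\F_n(G))$ in $H_1(\F_n(\R^2)) \cong \Z$ is guaranteed to be exactly $\pm 1$ only when the vertex supporting the underlying star move is trivalent (higher valence vertices produce integer coefficients of larger absolute value). Once this identification is in hand, Theorem \ref{windingbound} gives $W(t)/\sqrt{t} \xrightarrow{\mathcal{D}} N(0,\mathbf{\Sigma})$, and continuity of the linear functional $\epsilon \cdot (-)$ immediately yields $PW(t)/\sqrt{t} \xrightarrow{\mathcal{D}} N(0, \epsilon^{\dagger} \mathbf{\Sigma} \epsilon)$; thus $\sigma_G^2 = \epsilon^{\dagger}\mathbf{\Sigma}\epsilon$.

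The work is then to reproduce the calculation of Theorem \ref{SpectralBound} for this particular choice of $\alpha = \epsilon$, but passing through the projection description of Theorem \ref{howCellsWork} as in the proof of Theorem \ref{windingbound}. Concretely, each critical cell $e_i$ contributes not a single indicator $\mathbf{1}_i$ but the sum $\widetilde{\mathbf{1}}_i$ of $(\# e_i)$ signed indicators, one for each non-collapsible $1$-cell lying on top of $e_i$; correspondingly, $f_i$ is replaced by $\widetilde{f_i}$. Since distinct non-collapsible cells lying over a fixed critical cell have disjoint supports, and distinct critical cells partition the non-collapsible cells, the variance term
\[
2 \sum_i \epsilon_i^2 \, \pi_E\bigl(\text{edges above } e_i\bigr)
\]
collapses to $2 N \cdot \pi_E(\widetilde{e})$ summed over all $N$ non-collapsible $1$-cells. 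Here I would verify, via Proposition \ref{stationaryFormula}, that the stationary distribution of the edge walk on the exclusion process $(X_t^G,P_G)$ is uniform with value $\pi_E(\widetilde e) = \frac{1}{2E\binom{E+1}{n}}$, since $\pi$ is uniform on $\binom{E+1}{n}$ configurations and each legal transition is selected with probability $\frac{1}{2E}$. This produces the leading term $\frac{N}{E\binom{E+1}{n}}$.

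The covariance side of the argument proceeds exactly as in the last paragraph of the proof of Theorem \ref{SpectralBound}: one writes the sum of covariances as
\[
-2 \Bigl\langle \sum_i \epsilon_i \widetilde{f_i},\; (I-P_G)^{-1} \sum_j \epsilon_j \widetilde{f_j} \Bigr\rangle,
\]
and sandwiches the quadratic form using that $P_G$ is lazy, so that its eigenvalues on the orthogonal complement of the stationary eigenvector lie in $[\gamma_{\min},\, 1-\delta_{X_t^G,P_G}]$. This yields the stated two-sided bound with $\|\sum_i \epsilon_i \widetilde{f_i}\|_\pi^2$ in the numerator. The main obstacle I anticipate is bookkeeping rather than analytic: one must keep careful track of the distinction between critical and non-collapsible $1$-cells, verify that cross terms between indicators of cells lying over the same critical edge indeed drop out (so the variance term remains diagonal in $i$), and confirm that the $\widetilde{f_i}$ appearing in the correction term match the formula of Theorem \ref{windingbound}. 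No new probabilistic input is required beyond what has already been assembled.
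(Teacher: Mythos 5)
Your proposal is correct and follows exactly the route the paper intends: the paper states this result as an immediate consequence of Theorem \ref{windingbound} via the identity $PW(t)=\sum_i\epsilon_i W(t)_i$ of Definition \ref{planarDef}, i.e.\ applying the quadratic-form computation of Theorem \ref{SpectralBound} with $\alpha=\epsilon$ and the statistics $\widetilde{f_i}$, which is precisely your argument (the paper in fact omits the proof entirely). Your accounting of the variance term via $\pi_E(\widetilde{e})=\tfrac{1}{2E\binom{E+1}{n}}$ and the covariance term via $(I-P_G)^{-1}$ reproduces the stated bounds.
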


We note that this kind of bounded winding in the plane was studied in for two Brownian particles constrained within an annulus in \cite{WT}. Their results are very similar to ours.\\

\subsection{Winding: the two particle case}

In this section, we considering the winding process of two non-overlapping particles in a tree. Limiting the number of particles in this way simplifies exposition considerably, as we never have to worry about the condition of Theorem \ref{cellularModel}. This case is also significant due to its parallels with the more classical setting of two particles in the plane \cite{Sp,WT}.\\

\emph{As in the previous section, we will now fix for all time a tree $G$, an embedding of $G$ in the plane, as well as a leaf to be the root of $G$.}

\begin{definition}
For a graph $\Gamma$, we write $\binom{\Gamma}{2}$ to denote the graph whose vertices are indexed by unordered pairs of vertices of $\Gamma$, and whose edge relation is given by
\begin{align*}
\{v,w\} &\sim \{u,w\}, \text{ where $\{v,u\} \in E(\Gamma)$}\\
\{v,w\} &\sim  \{v,u\}, \text{ where $\{w,u\} \in E(\Gamma)$}
\end{align*}
\end{definition}

If $G$ is a planar leaf-rooted tree, then we see that $\binom{G}{2}$ is precisely the one-skeleton of $D\F_2(G)$. In particular, our random process of two particles moving in $\F_2(G)$ may be equivalently thought of as a model of a random walk on $\binom{G}{2}$. As we saw in the previous section, Theorems \ref{cellularModel} and \ref{homologyBasis} then imply that the accumulated homology of this walk on $\binom{G}{2}$, with respect to the spanning tree of collapsible $1$-cells, precisely encodes the winding of our original process in $\F_2(G)$.

Recall that we write $(X^G_t,P_G)$ for the exclusion process on $\binom{G}{2}$.\\

The major conjecture of this section will suggest that the winding of our random process is robust enough to recover the tree on which the particles are moving. 

\begin{conjecture}[The Strong Conjecture]\label{mainconj}
Assume that $G$ has no vertices of degree 2, and write $\mathbf{\Sigma}_G$ for the covariance matrix determined by the large $t$ behavior of the winding statistic of the exclusion process on $\binom{G}{2}$, as in Theorem \ref{windingbound}. If $G'$ is another planar leaf-rooted tree with no vertices of degree $2$ for which $\Sigma_G = \Sigma_{G'}$, then $G$ and $G'$ are isomorphic as planar leaf-rooted trees.
\end{conjecture}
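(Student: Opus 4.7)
The plan is a reconstruction argument: show that enough combinatorial invariants of the planar leaf-rooted tree $G$ can be read off from $\mathbf{\Sigma}_G$ to determine it up to isomorphism. The approach proceeds in three stages.

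First, I would extract global invariants. The size $g$ of $\mathbf{\Sigma}_G$ equals $\dim H_1(\F_2(G))$, which by Theorem \ref{homologyBasis} is the number of critical 1-cells of $D\F_2(G)$ and is, for trees with no degree-2 vertex, a known combinatorial function of the degree sequence. The trace of $\mathbf{\Sigma}_G$, combined with Theorem \ref{windingbound} and the Aldous--Caputo--Liggett--Richthammer identification of the exclusion spectral gap with the simple-walk spectral gap, should yield the edge count $E$ of $G$, the spectral gap of $P_G$, and the multiplicities $(\#e_i)$. These already pin $G$ down to a highly restricted class.

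Second, I would localize $\mathbf{\Sigma}_G$ to branch points. By Definition \ref{criticalCells}, every critical 1-cell $c_i = \{\sigma_1^i,\sigma_2^i\}$ has $\tau(\sigma_1^i)$ located at a branch point of $G$; call this the \emph{base point} of $c_i$. Group the rows and columns of $\mathbf{\Sigma}_G$ by base point. The key technical step is to prove a locality principle: the off-diagonal entry $\mathbf{\Sigma}_G(i,j)$ should be quantitatively larger when $c_i$ and $c_j$ share a base point (or have nearby base points) than when their base points are far apart in $G$. Granted such a principle, hierarchical clustering of $\mathbf{\Sigma}_G$ recovers the set of branch points, the degree at each (from the cluster size, together with the formulas of Definition \ref{criticalCells} that count critical cells per branch point), and the pairwise tree-distances between them.

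Finally, I would reconstruct $G$ inductively from the root. The row/column ordering of $\mathbf{\Sigma}_G$ inherits the depth-first labeling from the planar leaf-rooted structure, so the orientations and relative positions of the critical edges are encoded directly in the indexing. Starting from the root and moving outward, one builds the tree branch point by branch point, using the clustered block structure of $\mathbf{\Sigma}_G$ to recover both the combinatorial and planar data. The main obstacle is unambiguously the locality step: the matrix $(I - P_G)^{-1}$ appearing in Theorem \ref{windingbound} is a Green's function for the exclusion walk on $\binom{G}{2}$, and its entries reflect global mixing on this much larger state space rather than local geometry in $G$. Extracting the requisite localization almost certainly requires a careful comparison of $(I-P_G)^{-1}$ with the Green's function of the simple walk on $G$, together with explicit control of how each statistic $\widetilde{f}_i$ decomposes over the cells lying above its critical edge. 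I suspect it is exactly this technical step that limits the authors' planned result to the unitrivalent case, where each branch point is trivalent and the combinatorics of critical cells at each branch point is minimal, making the Green's function analysis tractable by more direct means.
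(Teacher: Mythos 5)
The statement you have attempted is Conjecture \ref{mainconj}, which the paper explicitly leaves open: the authors say they intend to prove it (at least for unitrivalent trees) in future work, and here they offer only evidence, namely that $\mathbf{\Sigma}_{G_1}$ is diagonal for the left tree of Figure \ref{samedegseq} while $\mathbf{\Sigma}_{G_2}(3,4)\neq 0$ for the right tree --- and even that last inequality is only supported by numerical simulation. So there is no proof in the paper to compare against, and your proposal must be judged as a strategy for an open problem. As such it is not a proof: everything funnels through the ``locality principle'' of your second stage, which you yourself flag as unproven, and without it neither the clustering step nor the inductive reconstruction can begin.

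More seriously, the locality principle as you state it is false, and the paper's own example shows this. The Proposition at the end of Section \ref{windEx} proves that $\mathbf{\Sigma}_{G_1}$ is a diagonal matrix: every off-diagonal entry vanishes because an automorphism of $G_1$ negates one basis vector while fixing another. Yet $G_1$ has four branch points, several of them adjacent. Hence $|\mathbf{\Sigma}_G(i,j)|$ is not bounded below by any function of the proximity of the base points of $c_i$ and $c_j$; symmetry can annihilate the correlation entirely, so hierarchical clustering by entry size cannot recover pairwise tree-distances between branch points. This is consistent with the paper's own heuristic, which attributes nonzero correlation to an \emph{imbalance} of vertices around adjacent branching vertices, not to adjacency itself --- any viable reconstruction must exploit the exact values and vanishing pattern, not a ``large when close, small when far'' dichotomy. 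Your first stage also overreaches: Theorem \ref{windingbound} gives only upper and lower bounds on the diagonal entries in terms of $E$, $(\#e_i)$, and spectral data, so the trace does not by itself determine those quantities. A genuine attack would have to start from the exact formula $\mathbf{\Sigma}(i,j)=2\langle (I-P)^{-1}f_i,f_j\rangle$ of Theorem \ref{offdiagonalbounds} and analyze the Green's function of the exclusion walk on $\binom{G}{2}$ directly, which is precisely the part you have deferred.
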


In the next section, we provide evidence for this conjecture by showing that the covariance matrices associated to the two trees of Figure \ref{samedegseq} are distinct. We also note that there is a weaker version of Conjecture \ref{mainconj}, whose affirmation would still be significant from the perspective of graph configuration spaces.

\begin{conjecture}[The Weak Conjecture]
Assume that $G$ has no vertices of degree 2, and write $\mathbf{\Sigma}_G$ for the covariance matrix determined by the large $t$ behavior of the winding statistic of the exclusion process on $\binom{G}{2}$, as in Theorem \ref{windingbound}. If $G'$ is another planar leaf-rooted tree with no vertices of degree $2$ for which $\Sigma_G = \Sigma_{G'}$, then $G$ and $G'$ have the same degree sequence.
\end{conjecture}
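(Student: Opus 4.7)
The plan is to reconstruct the degree sequence of $G$ from $\mathbf{\Sigma}_G$ by extracting successively finer combinatorial invariants of the branch-degree multiset, using the explicit formulas available from Theorems \ref{SpectralBound}, \ref{offdiagonalbounds}, and \ref{windingbound}.

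First, I would count critical $1$-cells explicitly. By Definition \ref{criticalCells} and Theorem \ref{homologyBasis}, a critical $1$-cell of $D\F_2(G)$ (with $G$ a planar leaf-rooted tree with no degree-$2$ vertex, rooted at a leaf) is a pair $\{\sigma_1, u\}$ where $\sigma_1$ is an edge with $\tau(\sigma_1) = w$ for some branch vertex $w$, and $u$ is a child of $w$ with DFS-label strictly less than $\iota(\sigma_1)$. Summing over $w$ and over the $d_w - 1$ children of $w$ gives
\[
g = \dim \mathbf{\Sigma}_G = \sum_{w \text{ branch}} \binom{d_w - 1}{2} = \tfrac{1}{2}(p_2 - p_1),
\]
where $p_k := \sum_{w \text{ branch}}(d_w - 1)^k$. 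The handshake identity $\sum_v d_v = 2(V-1)$, decomposed into leaves and branch vertices, gives $p_1 = V - 2$, so $\dim\mathbf{\Sigma}_G$ already determines $p_2$ once $V$ is known.

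Second, I would recover $V$, and hence $L$ and $B$, from the explicit formulas for the matrix entries. The diagonal formula $\mathbf{\Sigma}_G(i,i) = 2\pi_E(e_i) - 2\langle (I - P)^{-1} f_i, f_i \rangle$ of Theorem \ref{SpectralBound}, combined with the bounds of Theorem \ref{windingbound}, involves $\pi_E(e_i)$ (which is of order $1/\binom{V}{2}$) and the edge-multiplicities $(\#e_i)$. The scaling behavior of $\mathrm{tr}\,\mathbf{\Sigma}_G$ in these quantities should pin down $V$.

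Third, I would extract higher power sums $p_k$ for $k \geq 3$ from more refined polynomial invariants of $\mathbf{\Sigma}_G$ — for instance $\mathrm{tr}(\mathbf{\Sigma}_G^k)$, entry sums, or other symmetric functions of the spectrum — and, using the formulas of Theorem \ref{windingbound}, decompose each into a sum over branch vertices of terms that are polynomial in $d_w$ and in the global parameters $V$, $L$, $B$. With enough such $p_k$ in hand, Newton's identities recover the multiset $\{d_w - 1\}_{w \text{ branch}}$, which together with $L$ reconstitutes the full degree sequence.

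The hardest step by far is the third. The entries of $\mathbf{\Sigma}_G$ depend on the global Green's function $(I - P)^{-1}$ of the lazy exclusion process on $\binom{G}{2}$, and it is not immediate that polynomial invariants of $\mathbf{\Sigma}_G$ decompose as per-branch-vertex sums of quantities depending only on $d_w$ and on global tree parameters. Making this decomposition rigorous appears to require either a path-counting identity on the configuration graph $\binom{G}{2}$, or an electrical-network argument, and would plausibly benefit from the Caputo--Liggett--Richthammer identification of the spectral gap of the exclusion process with that of the simple random walk on $G$ itself. Verifying this local-to-global decomposition is where the bulk of the work in a full proof would lie.
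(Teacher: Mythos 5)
The first thing to say is that the paper does not prove this statement: it is stated as an open conjecture (the ``Weak Conjecture''), the authors explicitly defer a proof to future work, and the only support offered in the paper is a heuristic discussion plus a numerical computation distinguishing the two trees of Figure \ref{samedegseq}. So there is no proof in the paper to compare yours against, and your proposal has to be judged on its own terms as an attempted resolution of an open problem.

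On those terms, your first step is correct and useful: for $n=2$ the critical $1$-cells at an essential vertex $w$ are counted by $\binom{d_w-1}{2}$, so $g=\sum_w\binom{d_w-1}{2}=\tfrac{1}{2}(p_2-p_1)$ with $p_1=V-2$, consistent with the paper's star-graph computation where $g=\binom{l-1}{2}$. But the proposal does not close the gap after that. Step 2 attempts to recover $V$ from ``the scaling behavior of $\mathrm{tr}\,\mathbf{\Sigma}_G$,'' yet Theorem \ref{windingbound} supplies only two-sided \emph{inequalities} involving $\delta_{X_t,P}$ and $\gamma_{\min}$, and a single numerical matrix sandwiched between two bounds does not determine $V$; you would need the exact Green's-function expression $2\pi_E(e_i)-2\langle(I-P)^{-1}f_i,f_i\rangle$ together with a way to isolate $V$ from it, and no such argument is given. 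Step 3 is the real obstruction: there is no argument that $\mathrm{tr}(\mathbf{\Sigma}_G^k)$ or any other symmetric function of $\mathbf{\Sigma}_G$ decomposes as a sum over branch vertices of terms depending only on $d_w$ and on global parameters. The entries of $\mathbf{\Sigma}_G$ are built from the Green's function of the exclusion process on $\binom{G}{2}$, which mixes information across the whole tree; indeed the paper's own heuristic, and the nonvanishing off-diagonal entry found numerically for the second tree of Figure \ref{samedegseq}, show that the off-diagonal entries depend on how the essential vertices are distributed in the tree and not merely on their degrees. You candidly flag this local-to-global decomposition as where ``the bulk of the work'' lies, and I agree --- but that decomposition is essentially the content of the conjecture itself, so as written the proposal is a research plan with its central step missing rather than a proof.
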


 To finish this section, we give a heuristic justification for why Conjecture \ref{mainconj} is natural, as well as why it is significant from the perspective of graph configuration spaces.

Theorem \ref{homologyBasis} can be used (see \cite{R}, for instance) to show that the homotopy type of $\F_2(G)$ is determined entirely by the degree sequence of $G$. In fact, it is determined by certain combinatorial data, which is a considerably weaker numerical invariant of $G$ than its degree sequence.  In particular, homotopy theoretic invariants of $\F_2(G)$ cannot recover $G$ from $\F_2(G)$. To see why this is the case, recall that the discrete Morse theoretic approach of \cite{FS,Fa} implies that $H_1(\F_2(G))$ has a basis in bijection with certain star moves on the tree. In particular, the homology of $\F_2(G)$ can, at most, see the number of vertices of degree $\geq 3$, as well as the degrees of these vertices. On the other hand, the exclusion process being performed on $G$ imposes a kind of motion to these homology classes: if two adjacent branching vertices contain an imbalanced number of vertices on either side of them , then you expect the star moves being performed at either to have some correlation. In particular, the matrix $\Sigma_G$ should, in principal, contain the information of both the degree sequence of $G$, as well as global information about how vertices of degree at least 3 are distributed in the tree. From these two pieces of information, one should be able to recover the tree itself.

\begin{figure}
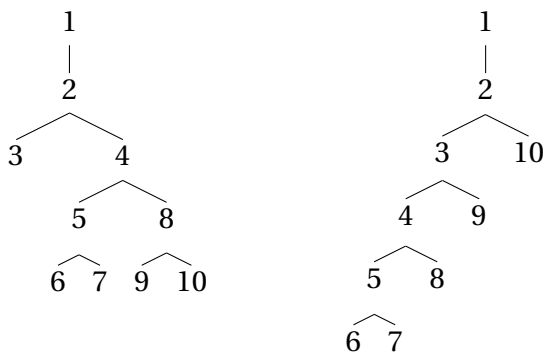

\Tree[.1 [.2 [.3 ] [.4 [.5 [.6 ] [.7 ]] [.8 [ .9 ] [.10 ] ] ]]] \Tree[.1 [.2 [.3 [.4 [.5 [.6 ] [.7 ]] [.8 ]] [.9 ]] [.10 ]]]

\caption{The smallest example of two non-isomorphic trees with the same degree sequence and no vertices of degree 2.}\label{samedegseq}
\end{figure}

\subsection{Winding: examples} \label{windEx}

In this section, we take the time to display some worked examples of winding. To begin, we compute the covariance matrix of the exclusion process winding of two particles traversing the \emph{star graph with $l$-leaves}. This is the tree with $l$ vertices of degree 1, called the \emph{leaves}, and one vertex of degree $l$, called the \emph{center}, and we denote it by $G_l$. Following this abstract computation, we complete the argument that the winding covariance matrices are distinct for the two particle walks on the planar leaf-rooted trees of Figure \ref{samedegseq}, up to reliance on numerical data.

While our ultimate goal is to compute the winding covariance matrix of the two particle exclusion process on the star graph, we will begin by bounding the entries of this matrix via the spectrum of $P_{G_l}$. Note that, in so far as winding is concerned, the compliment of the collapsible 1-cells of $\binom{G_l}{2}$ are precisely the critical 1-cells. In particular, winding in this case is literally the accumulated homology of the simple random walk on $\binom{G_l}{2}$.

To begin our computation, note that the vertices of the associated graph $\binom{G_l}{2}$ can be partitioned into two types: Those corresponding to configurations where one particle is in the center, and those corresponding to configurations where both particles are on leaves. In particular, it will be useful going forward to identify the vertices of $\binom{G_l}{2}$ with the leaves of $G_l$ that are being occupied in the associated configuration. If we organize our basis of $\R V(\binom{G_l}{2})$ by listing the configurations with a single occupied leaf first, followed by the those with two leaves, the matrix of $P_{G_l}$ assumes the block form,
\[
 P_{G_l} = \begin{pmatrix}
\frac{l+1}{2l} I_{l} & A\\
A^\dagger & \frac{l-1}{l} I_{\binom{l}{2}}
\end{pmatrix},
\]
where $A$ is the $(l \times \binom{l}{2})$-matrix given by
\[
A(i, \{j,k\}) = \begin{cases} 0 &\text{ if $i \notin \{j,k\}$}\\ \frac{1}{2l} &\text{ otherwise.}\end{cases}
\]
Our goal is to therefore compute the determinant of the matrix
\begin{align}
P_{G_l} - \lambda I_{l + \binom{l}{2}} = \begin{pmatrix}
(\frac{l+1}{2l} - \lambda) I_{l} & A\\
A^\dagger & (\frac{l-1}{l} - \lambda) I_{\binom{l}{2}}\end{pmatrix}\label{blockMatrix}
\end{align}
We begin with the following standard facts from linear algebra.

\begin{lemma}\label{blockdet}
Let $M$ be an $(n+m) \times (n+m)$ matrix, written in block form as
\[
M = \begin{pmatrix} A & B\\ C & D \end{pmatrix},
\]
where $A$ is $n \times n$, $B$ is $n \times m$, $C$ is $m \times n$, and $D$ is $m \times m$. Then, if $D$ is invertible, one has
\[
\det(M) = \det(A - BD^{-1}C)\det(D)
\]
\end{lemma}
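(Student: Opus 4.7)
The plan is to prove the block-determinant formula by an explicit factorization of $M$ into a product of three block matrices, two of which are block-triangular with identity blocks on the diagonal (hence have determinant $1$), and one of which is block-diagonal (whose determinant is immediate).

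Concretely, I would verify by direct multiplication that
\[
  \begin{pmatrix} A & B \\ C & D \end{pmatrix}
  = \begin{pmatrix} I_n & BD^{-1} \\ 0 & I_m \end{pmatrix}
    \begin{pmatrix} A - BD^{-1}C & 0 \\ 0 & D \end{pmatrix}
    \begin{pmatrix} I_n & 0 \\ D^{-1}C & I_m \end{pmatrix},
\]
which uses only the hypothesis that $D$ is invertible. Taking determinants and using multiplicativity, together with the elementary facts that a block-triangular matrix with identity diagonal blocks has determinant $1$ and that a block-diagonal matrix has determinant equal to the product of the determinants of its diagonal blocks, immediately yields
\[
  \det(M) = 1 \cdot \det(A - BD^{-1}C)\det(D) \cdot 1,
\]
which is the claimed identity.

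There is essentially no obstacle: the only nontrivial step is checking the factorization, which is a direct $2 \times 2$ block matrix multiplication, and the invertibility of $D$ is used exactly to make $BD^{-1}$ and $D^{-1}C$ well-defined. (Alternatively, one could give a proof via row/column operations on block matrices, which amounts to the same computation; I would use the factorization form above because it makes the role of the Schur complement $A - BD^{-1}C$ most transparent, and it is the form that will be applied in the sequel to the block decomposition of $P_{G_l} - \lambda I$ in (\ref{blockMatrix}).)
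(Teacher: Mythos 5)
Your factorization is correct (one checks directly that the product of the three block matrices recovers $M$, with $D$ invertible being exactly what makes $BD^{-1}$ and $D^{-1}C$ well-defined), and the resulting Schur-complement argument is the canonical proof of this identity. The paper itself states the lemma without proof as a standard fact from linear algebra, so there is nothing to compare against; your argument fills that gap correctly.
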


These are the two technical tools we need to complete our desired computation.

\begin{theorem}\label{starspec}
With notation as above, one has for all $l \geq 3$, the spectrum of $P_{G_l}$ is given by,
\begin{align}
1 &\text{ with multiplicity 1;}\\
1 - \frac{1}{2l} & \text{ with multiplicity $l-1$;}\\
1 - \frac{1}{l} & \text{ with multiplicity $\binom{l}{2} - l$;}\\
\frac{1}{2} &\text{ with multiplicity $l-1$;}\\
\frac{1}{2} - \frac{1}{2l} &\text{ with multiplicity 1.}
\end{align}
\end{theorem}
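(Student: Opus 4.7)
The plan is to read off the characteristic polynomial of $P_{G_l}$ directly from its block form (\ref{blockMatrix}) using Lemma \ref{blockdet}. Taking $D(\lambda) = (\tfrac{l-1}{l} - \lambda) I_{\binom{l}{2}}$, which is invertible away from $\lambda = \tfrac{l-1}{l}$, reduces $\det(P_{G_l} - \lambda I)$ to
\[
\left(\tfrac{l-1}{l} - \lambda\right)^{\binom{l}{2}} \det\!\left(\left(\tfrac{l+1}{2l} - \lambda\right) I_l - \frac{A A^\dagger}{\tfrac{l-1}{l}-\lambda}\right),
\]
so all that remains is to evaluate the $l \times l$ Schur complement.

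The key observation is that $AA^\dagger$ is itself of the two-constant form covered by Lemma \ref{twoconstantmatrix}. A direct count gives $(AA^\dagger)_{ii} = \tfrac{l-1}{4l^2}$, since leaf $i$ lies in $l-1$ unordered pairs, and $(AA^\dagger)_{ij} = \tfrac{1}{4l^2}$ for $i\ne j$, since exactly one pair contains both $i$ and $j$. Consequently the Schur complement has the form $\alpha(\lambda) I_l + \beta(\lambda)(J_l - I_l)$ for explicit rational functions $\alpha, \beta$ whose only pole is at $\lambda = \tfrac{l-1}{l}$, and Lemma \ref{twoconstantmatrix} factors its determinant as $(\alpha-\beta)^{l-1}\bigl(\alpha + (l-1)\beta\bigr)$.

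What remains is polynomial bookkeeping. Clearing the common denominator, the numerators of $\alpha - \beta$ and $\alpha + (l-1)\beta$ become explicit quadratics in $\lambda$. By direct substitution one verifies that the roots of the first are $\{\tfrac{1}{2},\, 1 - \tfrac{1}{2l}\}$ and the roots of the second are $\{1,\, \tfrac{1}{2} - \tfrac{1}{2l}\}$; the appearance of $\lambda = 1$ in the second is a useful sanity check, as $P_{G_l}$ is stochastic. Tracking exponents then shows that $(\alpha-\beta)^{l-1}$ contributes each of its roots with multiplicity $l-1$ and $\alpha + (l-1)\beta$ contributes each of its roots with multiplicity $1$, producing four of the five claimed eigenvalues. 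The fifth eigenvalue $\tfrac{l-1}{l} = 1 - \tfrac{1}{l}$ arises from the interaction between the factor $(\tfrac{l-1}{l} - \lambda)^{\binom{l}{2}}$ coming from $\det D(\lambda)$ and the poles that $(\alpha-\beta)^{l-1}\bigl(\alpha + (l-1)\beta\bigr)$ accumulates at $\lambda = \tfrac{l-1}{l}$.

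The only real obstacle is this final pole/zero bookkeeping: one must verify that $(\alpha-\beta)^{l-1}\bigl(\alpha + (l-1)\beta\bigr)$ has a pole of order exactly $l$ at $\lambda = \tfrac{l-1}{l}$, namely $l-1$ contributions from the first factor and $1$ from the second, so that cancellation against $(\tfrac{l-1}{l}-\lambda)^{\binom{l}{2}}$ leaves exponent $\binom{l}{2} - l$, which accounts for the multiplicity of $1 - \tfrac{1}{l}$. Everything else is routine quadratic factoring. A conceptually cleaner but heavier alternative would be to exploit $\Sn_l$-equivariance of $P_{G_l}$ and to decompose the state space under the symmetric-group action into isotypic pieces corresponding to the trivial, standard, and two-row $(l-2,2)$ representations; each of these restrictions is a small matrix whose eigenvalues can be read off directly. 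Since this would require developing representation-theoretic machinery the paper does not use, we prefer the direct Schur-complement route.
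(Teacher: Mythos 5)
Your proposal is correct and follows the same route as the paper: the block form \eqref{blockMatrix}, the Schur-complement reduction of Lemma \ref{blockdet}, the computation of $AA^\dagger$ as a two-constant matrix, and the factorization via Lemma \ref{twoconstantmatrix}. The only difference is that where the paper finishes by invoking computer algebra to extract the roots, you factor the two quadratics by hand and carry out the pole/zero bookkeeping at $\lambda = \tfrac{l-1}{l}$ explicitly (which checks out: each factor of $\alpha-\beta$ and $\alpha+(l-1)\beta$ contributes a simple pole there for $l \geq 3$, giving total pole order $l$ and hence the residual multiplicity $\binom{l}{2}-l$), so your write-up is, if anything, slightly more self-contained.
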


\begin{proof}
Using (\ref{blockMatrix}) and Lemma \ref{blockdet}, we have
\[
\det(P_{G_l} - \lambda I_{l + \binom{l}{2}}) = \det((\frac{l+1}{2l} - \lambda) I_{l} - \frac{1}{\frac{l-1}{l} - \lambda}AA^{\dagger})(\frac{l-1}{l} - \lambda)^{\binom{l}{2}}
\]

One easily computes that $A^\dagger A$ is the $l \times l$ matrix with entries given by
\[
(A^{\dagger}A)(i,j) = \begin{cases} \frac{1}{4l^2} & \text{ if $i \neq j$}\\ \frac{l-1}{4l^2} &\text{ otherwise.}\end{cases}
\]
In particular, $(\frac{l+1}{2l} - \lambda) I_{l} - \frac{1}{\frac{l-1}{l} - \lambda}AA^{\dagger}$ is the matrix whose diagonal terms are
\[
\frac{1}{4l^2(\frac{l-1}{l} - \lambda)}((4l^2(\frac{l-1}{l} - \lambda)(\frac{l+1}{2l} - \lambda) - (l-1)),
\]
and whose off diagonal terms are
\[
\frac{-1}{4l^2(\frac{l-1}{l} - \lambda)}.
\]
Applying Lemma \ref{twoconstantmatrix}, we conclude
\begin{align*}
\det(P_{G_l} - \lambda I_{l + \binom{l}{2}}) = \frac{((4l^2(\frac{l-1}{l} - \lambda)(\frac{l+1}{2l} - \lambda) + 2-l)^{l-1}(2-2l + (4l^2(\frac{l-1}{l} - \lambda)(\frac{l+1}{2l} - \lambda))(\frac{l-1}{l} - \lambda)^{\binom{l}{2} - l}}{(4l^2)^l}
\end{align*}

Computer algebra may then be used to find all roots of this polynomial, as desired.
\end{proof}

Combining Theorem \ref{starspec}, as well as Theorem \ref{windingbound}, we obtain the following.

\begin{theorem}
There exists a $\binom{l-1}{2} \times \binom{l-1}{2}$ matrix $\mathbf{\Sigma}$ such that,
\[
\lim_{t \to \infty} \frac{W(t)}{\sqrt{t}} \stackrel{\mathcal{D}}{\rightarrow} \Norm(0,\mathbf{\Sigma})
\]
where $\Norm(0,\mathbf{\Sigma})$ is the multivariate normal distribution with covariance matrix $\mathbf{\Sigma}$. The entries of $\mathbf{\Sigma}$ satisfy the following inequalities
\begin{align}
 \mathbf{\Sigma}(i,i) & \leq  \frac{l-1}{l(l+1)\binom{l+1}{2}}\\
-\frac{3l+1}{l(l+1)\binom{l+1}{2}}  \leq \mathbf{\Sigma}(i,j) & \leq \frac{2l-1}{l(l+1)\binom{l+1}{2}} 
\end{align}
\end{theorem}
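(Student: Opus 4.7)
The plan is to apply Theorem \ref{windingbound} to the two-particle exclusion process on $G_l$, and to reduce every ingredient of the bounds in that theorem to an explicit number using the spectral data supplied by Theorem \ref{starspec}. The convergence statement, the matrix size, and all four numerical inequalities will then follow.

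First I identify the critical 1-cells of $D\F_2(G_l)$. Take the root to be the leaf labeled $0$; the depth-first ordering places the center at label $1$ and the remaining leaves at labels $2,\ldots,l$. A critical 1-cell $\{\sigma_1,\sigma_2\}$ must have $\tau(\sigma_1)$ at a vertex of degree $\geq 3$, forcing $\sigma_1$ to be the edge from the center to some leaf $j\in\{2,\ldots,l\}$. Any non-root leaf $\sigma_2$ is automatically blocked because $\{\sigma_1,e(\sigma_2)\}$ always shares the center, while the root is blocked trivially, and non-order-respecting reduces to $\sigma_2<j$. The critical cells are therefore the pairs $(j,\sigma_2)$ with $2\leq \sigma_2<j\leq l$, giving $\binom{l-1}{2}$ of them; this fixes the matrix size.

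Next I collect the numerical data. Symmetry of the exclusion process forces $\pi$ to be uniform on $\binom{G_l}{2}$ with value $1/\binom{l+1}{2}$; each admissible transition probability equals $1/(2l)$, so $\pi_E(e_i)=1/(2l\binom{l+1}{2})$ and $\|f_i\|_\pi^2 = 1/(2l^2\binom{l+1}{2})$. Removing the center from $G_l$ leaves $l$ singleton components, so $(\# e_i)=1$ for every critical edge. From Theorem \ref{starspec} one reads off $\delta_{X^G_t,P_G}=1/(2l)$ and $1-\gamma_{\min}=(l+1)/(2l)$. Substituting all of this into the diagonal bound of Theorem \ref{windingbound} and simplifying yields $\mathbf{\Sigma}(i,i)\leq (l-1)/(l(l+1)\binom{l+1}{2})$.

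Finally, the off-diagonal inequalities come from a case analysis. Each critical cell $(j,v)$ is the oriented edge $\{1,v\}\to\{j,v\}$ in $\binom{G_l}{2}$, and two such edges share a vertex precisely when they share the parameter $v$; in that case their common vertex is the common tail $\{1,v\}$. A direct computation then gives $\|f_i+f_j\|_\pi^2 = 3/(2l^2\binom{l+1}{2})$ in the adjacent case and $\|f_i+f_j\|_\pi^2 = 1/(l^2\binom{l+1}{2})$ in the non-adjacent case. Applying Theorem \ref{windingbound}'s variance bound to $\sigma^2(\epsilon_i+\epsilon_j) = \mathbf{\Sigma}(i,i)+\mathbf{\Sigma}(j,j)+2\mathbf{\Sigma}(i,j)$, and then isolating $\mathbf{\Sigma}(i,j)$ using the diagonal bounds already derived, produces the claimed interval, with the adjacent case supplying the tight extremes. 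The main subtlety is keeping the orientation bookkeeping correct at the shared vertex $\{1,v\}$, where both $f_i$ and $f_j$ take their positive value so that the cross term in $\|f_i+f_j\|_\pi^2$ is added rather than subtracted; once that sign is right, everything else is arithmetic.
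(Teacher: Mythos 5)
Your proposal follows the paper's proof exactly: the paper's argument is precisely to combine Theorem \ref{starspec} with Theorem \ref{windingbound}, and your explicit identification of the $\binom{l-1}{2}$ critical cells, the uniform stationary distribution, $\pi_E(e_i)=1/(2l\binom{l+1}{2})$, the spectral data $\delta=1/(2l)$, $1-\gamma_{\min}=(l+1)/(2l)$, and the adjacent-case norm $\|f_i+f_j\|_\pi^2=3/(2l^2\binom{l+1}{2})$ reproduces all of the stated constants. One small caveat (present in the paper's own statement as well): for \emph{non-adjacent} critical edges the general machinery yields the slightly weaker upper bound $2l/\bigl(l(l+1)\binom{l+1}{2}\bigr)$ rather than $(2l-1)/\bigl(l(l+1)\binom{l+1}{2}\bigr)$, so your remark that the adjacent case supplies the tight extremes on both sides is only accurate for the lower bound.
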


Having completed our bounds, we next turn our attention to more explicitly computing the covariance via computing the discrete Green's function $(I-P_{G_l})^{-1}$. Our approach here follows the general computational approach of Chung and Yau \ycite{CY}, which expresses the entries of the Green's function as sums of  hitting times. We also make use of the algebraic tools of~\ocite{RW2}. These tools essentially derive from the fact that there is an action of the symmetric group $S_l$ on the leaves of $G_l$.

\begin{theorem}\label{hittingtimes}
For each pair $x,y \in V(\binom{G_l}{2})$, write $Q(x,y)$ for the expected hitting time of the exclusion process between $x$ and $y$. Then,
\begin{align}
&Q(\{1,2\},1) =l^2+2l,\hspace{.25cm}  Q(\{2,3\},1) = 2l^2 +3l,\hspace{.25cm}  Q(2,1) = 2l^2+2l,\\ \label{roof1}
&Q(1,\{1,2\}) = \frac{l^3+l^2-2l}{2},\hspace{.25cm} Q(3,\{1,2\}) = \frac{l^3+3l^2}{2},\\
&Q(\{1,3\},\{1,2\}) = \frac{l^3+2l^2+l}{2},\hspace{.25cm} Q(\{3,4\},\{1,2\}) = \frac{l^3+3l^2+2l}{2}
\end{align}
\end{theorem}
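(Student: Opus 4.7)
The plan is to exploit the large symmetry of $G_l$ to reduce the computation of each hitting time to a small linear system, which we then solve by standard first-step analysis on the exclusion process. Recall that a vertex of $\binom{G_l}{2}$ is either a leaf $i$ (one particle at the center, one at leaf $i$) or a pair $\{i,j\}$ (both particles on leaves). From the transition rules one obtains directly
\begin{align*}
  P_{G_l}(i,i) &= \tfrac{l+1}{2l}, & P_{G_l}(i, \{i,j\}) &= \tfrac{1}{2l} \text{ for } j \neq i,\\
  P_{G_l}(\{i,j\}, \{i,j\}) &= \tfrac{l-1}{l}, & P_{G_l}(\{i,j\}, i) = P_{G_l}(\{i,j\}, j) &= \tfrac{1}{2l},
\end{align*}
with all other transitions being zero.

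For the hitting times to state $1$, the stabilizer $S_{l-1}$ of $1$ acts on the remaining leaves and partitions the non-target states into three orbits, with representatives $2$, $\{1,2\}$, and $\{2,3\}$. Writing $b = Q(2,1)$, $c = Q(\{1,2\},1)$, $d = Q(\{2,3\},1)$, first-step analysis gives three equations: from $\{1,2\}$, the identity $\tfrac{1}{l} c = 1 + \tfrac{1}{2l} b$; from $\{2,3\}$, the identity $\tfrac{1}{l} d = 1 + \tfrac{1}{l} b$; and from $2$, the identity $\tfrac{l-1}{2l} b = 1 + \tfrac{1}{2l} c + \tfrac{l-2}{2l} d$, where the last uses that among the neighbors $\{\{2,j\} : j \neq 2\}$ of $2$, exactly one is of type $\{1,2\}$ and $l-2$ are of type $\{2,3\}$. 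Eliminating $c$ and $d$ from the third equation yields $b = 2l^2 + 2l$, and then $c$ and $d$ follow from the first two identities, giving the three values in the first line of the theorem.

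For the hitting times to state $\{1,2\}$, the stabilizer $S_2 \times S_{l-2}$ produces four orbits of non-target states, with representatives $1$, $3$, $\{1,3\}$, $\{3,4\}$. Writing $b' = Q(1,\{1,2\})$, $c' = Q(3,\{1,2\})$, $d' = Q(\{1,3\},\{1,2\})$, $e' = Q(\{3,4\},\{1,2\})$, first-step analysis produces the four equations
\begin{align*}
  (l-1)b' &= 2l + (l-2) d', & 2d' &= 2l + b' + c',\\
  (l-1) c' &= 2l + 2 d' + (l-3) e', & e' &= l + c',
\end{align*}
where, as before, each coefficient records how the neighbors of the current state distribute among the orbits. (The assumption $l \geq 4$ is implicit in the appearance of $\{3,4\}$; the case $l = 3$, when $e'$ is absent, is solved analogously with one fewer equation.) Substituting $e' = l + c'$ reduces the third equation to $2c' = l^2 - l + 2d'$; eliminating $b'$ between the first and second gives $l d' = 2l^2 + (l-1) c'$; substituting the latter back yields $2c'/l = l^2 + 3l$, so $c' = (l^3+3l^2)/2$. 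Back-substitution then produces $d'$, $e'$, and $b'$, matching the remaining four identities of the theorem.

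The only substantive obstacle is bookkeeping: identifying the correct orbits of the stabilizer of the target under the $S_l$-action on leaves, and correctly counting how many neighbors of each representative lie in each orbit. Once this is done, all systems are triangular enough to solve by hand, and the rational expressions in $l$ that emerge are exactly the ones stated -- which, as advertised in the surrounding text, reflects the $\mathrm{FI}$-algebraic framework of \ocite{RW2}.
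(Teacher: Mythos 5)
Your proposal is correct and takes essentially the same approach as the paper: the paper likewise reduces to orbit representatives under the symmetric group action and solves the resulting first-step-analysis system $(I-\widehat{P})Q = \mathbf{1}$ (your three equations for hitting times to $1$ are exactly the rows of the paper's $3\times 3$ matrix $\widehat{P}$). The only difference is that the paper solves the systems by computer algebra and explicitly writes out only the first of the two, whereas you solve both by hand; your orbit counts and the resulting values all check out.
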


\begin{proof}
We begin by noting that the aforementioned seven computations determine all hitting times on $\binom{G_l}{2}$ by application of the symmetric group action. 

We will illustrate the method to compute the hitting times (\ref{roof1}). Let $\widehat{P}$ denote the matrix
\[
\widehat{P} =
\begin{pmatrix}
 \frac{l-1}{l}& 0 & \frac{1}{2l}\\
 0 & \frac{l-1}{l} & \frac{1}{l}\\
 \frac{1}{2l}& \frac{l-2}{2l} & \frac{l+1}{2l}
 \end{pmatrix}
 \]
 In the language of \cite{RW2}, this is a principal minor of the transition matrix of the walk associated to the 1-roofed orbits of $G_l$. In particular, the vector $Q = \begin{pmatrix} Q(\{1,2\},1)\\ Q(\{2,3\},1)\\ Q(2,1)\end{pmatrix}$ is the unique solution to the matrix equation
 \[
 (I-\widehat{P})Q = \begin{pmatrix} 1\\1\\1 \end{pmatrix}
 \]
 This can be solved in any computer algebra system to obtain the desired results.
 \end{proof}
 
 Having computed these hitting times, we may now complete our description of the discrete Green's function associated to the two particle exclusion process on $G_l$.
 
 \begin{theorem}
Write for any pair of vertices $x,y \in V(\binom{G_l}{2})$, write
$\mathcal{G}(x,y) := (I-P_{G_l})^{-1}(x,y)$.
Let
\begin{align*}
  T_1(\ell) & = \frac{2 (\ell - 1) (2 \ell + 1) (\ell + 2)}{\ell (\ell
              + 1)^2} \\
  T_2(\ell) & = \frac{\ell^4 + 4\ell^3 -  \ell^2 - 12\ell    + 12}{\ell
              (\ell +  1)^2}
\end{align*}
Then,
\begin{align*}
  \G(1,1) & = T_1(\ell)
  &
   \G(\{1,2\},\{1,2\}) & = T_2(\ell) \\
  \G(\{1,2\},1) & =T_1(\ell)   - \frac{\ell^2+2\ell}{\binom{\ell+1}{2}}
  &
  \G(\{2,3\},1) & = T_1(\ell) - \frac{2\ell^2
                 +3\ell}{\binom{\ell+1}{2}}\\
  \G(2,1) & = T_1(\ell) -\frac{2\ell^2+2\ell}{\binom{\ell+1}{2}}
  &
 \G(1,\{1,2\}) & = T_2(\ell) -
                 \frac{\ell^3+\ell^2-2\ell}{2\binom{\ell+1}{2}} \\
  \G(3,\{1,2\}) & =T_2(\ell) - \frac{\ell^3+3\ell^2}{2\binom{\ell+1}{2}}
          &
   \G(\{1,3\},\{1,2\}) & = T_2(\ell) - \frac{\ell^3+2\ell^2+l}{2\binom{\ell+1}{2}}\\
\end{align*}

\end{theorem}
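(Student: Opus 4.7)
The plan is to derive the Green's function values from the already-computed hitting times of Theorem \ref{hittingtimes} via the standard identity that, for a reversible chain and $(I-P)^{-1}$ normalized as in the paper's remark,
\[
\mathcal{G}(y,y) - \mathcal{G}(x,y) = \pi(y)\, Q(x,y).
\]
The first observation (which is what lets the computation collapse) is that $P_{G_l}$ is \emph{doubly stochastic}: on the single-leaf configurations the $l-1$ outgoing movement probabilities equal $\frac{1}{2l}$ and match the $l-1$ incoming movement probabilities from double-leaf configurations, and similarly for double-leaf states. Hence $\pi$ is uniform, $\pi(x) = 1/\binom{l+1}{2}$ for every $x \in V\bigl(\binom{G_l}{2}\bigr)$. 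This immediately converts the factor $\pi(y)$ in the identity above into the $1/\binom{l+1}{2}$ that appears in all six off-diagonal formulas.

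With $\pi$ in hand, the off-diagonal values are essentially automatic: for each of the six off-diagonal formulas, the representative hitting time is one of the seven numbers from Theorem \ref{hittingtimes}, and the $S_\ell$-symmetry (orbits of the star automorphism group on $\binom{G_l}{2}$) shows these are the only cases up to relabeling of leaves. So the second step is simply a table look-up: read off $Q(x,y)$ from Theorem \ref{hittingtimes}, multiply by $\pi(y) = 1/\binom{l+1}{2}$, and subtract from the relevant diagonal term.

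For the diagonal terms $T_1(\ell) = \mathcal{G}(1,1)$ and $T_2(\ell) = \mathcal{G}(\{1,2\},\{1,2\})$, I would use the orthogonality condition $\sum_x \pi(x)\mathcal{G}(x,y) = 0$ (which is exactly the content of the pseudo-inverse equation $(I-P)^{-1}(I-P) = I - P_0$ paired with the fact that $\pi(x) = 1/\binom{l+1}{2}$). Summing the identity above against $\pi(x)$ and using $Q(y,y) = 0$ (the convention compatible with this identity) gives
\[
\mathcal{G}(y,y) = \pi(y)\sum_{x} \pi(x)\,Q(x,y) = \frac{1}{\binom{l+1}{2}^2}\sum_{x \neq y} Q(x,y).
\]
To evaluate this sum, one partitions the vertices of $\binom{G_l}{2}$ into $S_\ell$-orbits relative to the fixed vertex $y$, of which there are exactly three when $y$ is a single-leaf state (single-leaf states $\neq y$, double-leaf states containing $y$'s leaf, and double-leaf states disjoint from $y$'s leaf) and four when $y$ is a double-leaf state. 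Each orbit contributes a known orbit size times one of the seven hitting times from Theorem \ref{hittingtimes}, reducing the entire computation of $T_1(\ell)$ and $T_2(\ell)$ to polynomial arithmetic.

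The only real obstacle is the algebraic simplification: after collecting the contributions $2\ell(\ell-1)(\ell+1) + \ell(\ell-1)(\ell+2) + \binom{\ell-1}{2}\ell(2\ell+3)$ (for $T_1$) and the analogous four-orbit sum for $T_2$, one must verify that the result factors to match the displayed forms $\frac{2(\ell-1)(2\ell+1)(\ell+2)}{\ell(\ell+1)^2}$ and $\frac{\ell^4+4\ell^3-\ell^2-12\ell+12}{\ell(\ell+1)^2}$. For $T_1$ one can factor out $\ell(\ell-1)/2$ and recognize $(2\ell+1)(\ell+2)$; for $T_2$ the polynomial in the numerator is irreducible over $\mathbb{Q}$, so the verification is a direct identity check. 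This is tedious but mechanical, and is the only step where a computer algebra system is genuinely useful.
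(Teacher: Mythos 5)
Your method is exactly the paper's: the authors likewise obtain the entries of $\mathcal{G}$ from the hitting times of Theorem \ref{hittingtimes} via the Chung--Yau identities $\mathcal{G}(x,x)=\pi(x)^2\sum_z Q(z,x)$ and $\mathcal{G}(x,y)=\mathcal{G}(y,y)-\pi(x)Q(x,y)$, together with the $S_\ell$-action to reduce to finitely many orbit representatives. Your observations that $P_{G_\ell}$ is symmetric, hence $\pi\equiv 1/\binom{\ell+1}{2}$, and your explicit orbit decomposition (three orbits relative to a single-leaf state, four relative to a double-leaf state) are correct and simply fill in details the paper leaves implicit.

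The one genuine problem is in the step you describe as ``tedious but mechanical'': for $T_2$ the identity check fails. Your four-orbit sum for $y=\{1,2\}$, with orbit sizes $2$, $\ell-2$, $2(\ell-2)$, $\binom{\ell-2}{2}$ paired with the four hitting times $Q(\cdot,\{1,2\})$ of Theorem \ref{hittingtimes}, evaluates to $\frac{\ell^4+4\ell^3-\ell^2-12\ell-4}{\ell(\ell+1)^2}$, whose constant term is $-4$ rather than the $+12$ displayed in the statement. A sanity check at $\ell=3$ confirms this: there $\binom{G_3}{2}$ is a $6$-cycle on which the exclusion process is a lazy simple random walk, so vertex-transitivity forces $\mathcal{G}(1,1)=\mathcal{G}(\{1,2\},\{1,2\})$; indeed $T_1(3)=\frac{35}{12}$ agrees with $\frac{1}{36}\sum_x Q(x,y)=\frac{105}{36}$, but the displayed $T_2(3)=\frac{156}{48}=\frac{13}{4}\neq\frac{35}{12}$, whereas the $-4$ version gives $\frac{140}{48}=\frac{35}{12}$. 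The inputs appear sound --- the seven hitting times pass the random-target consistency check $\sum_y\pi(y)Q(x,y)=\mathrm{const}$ --- so the discrepancy points to an arithmetic slip in the printed $T_2$ (and hence in the four off-diagonal entries expressed in terms of it) rather than in your setup. As written, though, your proof asserts that the final verification succeeds when it does not, so you must either correct the target formula or exhibit an error in the orbit sum before the argument closes.
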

\begin{proof}
Once again we note that these nine computations determine the discrete Green's function at every pair of vertices because of the symmetric group action.\\

Finally, the above computations follow from Theorem
\ref{hittingtimes}, as well as the formulas of \ocite{CY}
\begin{align*}
\mathcal{G}(x,x) = \pi(x)^2\sum_z Q(z,x)\\
\mathcal{G}(x,y) = \mathcal{G}(y,y)-\pi(x)Q(x,y)
\end{align*}
\end{proof}

To conclude our computations, we prove that the covariance matrices associated to the planar leaf-rooted trees of Figure \ref{samedegseq} can distinguish these trees. For what follows, we write $G_1$ to denote the left-most planar leaf-rooted tree in Figure \ref{samedegseq}, and $G_2$ to denote the right-most tree. Our first result essentially says that the tree $G_1$ is too symmetric for there to be non-trivial correlation between distinct star-moves.

\begin{proposition}
The Covariance matrix $\mathbf{\Sigma}_{G_1}$ is a diagonal matrix.
\end{proposition}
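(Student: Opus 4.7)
The plan is to exploit the large automorphism group of $G_1$ as an abstract graph. Although $G_1$ carries a planar embedding and a root (needed to define the critical-cell basis of $H_1(\F_2(G_1))$), the exclusion process $P_{G_1}$ depends only on the underlying unlabeled tree. Consequently the automorphism group $\Aut(G_1)\cong(\Z/2)^3$, generated by the leaf swaps $\alpha=(6\ 7)$ and $\beta=(9\ 10)$ together with the big swap $\gamma$ exchanging the subtree rooted at $5$ with that rooted at $8$, induces a group of permutations of $V(\binom{G_1}{2})$ which commute with $P_{G_1}$ and preserve the stationary distribution. Using the closed-form expression $\mathbf{\Sigma}_{G_1}(i,j)=2\langle(I-P_{G_1})^{-1}\widetilde{f_i},\widetilde{f_j}\rangle$ from the proof of Theorem~\ref{windingbound}, the covariance form is therefore $\Aut(G_1)$-invariant: one has $\mathbf{\Sigma}_{G_1}(\sigma\cdot e_i,\sigma\cdot e_j)=\mathbf{\Sigma}_{G_1}(e_i,e_j)$ for every $\sigma\in\Aut(G_1)$.

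The standard sign-reversal trick then reduces diagonality to exhibiting, for each $i\ne j$, an element $\sigma\in\Aut(G_1)$ satisfying $\sigma\cdot e_i=e_i$ and $\sigma\cdot e_j=-e_j$. To apply this, I would first enumerate the critical $1$-cells using Definition~\ref{criticalCells}. Since every branching vertex of $G_1$ (namely $2, 4, 5, 8$) has exactly two children, there are exactly four critical cells
\[
  e_1=\{(2,4),3\},\quad e_2=\{(4,8),5\},\quad e_3=\{(5,7),6\},\quad e_4=\{(8,10),9\},
\]
so $H_1(\F_2(G_1))\cong\Z^4$. Next, I would compute the $\Aut(G_1)$-action on this basis by applying each generator to the closure cycle of a critical cell (the critical edge concatenated with the spanning-tree path of collapsible edges back to its source), and then re-expanding the image cycle in the critical basis via Theorem~\ref{howCellsWork}. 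The expected outcome is that $\alpha$ acts by $\mathrm{diag}(1,1,-1,1)$, $\beta$ acts by $\mathrm{diag}(1,1,1,-1)$, and $\gamma$ acts by $\mathrm{diag}(1,-1)$ on $e_1,e_2$ while permuting $e_3,e_4$ up to sign. Given these actions, $\mathbf{\Sigma}_{G_1}(1,2)$ is killed by $\gamma$; the entries $\mathbf{\Sigma}_{G_1}(1,3)$, $\mathbf{\Sigma}_{G_1}(2,3)$, $\mathbf{\Sigma}_{G_1}(3,4)$ are killed by $\alpha$; and $\mathbf{\Sigma}_{G_1}(1,4)$, $\mathbf{\Sigma}_{G_1}(2,4)$ are killed by $\beta$.

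The main obstacle is the sign bookkeeping in the second step. The sign contributed by $\sigma\cdot e_i$ is determined by whether the induced map preserves or reverses the orientation convention $\tau(\sigma_1)\to\iota(\sigma_1)$, but the spanning tree of collapsible edges is defined via the depth-first labeling, which need not be preserved by $\Aut(G_1)$. A collapsible edge can therefore be sent to a critical edge by $\sigma$, and one must re-decompose the $\sigma$-image cycle via Theorem~\ref{howCellsWork}. For example, verifying $\gamma\cdot e_2=-e_2$ requires tracing the $\gamma$-image of the spanning-tree closure of $e_2$ and checking that exactly one non-collapsible edge appears in the image, contributing $-e_2$. Once this bookkeeping is completed for the four basis vectors the proposition follows from the sign-reversal argument.
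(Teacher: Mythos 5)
Your proof is correct and follows essentially the same route as the paper's: both kill each off-diagonal entry $\mathbf{\Sigma}_{G_1}(i,j)$ by exhibiting an automorphism of $G_1$ that fixes the basis vector $e_i$ while negating $e_j$, then invoking invariance of the covariance form. The only differences are cosmetic --- for the pair of critical cells at vertices $2$ and $4$ you use the swap of the two subtrees hanging off vertex $4$ where the paper uses the leaf swap at vertex $2$ (which moves the root), and your explicit attention to the sign bookkeeping via Theorem \ref{howCellsWork} is more careful than the paper's one-line justification.
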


\begin{proof}
The matrix $\mathbf{\Sigma}_{G_1}$ is $4 \times 4$, with rows in columns indexed by the vertices labeled 2,4,5, and 8. If $i$ is any-such index, and $j \neq 4$ is another, we claim that $\mathbf{\Sigma}_{G_1}(i,j) = 0$. Indeed, this follows from the fact that there is an automorphism of $G_1$, e.g. the one which switches the two leaves connected to $j$, which negates the basis vector associated to $j$, while leaving the basis vector associated to $i$ fixed.
\end{proof}

On the other hand, this symmetry is non-existent in the tree $G_2$. Labeling the rows and columns of $\mathbf{\Sigma}_{G_2}$ by the vertices $2,3,4,$ and $5$, numerical simulation has shown that, with high probability, one has $\mathbf{\Sigma}_{G_2}(3,4) \neq 0$.

\section*{Acknowledgements}
The authors would like to send thanks to Aaron Abrams, Gabor Lippner, 
Nick Proudfoot, and Graham White for useful conversations. Some of the
ideas in this paper, especially the main conjecture \ref{mainconj},
were discussed at the American Institute of Mathematics' workshop on
graph configuration spaces. We thank AIM for providing these
accommodations.

%\printbibliography

%\bibliography{gprob}

\begin{bibdiv}
\begin{biblist}

\bib{A}{book}{
      author={Abrams, Aaron~David},
       title={Configuration spaces and braid groups of graphs},
   publisher={ProQuest LLC, Ann Arbor, MI},
        date={2000},
        ISBN={978-0599-85818-3},
         url={home.wlu.edu/~abramsa/publications/thesis.ps},
        note={Thesis (Ph.D.)--University of California, Berkeley},
}

\bib{ADK}{article}{
      author={An, Byung~Hee},
      author={Drummond-Cole, Gabriel~C.},
      author={Knudsen, Ben},
       title={Subdivisional spaces and graph braid groups},
        date={2019},
        ISSN={1431-0635},
     journal={Doc. Math.},
      volume={24},
       pages={1513\ndash 1583},
}

\bib{Arn}{article}{
      author={Arnold, V.~I.},
       title={The cohomology ring of the group of dyed braids},
        date={1969},
        ISSN={0025-567X},
     journal={Mat. Zametki},
      volume={5},
       pages={227\ndash 231},
}

\bib{Bel}{article}{
      author={B\'{e}lisle, Claude},
       title={Windings of random walks},
        date={1989},
        ISSN={0091-1798},
     journal={Ann. Probab.},
      volume={17},
      number={4},
       pages={1377\ndash 1402},
      review={\MR{1048932}},
}

\bib{BF}{article}{
      author={B\'{e}lisle, Claude},
      author={Faraway, Julian},
       title={Winding angle and maximum winding angle of the two-dimensional
  random walk},
        date={1991},
        ISSN={0021-9002},
     journal={J. Appl. Probab.},
      volume={28},
      number={4},
       pages={717\ndash 726},
      review={\MR{1133781}},
}

\bib{Bill}{book}{
      author={Billingsley, Patrick},
       title={Convergence of probability measures},
   publisher={John Wiley \& Sons, Inc., New York-London-Sydney},
        date={1968},
}

\bib{Bolt}{article}{
      author={Bolthausen, E.},
       title={The {B}erry-{E}sse\'{e}n theorem for strongly mixing {H}arris
  recurrent {M}arkov chains},
        date={1982},
        ISSN={0044-3719},
     journal={Z. Wahrsch. Verw. Gebiete},
      volume={60},
      number={3},
       pages={283\ndash 289},
      review={\MR{664418}},
}

\bib{CLR}{article}{
      author={Caputo, Pietro},
      author={Liggett, Thomas~M.},
      author={Richthammer, Thomas},
       title={Proof of {A}ldous' spectral gap conjecture},
        date={2010},
        ISSN={0894-0347},
     journal={J. Amer. Math. Soc.},
      volume={23},
      number={3},
       pages={831\ndash 851},
}

\bib{CL}{article}{
      author={Chettih, Safia},
      author={L\"{u}tgehetmann, Daniel},
       title={The homology of configuration spaces of trees with loops},
        date={2018},
        ISSN={1472-2747},
     journal={Algebr. Geom. Topol.},
      volume={18},
      number={4},
       pages={2443\ndash 2469},
      review={\MR{3797072}},
}

\bib{CY}{incollection}{
      author={Chung, Fan},
      author={Yau, S.-T.},
       title={Discrete {G}reen's functions},
        date={2000},
      volume={91},
       pages={191\ndash 214},
         url={https://doi.org/10.1006/jcta.2000.3094},
        note={In memory of Gian-Carlo Rota},
}

\bib{Dur}{article}{
      author={Durrett, Richard},
       title={A new proof of {S}pitzer's result on the winding of
  two-dimensional {B}rownian motion},
        date={1982},
        ISSN={0091-1798},
     journal={Ann. Probab.},
      volume={10},
      number={1},
       pages={244\ndash 246},
      review={\MR{637391}},
}

\bib{Fa}{incollection}{
      author={Farley, Daniel},
       title={Homology of tree braid groups},
        date={2006},
   booktitle={Topological and asymptotic aspects of group theory},
      series={Contemp. Math.},
      volume={394},
   publisher={Amer. Math. Soc., Providence, RI},
       pages={101\ndash 112},
      review={\MR{2216709}},
}

\bib{FS}{article}{
      author={Farley, Daniel},
      author={Sabalka, Lucas},
       title={Discrete {M}orse theory and graph braid groups},
        date={2005},
        ISSN={1472-2747},
     journal={Algebr. Geom. Topol.},
      volume={5},
       pages={1075\ndash 1109},
}

\bib{Fo}{article}{
      author={Forman, Robin},
       title={Morse theory for cell complexes},
        date={1998},
        ISSN={0001-8708},
     journal={Adv. Math.},
      volume={134},
      number={1},
       pages={90\ndash 145},
      review={\MR{1612391}},
}

\bib{Fo2}{article}{
      author={Forman, Robin},
       title={A user's guide to discrete {M}orse theory},
        date={2002},
     journal={S\'{e}m. Lothar. Combin.},
      volume={48},
       pages={Art. B48c, 35},
}

\bib{G}{incollection}{
      author={Ghrist, Robert},
       title={Configuration spaces and braid groups on graphs in robotics},
        date={2001},
   booktitle={Knots, braids, and mapping class groups---papers dedicated to
  {J}oan {S}. {B}irman ({N}ew {Y}ork, 1998)},
      series={AMS/IP Stud. Adv. Math.},
      volume={24},
   publisher={Amer. Math. Soc., Providence, RI},
       pages={29\ndash 40},
}

\bib{GPV}{article}{
      author={Guo, M.~Z.},
      author={Papanicolaou, G.~C.},
      author={Varadhan, S. R.~S.},
       title={Nonlinear diffusion limit for a system with nearest neighbor
  interactions},
        date={1988},
        ISSN={0010-3616},
     journal={Comm. Math. Phys.},
      volume={118},
      number={1},
       pages={31\ndash 59},
      review={\MR{954674}},
}

\bib{Jones}{article}{
      author={Jones, Galin~L.},
       title={On the {M}arkov chain central limit theorem},
        date={2004},
     journal={Probab. Surv.},
      volume={1},
       pages={299\ndash 320},
}

\bib{KOV}{article}{
      author={Kipnis, C.},
      author={Olla, S.},
      author={Varadhan, S. R.~S.},
       title={Hydrodynamics and large deviation for simple exclusion
  processes},
        date={1989},
        ISSN={0010-3640},
     journal={Comm. Pure Appl. Math.},
      volume={42},
      number={2},
       pages={115\ndash 137},
      review={\MR{978701}},
}

\bib{KL}{book}{
      author={Kipnis, Claude},
      author={Landim, Claudio},
       title={Scaling limits of interacting particle systems},
      series={Grundlehren der Mathematischen Wissenschaften [Fundamental
  Principles of Mathematical Sciences]},
   publisher={Springer-Verlag, Berlin},
        date={1999},
      volume={320},
        ISBN={3-540-64913-1},
         url={https://doi-org.libproxy.uoregon.edu/10.1007/978-3-662-03752-2},
      review={\MR{1707314}},
}

\bib{knudson}{misc}{
      author={Knudsen, Ben},
       title={Configuration spaces in algebraic topology},
        date={2018},
}

\bib{Lacoin}{article}{
      author={Lacoin, Hubert},
       title={Mixing time and cutoff for the adjacent transposition shuffle and
  the simple exclusion},
        date={2016},
        ISSN={0091-1798},
     journal={Ann. Probab.},
      volume={44},
      number={2},
       pages={1426\ndash 1487},
         url={https://doi-org.libproxy.uoregon.edu/10.1214/15-AOP1004},
      review={\MR{3474475}},
}

\bib{LPW}{book}{
      author={Levin, David~A.},
      author={Peres, Yuval},
       title={Markov chains and mixing times},
   publisher={American Mathematical Society, Providence, RI},
        date={2017},
        ISBN={978-1-4704-2962-1},
        note={Second edition of [ MR2466937], With contributions by Elizabeth
  L. Wilmer, With a chapter on ``Coupling from the past'' by James G. Propp and
  David B. Wilson},
}

\bib{Lig1}{book}{
      author={Liggett, Thomas~M.},
       title={Stochastic interacting systems: contact, voter and exclusion
  processes},
      series={Grundlehren der Mathematischen Wissenschaften [Fundamental
  Principles of Mathematical Sciences]},
   publisher={Springer-Verlag, Berlin},
        date={1999},
      volume={324},
        ISBN={3-540-65995-1},
}

\bib{LM}{article}{
      author={Lyons, T.~J.},
      author={McKean, H.~P.},
       title={Winding of the plane {B}rownian motion},
        date={1984},
        ISSN={0001-8708},
     journal={Adv. in Math.},
      volume={51},
      number={3},
       pages={212\ndash 225},
      review={\MR{740582}},
}

\bib{MS}{article}{
      author={McKean, H.~P.},
      author={Sullivan, D.},
       title={Brownian motion and harmonic functions on the class surface of
  the thrice punctured sphere},
        date={1984},
        ISSN={0001-8708},
     journal={Adv. in Math.},
      volume={51},
      number={3},
       pages={203\ndash 211},
      review={\MR{740581}},
}

\bib{Morris}{article}{
      author={Morris, Ben},
       title={The mixing time for simple exclusion},
        date={2006},
        ISSN={1050-5164},
     journal={Ann. Appl. Probab.},
      volume={16},
      number={2},
       pages={615\ndash 635},
         url={https://doi-org.libproxy.uoregon.edu/10.1214/105051605000000728},
      review={\MR{2244427}},
}

\bib{PY}{article}{
      author={Pitman, J.~W.},
      author={Yor, M.},
       title={The asymptotic joint distribution of windings of planar
  {B}rownian motion},
        date={1984},
        ISSN={0273-0979},
     journal={Bull. Amer. Math. Soc. (N.S.)},
      volume={10},
      number={1},
       pages={109\ndash 111},
      review={\MR{722863}},
}

\bib{R}{article}{
      author={Ramos, Eric},
       title={Stability phenomena in the homology of tree braid groups},
        date={2018},
        ISSN={1472-2747},
     journal={Algebr. Geom. Topol.},
      volume={18},
      number={4},
       pages={2305\ndash 2337},
      review={\MR{3797068}},
}

\bib{RSW}{misc}{
      author={Ramos, Eric},
      author={Speyer, David},
      author={White, Graham},
       title={Fi-sets with relations},
        date={2018},
}

\bib{RW2}{misc}{
      author={Ramos, Eric},
      author={White, Graham},
       title={Families of markov chains with compatible symmetric-group
  actions},
        date={2018},
}

\bib{RW}{article}{
      author={Ramos, Eric},
      author={White, Graham},
       title={Families of nested graphs with compatible symmetric-group
  actions},
        date={2019},
        ISSN={1022-1824},
     journal={Selecta Math. (N.S.)},
      volume={25},
      number={5},
       pages={Art. 70, 42},
}

\bib{Rez}{article}{
      author={Rezakhanlou, Fraydoun},
       title={Hydrodynamic limit for attractive particle systems on {${\bf
  Z}^d$}},
        date={1991},
        ISSN={0010-3616},
     journal={Comm. Math. Phys.},
      volume={140},
      number={3},
       pages={417\ndash 448},
      review={\MR{1130693}},
}

\bib{Sp}{article}{
      author={Spitzer, Frank},
       title={Some theorems concerning {$2$}-dimensional {B}rownian motion},
        date={1958},
        ISSN={0002-9947},
     journal={Trans. Amer. Math. Soc.},
      volume={87},
       pages={187\ndash 197},
      review={\MR{104296}},
}

\bib{Sw}{article}{
      author={\'{S}wiatkowski, Jacek},
       title={Estimates for homological dimension of configuration spaces of
  graphs},
        date={2001},
        ISSN={0010-1354},
     journal={Colloq. Math.},
      volume={89},
      number={1},
       pages={69\ndash 79},
         url={https://doi-org.libproxy.uoregon.edu/10.4064/cm89-1-5},
      review={\MR{1853416}},
}

\bib{Wata}{incollection}{
      author={Watanabe, Shinzo},
       title={Asymptotic windings of {B}rownian motion paths on {R}iemann
  surfaces},
        date={2000},
      volume={63},
       pages={441\ndash 464},
         url={https://doi.org/10.1023/A:1010756726463},
        note={Recent developments in infinite-dimensional analysis and quantum
  probability},
      review={\MR{1834236}},
}

\bib{WT}{article}{
      author={Wen, Huanyu},
      author={Thiffeault, Jean-Luc},
       title={Winding of a brownian particle around a point vortex},
        date={2019},
        ISSN={1471-2962},
     journal={Philosophical Transactions of the Royal Society A: Mathematical,
  Physical and Engineering Sciences},
      volume={377},
      number={2158},
       pages={20180347},
         url={http://dx.doi.org/10.1098/rsta.2018.0347},
}

\bib{Wil}{article}{
      author={Wilson, David~Bruce},
       title={Mixing times of {L}ozenge tiling and card shuffling {M}arkov
  chains},
        date={2004},
        ISSN={1050-5164},
     journal={Ann. Appl. Probab.},
      volume={14},
      number={1},
       pages={274\ndash 325},
      review={\MR{2023023}},
}

\end{biblist}
\end{bibdiv}

\end{document}